%% file: A-MAIN-higherwente-general-n.tex
\documentclass[11pt,a4paper,leqno]{amsart}   

\usepackage[
  a4paper,
  hmarginratio=1:1, 
  bottom=3.5cm,
]{geometry}

\usepackage[utf8]{inputenc}
\usepackage[T1]{fontenc}
\usepackage{setspace}

\usepackage{amsmath}

\usepackage{empheq}
\usepackage[ english]{babel}
\usepackage[utf8]{inputenc}
\usepackage{lmodern}
\usepackage{csquotes}
\usepackage{tabularx}
\usepackage{mathbbol}
\usepackage{comment}
\usepackage{nccmath}
\usepackage{ragged2e}
\usepackage{nccmath}
\usepackage{tabularx} 

\usepackage{amsfonts}
\usepackage{mdframed}

\usepackage{framed} 
\usepackage{xcolor} 
\colorlet{shadecolor}{pink!20}

\usepackage{float}
\usepackage{latexsym}

\usepackage{amsmath, amssymb,amsthm}
\usepackage{mathtools}  
\usepackage{esint}
\allowdisplaybreaks 

\usepackage{tikz}
\usepackage{comment}
\usepackage{xcolor}
\usepackage{enumitem}
\usepackage{physics}

\usepackage{fancyhdr}
\usepackage{relsize}
\usepackage{lastpage}
\usepackage{hyperref} 

\newcommand{\lorentzl}{L^{(n,q)}} 
\newcommand{\lorentzw}{W^{1,(n,q)}} 
\newcommand{\lorentzone}{L^{(n,1)}} 
\newcommand{\lorentzwone}{W^{1,(n,1)}} 
\newcommand{\lorentzdual}{L^{(\frac{n}{n-1},\infty)}} 
\newcommand{\R}{\mathbb{R}} 
\newcommand{\N}{\mathbb{N}} 

\newcommand{\divv}{\text{div}}

\newcommand{\RNB}[1]{\uppercase\expandafter{\romannumeral#1}}

\usepackage{setspace}
\newtheorem{theorem}{Theorem}[section]

\newtheorem{definition}[theorem]{Definition}

\newtheorem{lemma}[theorem]{Lemma}
\newtheorem{remark}[theorem]{Remark}
\newtheorem{proposition}[theorem]{Proposition}

\newtheorem{corollary}[theorem]{Corollary}
\numberwithin{equation}{section}

\usepackage{microtype} 

\usepackage[
backend=biber,
style=numeric,
sorting=anyt
]{biblatex}
\newcommand{\ignore}[1]{}
\allowdisplaybreaks

\author{Carolin Bayer}
\thanks{Department of Mathematics, ETH Zentrum, CH-8093 Zürich, Switzerland. \newline The author is funded by the Swiss National Fund,
Project SNF $200020\_219429$.}
\address{Department of Mathematics, ETH Zentrum, CH-8093 Zürich, Switzerland}
\email{carolin.bayer@math.ethz.ch }

\begin{document}

\setlength{\jot}{0.5cm}

\title[ Rivière's $GL(m)$-Gauge Construction for Antisymmetric Potentials ]{ Regularity and Rivière's ${GL}(m)$-Gauge Construction for Elliptic Systems with Antisymmetric Potentials in Arbitrary Dimensions.}

\input{A-Project-almost-gauge-critical-case-paper-version}

\printbibliography

\end{document}

%% file: A-Project-almost-gauge-critical-case-paper-version.tex
\maketitle

\begin{abstract}
Let $1 \leq q \le 2$ and denote by $2 \leq q'$ its corresponding conjugate exponent. We prove the continuity of solutions $u \in W^{1,(\frac{n}{n-1},q')}(B^n, \mathbb{R}^m)$ to the critical elliptic system $-\Delta u = \Omega \cdot \nabla u$ in dimension $n \ge 3$, where the potential $\Omega \in L^{(n,q)}(B^n, \mathfrak{so}(m) \otimes \wedge^1)$ is antisymmetric. First, we construct $P \in W^{1,(n,q)}(B^n, \mathrm{SO}(m))$ such that the PDE can be rewritten as $-\operatorname{div}(P^{-1}du) = \ast d\xi \cdot P^{-1}du$, which is nearly a Jacobian structure up to the rotation $P$. Second, we provide a Rivière's $\mathrm{GL}(m)$-Gauge in order to establish a "full" $(A,B)$-conservation law, i.e.  $-\divv(Adu)=d^\ast B \cdot du$. We show that the assumption on $\Omega \in L^{(n,q)} (B^n, \mathfrak{so}(m) \otimes \wedge^1) $ for $q\leq 2$ is optimal.
\end{abstract}

\tableofcontents

\vspace{0.7cm}

\section{Introduction}

\textbf{Motivation of the problem.} The regularity question — whether weak solutions of the harmonic map equation are smooth in the conformally invariant case for $n=2$—  was settled by Hélein \cite{helein}, who used moving frames to handle the nonlinearity of the right-hand side. For maps into symmetric targets, this structure reduces to a Jacobian expression, and continuity follows from the Wente inequality \cite{wente}. More precisely, let $B^2 \subset \mathbb{R}^2$ and let $a, b \in W^{1,2}(B^2)$. Consider the system
\begin{equation}
\begin{cases}
\Delta \varphi = \nabla a \cdot \nabla^\perp b & \text{on } B^2, \\
\varphi = 0 & \text{on } \partial B^2. \label{eq:wente}
\end{cases}
\end{equation}
Wente discovered improved regularity properties for the solution $\varphi$ of \eqref{eq:wente} given by
\begin{equation}
\|\varphi\|_{L^\infty(B^2)} + \|\nabla \varphi\|_{L^2(B^2)} \le C \|\nabla a\|_{L^2(B^2)} \|\nabla b\|_{L^2(B^2)}.
\end{equation}

A generalization of these better integrability estimates goes back to the compensated compactness theory of Coifman, Lions, Meyer, and Semmes \cite{clms} for functions that lie in Hardy space $\mathcal{H}^1$ rather than merely $L^1$. They further showed that solutions of \eqref{eq:wente} satisfy
\begin{equation} \label{eq:clms-equation}
\|\nabla \varphi\|_{L^{2,1}(B^2)} + \|\nabla^2 \varphi\|_{L^1(B^2)} \le C \|\nabla a\|_{L^2(B^2)} \|\nabla b\|_{L^2(B^2)}.
\end{equation}
Bethuel showed a similar version \cite{bethuel}, namely
\begin{equation}
\|\nabla \varphi\|_{L^2(B^2)} \le C \|\nabla a\|_{L^{2,\infty}(B^2)} \|\nabla b\|_{L^2(B^2)}. \label{eq:bethuel}
\end{equation}

 The full geometric reach of this cancellation structure was discovered by Rivière \cite{conservation-law-conf-invariant} for $n=2$, who proved that any conformally invariant elliptic system of the form  $-\Delta u = \Omega \cdot \nabla u$ with antisymmetric potential $\Omega \in L^2$
 can be rewritten as a so called "$(A,B)$-conservation law" / "Rivière's $\mathrm{GL}(m)$-Gauge", yielding continuity without any assumption on the target.

\vspace{0.5cm}

The question naturally arises as to whether this structure provides improved regularity estimates, even in higher dimensions. Let $n\geq 3$. Rivière raised the question whether it remains true or not, that any solution \(u \in W^{1,n}(B^n, \mathbb{R}^m)\) of
\begin{equation} 
    -\operatorname{div}\!\left(|\nabla u|^{n-2}\nabla u\right) = |\nabla u|^{n-2}\,\Omega \cdot \nabla u, \label{eq:conjecture-tristan}
\end{equation}
where $\Omega\in L^n(B^n, \mathfrak{so}(m)\otimes \wedge^1 )$ is also continuous. Recently, Martino and Schikorra and independently Schlagenhauf provided a negative answer to this question by giving a counterexample \cite{answer-martino-schikorra}, \cite{answer-schlagenhauf}. It turns out that the natural extension of these results leads to the so-called Lorentz spaces, which were introduced in Section \ref{sec:preliminary} and play the same crucial role in our result as they do in, for example,  \cite{dorian,tian}.

\vspace{0.5cm}

\textbf{Formulation of the problem.}
Let now $n\geq 3$ and $m\in \N$. Fix $1 \leq q \leq 2$ and $q'= \frac{q}{q-1} \geq 2$ its conjugate exponent. Let $u \in W^{1, (\frac{n}{n-1},q')}(B^n, \R^m)$ be a solution of the system
\begin{equation} \begin{aligned}
   - \Delta u= \Omega \cdot \nabla u \quad \text{ in } \mathcal{D}'(B^n), \label{eq: model-pde}
\end{aligned} \end{equation}
where $\Omega \in L^{(n,q)}(B^n, \mathfrak{so}(m)\otimes \wedge^1)$ is anti-symmetric.
The right hand side of \eqref{eq: model-pde} becomes critical due to $L^{(\frac{n}{n-1},q')} \cdot L^{(n,q)} \hookrightarrow L^1$ and it corresponds to the endpoint case of the classical Calderón-Zygmund theory. In this setting, we establish better regularity, that is, the continuity of solutions of \eqref{eq: model-pde}.

\vspace{0.3cm}

The assumption $\Omega \in L^{(n,q)}$ arises naturally in the proof (see \eqref{eq:motivation-q-q-prime}), as it is required to simultaneously obtain a bound on both – the potential $d\xi$ and the gauge $dP$.
It is noteworthy that in the endpoint case $q=2$, the assumption is the same as the one of  Martino and Schikorra in their construction of a gauge for antisymmetric potentials for rotated $n$-harmonic systems \cite{dorian}. At time of creation, the author was not aware of the publication of Guo, Xiang \cite{guo}, which shows that the technique of the Rivière–Gauge construction also works in the case of $\Omega \in L^{(n,2)}$  and coincides with our endpoint case.

\vspace{0.5cm}

We present two approaches to establishing the continuity of $u \in W^{1,(\frac{n}{n-1},q')}$.
In a first version given in Section \ref{sec:version1}, we employ duality estimates below the Lorentz exponent, in particular  $L^{(\frac{n}{n-1},\infty)}$, which is only slightly larger than $L^{(\frac{n}{n-1},q')}$ and has already played a crucial role in the work of Bethuel \cite{bethuel} (see \eqref{eq:bethuel}) and \cite{armin-gauge}. This approach also illustrates that one can circumvent the need to establish the more challenging full $(A,B)$-conservation laws. 

This first approach  is inspired by Rivière and Struwe \cite{struwe-riviere} for systems of the type \eqref{eq: model-pde}, but under their assumption that $u\in W^{1,2}(B^n,\R^m)$ and  $\Omega \in L^2(B^n, \mathfrak{so}(m) \otimes \wedge^1)$. In their setting, the system \eqref{eq: model-pde} becomes supercritical in dimensions $n\geq 3$ and they have to assume additional Morrey assumptions in order to establish an "almost Jacobian" structure. 
\vspace{0.5cm}
We now give the  main theorem of the first approach.
\begin{theorem}\label{thm:main-thm}
Let $m \in \mathbb{N}$. Let $q \le 2$ and denote by $q' = \frac{q}{q - 1}\geq 2$ its conjugate exponent. For every $n\geq 3$, there exists $\varepsilon_0(n)>0$ such that for every weak solution $u \in W^{1, (\frac{n}{n-1},q')}(B^n, \R^m)$ of the equation \eqref{eq: model-pde}
  and for every  $\Omega \in L^{(n,q)}(B^n, \mathfrak{so}(m) \otimes \wedge^1)$  satisfying the smallness assumption 
 \begin{align}
        \Vert \Omega \Vert_{L^{(n,q)}(B^n)} < \varepsilon_0(n), \tag{A}  
    \end{align}
then there exist $\alpha \in (0,1)$ such that  $u\in C^{0,\alpha}_\mathrm{loc}(B^n,\R^m)$. 
\end{theorem}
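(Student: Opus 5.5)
The plan is to follow the Rivière--Struwe scheme adapted to Lorentz spaces. The argument has three stages: build a Coulomb-type gauge, derive a decay estimate by duality, and iterate that decay to obtain Hölder continuity.

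\textbf{Step 1: the gauge.} Using the smallness assumption (A), we construct $P \in W^{1,(n,q)}(B^n,\mathrm{SO}(m))$ and $\xi \in W^{1,(n,q)}(B^n,\mathfrak{so}(m)\otimes\wedge^{n-2})$ with
\[
P^{-1}dP + P^{-1}\Omega P = \ast d\xi, \qquad \xi=0 \text{ on } \partial B^n,
\]
and
\[
\|dP\|_{L^{(n,q)}}+\|d\xi\|_{L^{(n,q)}}\le C\|\Omega\|_{L^{(n,q)}}.
\]
The construction goes by Uhlenbeck's continuity method, minimizing/solving $d^\ast(P^{-1}dP+P^{-1}\Omega P)=0$. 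The a priori estimate closes because $\Delta$ maps $\mathrm{div}$ of $L^{(n,q)}$-products into $W^{1,(n,q)}$, and $W^{1,(n,q)}\hookrightarrow L^\infty$ is available only for $q=1$. For $1<q\le 2$, $W^{1,(n,q)}$ is not in $L^\infty$, but $P$ is bounded since it takes values in $\mathrm{SO}(m)$. The quadratic terms $dP\cdot dP$, $dP\cdot\Omega$ therefore lie in $L^{(n/2,q/2)}$, which is fine because $q\le 2$ gives $q/2\le 1$. This is exactly where the condition $q\le 2$ enters. With this gauge the equation becomes
\[
-\operatorname{div}(P^{-1}du)=\ast d\xi\cdot P^{-1}du .
\]

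\textbf{Step 2: decay estimate.} Fix $B_r(x_0)\subset B_{1/2}$. We Hodge decompose on $B_r$:
\[
P^{-1}du = dD + d^\ast E + h ,
\]
with $h$ harmonic. The coexact part is controlled through $d(P^{-1}du)=dP^{-1}\wedge du$, a product of $L^{(n,q)}$ and $L^{(\frac{n}{n-1},q')}$. By duality against $W^{1,(n,q)}$ test functions (or using $L^{(\frac n{n-1},\infty)}$ as in the Bethuel estimate \eqref{eq:bethuel}), this gives
\[
\|d^\ast E\|_{L^{(\frac{n}{n-1},\infty)}(B_r)}\le C\|dP\|_{L^{(n,q)}}\|du\|_{L^{(\frac{n}{n-1},q')}} .
\]
For $dD$, we solve $\Delta D=\ast d\xi\cdot P^{-1}du$. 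Here $d\xi\cdot du$ has a Jacobian-type structure, $d\xi\wedge dv$-like up to the rotation, and the duality argument (testing with $\varphi$, $\|d\varphi\|_{L^{(n,1)}}\le1$, where $W^{1,(n,1)}\hookrightarrow C^0$) bounds $\|dD\|_{L^{(\frac{n}{n-1},\infty)}}$ by $\varepsilon_0\|du\|_{L^{(\frac{n}{n-1},q')}}$. Since $h$ is harmonic,
\[
\|h\|_{L^{(\frac n{n-1},\infty)}(B_{\delta r})}\le C\delta^{n-1}\|h\|_{L^{(\frac n{n-1},\infty)}(B_r)} .
\]
Combining these, we obtain
\[
\|du\|_{L^{(\frac{n}{n-1},\infty)}(B_{\delta r})}\le (C\delta^{n-1}+C\varepsilon_0)\|du\|_{L^{(\frac{n}{n-1},\infty)}(B_r)} + (\text{tail terms}).
\]
The problem is that the estimates in the strong space $L^{(\frac n{n-1},q')}$ on the right must be converted back to the weak $L^{(\frac n{n-1},\infty)}$ norm. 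The way to handle this is to put $\infty$ in the fine index wherever duality against $L^{(n,1)}$ allows it.

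\textbf{Step 3: iteration.} We choose $\delta$, then $\varepsilon_0$, small, and iterate to get
\[
\|du\|_{L^{(\frac{n}{n-1},\infty)}(B_r)}\le Cr^{\beta} .
\]
Since $L^{(\frac n{n-1},\infty)}$ scales like the Morrey-type norm $r^{-(n-1)}\int_{B_r}|du|$ up to constants, this yields $du$ in the Morrey space $L^{1,n-1+\beta}$, and Morrey's Dirichlet growth lemma then gives $u\in C^{0,\alpha}_{\mathrm{loc}}$.

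\textbf{Main obstacle.} I expect the hardest part to be Step 2: obtaining the exact $\varepsilon_0$-absorbable bound for the $d\xi$-term and the $dP^{-1}\wedge du$ term in the weak norm on balls. This requires localization (extension of $u-\bar u$ with cutoffs) and the boundary behaviour of the Hodge decomposition in Lorentz spaces. Step 1 is comparatively standard once $q\le2$ is in place.
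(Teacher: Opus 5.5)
Your outline (gauge, decay of $\|du\|_{L^{(\frac{n}{n-1},\infty)}}$ on balls, harmonic decay, iteration) matches the paper's. Hodge-decomposing $P^{-1}du$ instead of the test form is only dual bookkeeping. But Step 2 as written does not close, and the remedy you gesture at (``put $\infty$ in the fine index wherever duality allows'') is exactly the missing idea.

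Your bounds for $dD$ and $d^\ast E$ have $\|du\|_{L^{(\frac{n}{n-1},q')}(B_r)}$ on the right. The left side and the harmonic part are in $L^{(\frac{n}{n-1},\infty)}$, so nothing iterates. You also cannot just replace $q'$ by $\infty$: $dP$ and $d\xi$ are only in $L^{(n,q)}$, and $L^{(n,q)}\cdot L^{(\frac{n}{n-1},\infty)}\not\subset L^1$ for $q>1$. For instance, pair $|x|^{-1}\log^{-a}(1/|x|)$, with $\frac1q<a\le 1$, against $|x|^{1-n}$.

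The paper instead integrates by parts once more, so that $u$ appears undifferentiated. Using $\varphi=0$ and $\ast\psi=0$ on $\partial B_r$,
\[ \int_{B_r}d\varphi\wedge\ast P^{-1}du=-\int_{B_r}d\xi\wedge d(\varphi P^{-1})\,(u-\bar u_{B_r}),\qquad \int_{B_r}P^{-1}du\wedge d\ast\psi=-\int_{B_r}dP^{-1}(u-\bar u_{B_r})\wedge d\ast\psi . \]
The Lorentz Poincar\'e--Sobolev inequality then gives $\|u-\bar u_{B_r}\|_{L^{(\frac{n}{n-2},\infty)}(B_r)}\lesssim\|\nabla u\|_{L^{(\frac{n}{n-1},\infty)}(B_r)}$. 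So only the complementary factor must lie in $L^{(\frac n2,1)}$.
\begin{itemize}
\item For $dP^{-1}\wedge d\ast\psi$ this follows from $d^\ast\psi\in L^{(n,1)}$.
\item For $d\xi\wedge(d\varphi\,P^{-1}+\varphi\,dP^{-1})$ it uses $\varphi\in L^\infty$, $d\varphi\in L^{(n,1)}$, and $L^{(n,q)}\cdot L^{(n,q)}\subset L^{(\frac n2,1)}$.
\end{itemize}
The last inclusion holds exactly when $q\le 2$. This rotation error term $\varphi\,dP^{-1}$ is where the paper's remark locates the essential use of $q\le2$, not merely the gauge construction as your Step 1 claims. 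No cutoffs or tail terms arise in this argument.

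Step 3 also contains a scaling error. The correct relation is
\[ \int_{B_r}|du|\lesssim\|du\|_{L^{(\frac{n}{n-1},\infty)}(B_r)}\,\|1\|_{L^{(n,1)}(B_r)}\lesssim r\,\|du\|_{L^{(\frac{n}{n-1},\infty)}(B_r)} . \]
For constant $|du|$ the weak norm on $B_r$ is of order $r^{n-1}$, whereas $r^{-(n-1)}\int_{B_r}|du|$ is of order $r$, so the two are not comparable. Hence a decay $\|du\|_{L^{(\frac{n}{n-1},\infty)}(B_r)}\le Cr^\beta$ only yields $\int_{B_r}|du|\lesssim r^{1+\beta}$. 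Dirichlet growth requires $r^{n-1+\mu}$, i.e.\ $\beta>n-2$.

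This is attainable only because the harmonic part decays like $\delta^{n-1}$. The paper fixes $\alpha\in(n-2,n-1)$, then chooses $\delta$ with $C\delta^{n-1}\le\frac12\delta^\alpha$, and then $\varepsilon_0$ with $C\varepsilon_0\le\frac12\delta^\alpha$. With an unspecified small $\beta$, as in your write-up, continuity does not follow for $n\ge 3$.
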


\vspace{0.5cm}
In a second version given in Section \ref{sec:version2}, we show that it is indeed  possible to construct a "full" $(A,B)$-conservation law/ Rivière's $\mathrm{GL}(m)$-Gauge to obtain the same result. 
More precisely, instead of transforming the system by \(P^{-1}\), as was done in the first approach, the PDE is now transformed by
\(A \in L^\infty \cap W^{1,(n,q)}(B^n,\mathrm{GL}(m))\) and we obtain
\begin{equation}
    \begin{aligned}
        -\operatorname{div}(A\, du)
        &= -dA \cdot du + A \Omega \cdot du
        = \underbrace{(-dA + A\Omega)}_{=:(i)}  \cdot du.
    \end{aligned}
\end{equation}
Further, we show the existence of $B \in W^{1,(n,q)}(B^n, \R^{m \times m} \otimes \wedge ^2)$  in order to write \((i)\) as a purely co-exact expression, so that the right-hand side becomes a "full" Jacobian:
\begin{align}
    -\operatorname{div}(Adu)= \underbrace{d^\ast B}_{\in L^{(n,q)}}  \cdot \underbrace{du}_{\in L^{(\frac{n}{n-1},q')}}.
\end{align}

The following theorem is the main result of this second approach and leads to optimal regularity.

\begin{theorem}\label{thm:main-theorem-2}
Let $n\geq 3$ and $m \in \mathbb{N}$. Let $q \le 2$ and denote by $q' = \frac{q}{q - 1}\geq 2$ its conjugate exponent. There exists $\varepsilon_0(n)>0$ such that for every weak solution $u \in W^{1, (\frac{n}{n-1},q')}(B^n, \R^m)$ of the equation \eqref{eq: model-pde}
  and for every  $\Omega \in L^{(n,q)}(B^n, \mathfrak{so}(m) \otimes \wedge^1)$  satisfying the smallness assumption 
 \begin{align}
        \Vert \Omega \Vert_{L^{(n,q)}(B^n)} < \varepsilon_0(n), \tag{A}  
    \end{align}

  we then have $u\in W^{2,p}_\mathrm{loc}(B^n,\R^m)$ for any $p<n$ and hence $C^{0,\alpha}_\mathrm{loc}(B^n,\R^m)$ for any $\alpha \in (0,1)$. \\ 
 Moreover, there exists a $(A,B)$–Gauge with 
 $A \in L^\infty\cap W^{1,(n,q)}(B^n, \R^{m\times m})$ and $B\in W^{1,(n,q)}(B^n, \R^{m\times m} \otimes \wedge^2)$ satisfying
    \begin{equation}
        -d^\ast B \coloneqq  dA - A \Omega. \label{eq:def-b-star}
    \end{equation}
    such that the weak solution $u\in W^{1, (\frac{n}{n-1},q')}(B^n,\R^m)$ of \eqref{eq: model-pde} satisfies the following conservation law
\begin{equation}
    \begin{aligned}
        d(\ast A du + (-1)^{n-1} (\ast B)\wedge du)=0. \label{eq:conservation-law}
    \end{aligned}
\end{equation}
\end{theorem}
\begin{remark}
    We point out that Theorem \ref{thm:main-theorem-2} immediately implies Theorem \ref{thm:main-thm}. 
\end{remark}

\textbf{Novelty of this project.}
 The novelty of this project lies in developing a more refined understanding of critical systems with antisymmetric potentials in higher dimensions. 
The paper relies heavily on the following type of estimate:
\begin{equation}
    \begin{aligned}
        \Vert \varphi \Vert_{L^\infty(B^n)}+ \Vert d\varphi \Vert_{L^{(n,1)}(B^n)}  + \Vert \nabla^2 \varphi \Vert_{L^{(\frac{n}{2},1)}(B^n)}   \lesssim \Vert da \Vert_{L^{(n,q)}(B^n)} \Vert db \Vert_{L^{(n,q')}(B^n)},
    \end{aligned} \label{eq:high-wente}
\end{equation}
where $\varphi \in W^{1,(n,1)}(B^n,\R^m)$ is a solution of 
\begin{equation}
\begin{cases}
\Delta \varphi = \ast (da \wedge  db) & \text{on } B^n, \\
\varphi = 0 & \text{on } \partial B^n, \label{eq:wente2}
\end{cases}
\end{equation}
and $a\in W^{1,(n,q)}(B^n,\R^m)$ and $b\in W^{1,(n,q')}(B^n, \wedge^{n-2})$.
Estimate \eqref{eq:high-wente} can be interpreted as a "high-dimensional Wente" and  should be compared with \eqref{eq:clms-equation}. 
\vspace{0.5cm}

In Section \ref{sec:counterexample} we show that the assumption on the integrability of $\Omega\in L^{(n,q)}(B^n, \mathfrak{so}(m) \otimes \wedge^1)$ for $q\leq 2$  is indeed optimal. More precisely, one cannot assume a milder assumption on $\Omega \in L^{n,q}(B^n, \mathfrak{so}(m) \otimes \wedge^1)$ for some $q>2$ in order to conclude continuity on $u$.

\vspace{0.5cm}

\section{Preliminary}\label{sec:preliminary}

We provide a brief introduction to Lorentz spaces, which arise naturally as refinements of standard Lebesgue spaces. For further reading, we refer to \cite[p.48]{grafakos}. 

\vspace{0.5cm}

Let $\Omega \subset \mathbb{R}^n$ be an open set. For any measurable function $f: \Omega \to \mathbb{R}$, we define its decreasing rearrangement $f^\ast: [0, \infty) \to [0, \infty)$ as
\begin{equation}
    f^\ast(t) := \inf \{\lambda \ge 0 : |\{x \in \Omega : |f(x)| > \lambda\}| \le t\} \quad \text{for } t > 0,
\end{equation}
and its average as 
\begin{equation}
f^{\ast\ast}(s) := \frac{1}{s} \int_0^s f^\ast(t) dt, \quad \text{ for } s>0.
\end{equation}

Given exponents $p \in [1, \infty)$ and $q \in [1, \infty]$, the Lorentz quasi-norm is defined as
\begin{equation}
    [f]_{L^{(p,q)}(\Omega)} := 
    \begin{cases} 
     \Bigg( \displaystyle  \int_0^\infty t^{\frac{q}{p}}   f^\ast(t)^q \frac{dt}{t} \Bigg)^{\frac{1}{q}} & \text{if } q < \infty, \\
       \displaystyle  \sup_{t > 0} t^{\frac{1}{p}} f^\ast(t) & \text{if } q = \infty.
    \end{cases} \label{eq:quasinorm}
\end{equation}
For  $p \in (1, \infty)$ and $ q \in [1,\infty]$, replacing $f^\ast$ with $f^{\ast\ast}$ in the  definition \eqref{eq:quasinorm} yields an equivalent norm to \eqref{eq:quasinorm}, denoted by $\|f\|_{L^{(p,q)}(\Omega)}$ (see \cite[Thm. 4.6, Chap. 4]{bennett}).
\begin{definition}
    Let $k \in \mathbb{N}$ and $\Omega \subset \mathbb{R}^n$ be an open set. For $1 < p < \infty$, $1 \le q \le \infty$, we define the Soboloev-Lorentz space as
\begin{equation}
W^{k,(p,q)}(\Omega) := \left\{f \in L^{(p,q)}(\Omega) : \partial^\alpha f \in L^{p,q}(\Omega)  \text{ for each } 0 \le |\alpha| \le k\right\}.
\end{equation}
\end{definition}
It holds $L^p(\Omega) = L^{(p,p)}(\Omega)$.  
If $|\Omega| < \infty$ and $1 \le q < p < r \le \infty$, we have  \cite[Prop.4.2, Chap. 4]{bennett}
\begin{equation}
   \underbrace{L^{(r,r)}}_{ =L^r(\Omega)}(\Omega) 
   \overset{r>p}{\subset} L^{(p,r)}(\Omega)   \overset{r<p}{ \subset }  \underbrace{L^{(p,p)}(\Omega)}_{=L^p(\Omega)} 
  \overset{p<q}{ \subset }  L^{(p,q)}(\Omega) 
  \overset{ p>q }{\subset} \underbrace{  L^{(q,q)}(\Omega)}_{=L^q(\Omega)}.
\end{equation}

Several classical functional inequalities can be generalised to Lorentz spaces.

\begin{theorem}[Hölder's Inequality for Lorentz Spaces.] \label{thm:holder-lorentz} 
Assume $f \in L^{(p_1,q_1)}(\Omega)$ and $g \in L^{(p_2,q_2)}(\Omega)$ for   $1 < p_1, p_2, p < \infty$ and let $1 \le q_1, q_2, q \le \infty$, that satisfy
\begin{equation}
    \begin{aligned}
        \frac{1}{p_1} + \frac{1}{p_2} = \frac{1}{p}, \qquad \text{and } \qquad 
        \frac{1}{q_1} + \frac{1}{q_2} \ge \frac{1}{q}.
    \end{aligned}
\end{equation}
Then $f g \in L^{(p,q)}(\Omega)$ and satisfies
\begin{equation}
    \|fg\|_{L^{(p,q)}} \le C(p_1,p_2,q_1,q_2) \|f\|_{L^{(p_1,q_1)}} \|g\|_{L^{(p_2,q_2)}}.
\end{equation} This result  extends to the case where $(p_1, q_1) = (\infty, \infty)$, reducing to the standard $L^\infty$ bound.
\end{theorem}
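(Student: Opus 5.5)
The plan is to reduce everything to rearrangement inequalities and the classical Hölder inequality in one variable, using the equivalence of $\|\cdot\|_{L^{(p,q)}}$ with the quasi-norm $[\cdot]_{L^{(p,q)}}$ for $1<p<\infty$.

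\emph{Step 1: rearrangement bound for the product.} I would first prove the pointwise estimate
\begin{equation*}
(fg)^\ast(t_1+t_2)\le f^\ast(t_1)\,g^\ast(t_2),
\end{equation*}
which follows from the inclusion $\{|fg|>f^\ast(t_1)g^\ast(t_2)\}\subset\{|f|>f^\ast(t_1)\}\cup\{|g|>g^\ast(t_2)\}$ together with $|\{|f|>f^\ast(t)\}|\le t$. Taking $t_1=t_2=t/2$ gives $(fg)^\ast(t)\le f^\ast(t/2)g^\ast(t/2)$. Hence
\begin{equation*}
[fg]_{L^{(p,q)}}^q\le \int_0^\infty \bigl(t^{\frac1{p_1}}f^\ast(t/2)\bigr)^q\bigl(t^{\frac1{p_2}}g^\ast(t/2)\bigr)^q\frac{dt}{t},
\end{equation*}
where $\tfrac1p=\tfrac1{p_1}+\tfrac1{p_2}$ is used to split the weight $t^{q/p}$. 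The substitution $t\mapsto 2t$ costs only a constant $2^{1/p}$.

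\emph{Step 2: one-dimensional Hölder in the measure $dt/t$.} Set $\tfrac1r=\tfrac1{q_1}+\tfrac1{q_2}$. In the case $r=q$, apply Hölder with exponents $q_1/q$ and $q_2/q$ to the functions $F(t)=t^{1/p_1}f^\ast(t)$ and $G(t)=t^{1/p_2}g^\ast(t)$. This gives $[fg]_{(p,q)}\lesssim [f]_{(p_1,q_1)}[g]_{(p_2,q_2)}$. The cases where some $q_i=\infty$ are handled by pulling out $\sup F$ or $\sup G$. For $\tfrac1{q_1}+\tfrac1{q_2}>\tfrac1q$ we have $r<q$, and I would invoke the nesting $L^{(p,r)}\subset L^{(p,q)}$ with $[h]_{(p,q)}\lesssim[h]_{(p,r)}$. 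This is the main obstacle. It holds on any measure space, not just bounded ones, because $t^{1/p}h^\ast(t)\le C[h]_{(p,r)}$, which comes from the monotonicity of $h^\ast$:
\begin{equation*}
[h]_{(p,r)}^r\ge \int_0^t s^{r/p}h^\ast(s)^r\,\frac{ds}{s}\ge h^\ast(t)^r\,\frac{p}{r}\,t^{r/p}.
\end{equation*}
One then interpolates,
\begin{equation*}
\int_0^\infty (t^{1/p}h^\ast)^q\,\frac{dt}{t}\le \sup_t\bigl(t^{1/p}h^\ast(t)\bigr)^{q-r}\int_0^\infty (t^{1/p}h^\ast)^r\,\frac{dt}{t}.
\end{equation*}

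\emph{Step 3: back to the norm.} Finally I would convert the quasi-norm estimates back to $\|\cdot\|$ via the cited equivalence \cite[Thm.~4.6]{bennett}. This is valid since all of $p_1,p_2,p\in(1,\infty)$, and it yields the constant $C(p_1,p_2,q_1,q_2)$. The endpoint $(p_1,q_1)=(\infty,\infty)$ is trivial, since $(fg)^\ast\le\|f\|_\infty g^\ast$.
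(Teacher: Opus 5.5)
Your proof is correct. The paper gives no argument of its own for this statement; it only cites O'Neil's Theorem~3.4. Your proposal therefore replaces a citation, and the cited argument uses a different key lemma from yours.

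O'Neil works with the norm defined via $h^{\ast\ast}$. His key lemma is the averaged Hardy--Littlewood inequality $\int_0^t (fg)^\ast(s)\,ds \le \int_0^t f^\ast(s)\,g^\ast(s)\,ds$, i.e.\ $(fg)^{\ast\ast}(t)\le \frac1t\int_0^t f^\ast(s)g^\ast(s)\,ds$. Hardy's inequality then strips off the average; this is where $p>1$ is needed, with constant $p'$. H\"older in $dt/t$, the trivial bound $f^\ast\le f^{\ast\ast}$ and the nesting in the second index finish the proof.

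You use instead the pointwise splitting $(fg)^\ast(t)\le f^\ast(t/2)\,g^\ast(t/2)$. This costs a harmless dilation factor $2^{1/p}$. In exchange, it gives the quasi-norm estimate $[fg]_{L^{(p,q)}}\le C\,[f]_{L^{(p_1,q_1)}}[g]_{L^{(p_2,q_2)}}$ for all $0<p_1,p_2<\infty$ and all $q_i$. Your nesting step is then applied to the product at the quasi-norm level, so an intermediate exponent $r<1$ causes no trouble.

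In your route the restriction $p>1$ enters only in Step~3, through the norm equivalence. That equivalence is itself Hardy's inequality, so both routes ultimately use Hardy exactly once. Your argument is the more elementary and slightly more general one: it proves the quasi-norm inequality with no restriction on the exponents. O'Neil's is more economical in constants and never leaves the normed setting.
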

\begin{proof}
    See \cite[Thm. 3.4]{neil}
\end{proof}

\begin{theorem}[Young's Inequality for Convolutions.] \label{thm:young-ineq}
 Assume $f \in L^{(p_1,q_1)}(\R^n)$ and $g \in L^{(p_2,q_2)}(\R^n)$ for   $1 < p_1, p_2, p < \infty$ and let $1 \le q_1, q_2, q \le \infty$, that satisfy
\begin{equation}
    \begin{aligned}
        \frac{1}{p_1} + \frac{1}{p_2} = 1+\frac{1}{p}, \qquad \text{and } \qquad 
        \frac{1}{q_1} + \frac{1}{q_2} \ge \frac{1}{q}.
    \end{aligned}
\end{equation} 
Then $f \ast g \in L^{(p,q)}(\mathbb{R}^n)$ and satisfies 
\begin{equation}
    \|f \ast g\|_{L^{(p,q)}} \le C \|f\|_{L^{(p_1,q_1)}} (p_1,p_2,q_1,q_2)\|g\|_{L^{(p_2,q_2)}}.
\end{equation}

\end{theorem}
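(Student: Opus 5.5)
The plan is to reduce the Lorentz-space Young inequality to the classical Lebesgue one via real interpolation, using the bilinear Marcinkiewicz/O'Neil interpolation theorem. Alternatively, one can follow O'Neil's direct rearrangement approach, which I take as the primary route since it gives the $q$-indices cleanly.

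The key tool is O'Neil's rearrangement inequality for convolutions:
\begin{equation*}
(f\ast g)^{\ast\ast}(t)\le t\, f^{\ast\ast}(t)\,g^{\ast\ast}(t)+\int_t^\infty f^\ast(s)g^\ast(s)\,ds .
\end{equation*}
I would prove it by splitting $f=f_1+f_2$ at the level $f^\ast(t)$, and likewise $g$. Here $f_1$ is the truncated large part, supported on a set of measure $\le t$, and $f_2$ is bounded by $f^\ast(t)$. One then estimates $\frac1t\int_E|f\ast g|$ over sets $E$ with $|E|=t$, using $L^1\ast L^\infty$ and layer-cake arguments on the pieces. This is the main technical step; the bookkeeping of the four cross terms is routine but delicate.

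With this inequality in hand, I multiply by $t^{1/p}$ and take the $L^q(dt/t)$ norm.

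\textbf{First term.} Since $1/p_1+1/p_2=1+1/p$, we have $t^{1/p}\cdot t=t^{1/p_1}t^{1/p_2}$. Hence
\begin{equation*}
\big\|t^{1/p}\,t f^{\ast\ast}g^{\ast\ast}\big\|_{L^q(dt/t)}\le \big\|t^{1/p_1}f^{\ast\ast}\big\|_{L^{q_1}(dt/t)}\,\big\|t^{1/p_2}g^{\ast\ast}\big\|_{L^{q_2}(dt/t)}.
\end{equation*}
This follows from the ordinary Hölder inequality on $(0,\infty)$ with measure $dt/t$, together with the monotonicity $\ell^{r}\subset\ell^{s}$ for $r\le s$ in this scale. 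That monotonicity holds for monotone-type functions such as $t^{1/p_i}f^{\ast\ast}$ via a dyadic argument, which is why $1/q_1+1/q_2\ge 1/q$ suffices rather than equality.

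\textbf{Second term.} I would apply Hardy's inequality for the adjoint operator $\int_t^\infty h(s)\,ds$ in $L^q(t^{q/p}dt/t)$. This is valid because $1/p>0$, i.e. $p<\infty$. It bounds the term by $\|s^{1+1/p}f^\ast g^\ast\|_{L^q(ds/s)}$, and the same Hölder step then finishes.

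Since $p_1,p_2,p>1$, the $f^{\ast\ast}$ and $f^\ast$ quantities are equivalent norms (as recalled in Section \ref{sec:preliminary}), so everything can be expressed in the norms $\|\cdot\|_{L^{(p_i,q_i)}}$. The constant depends only on $p_1,p_2,q_1,q_2$ through the Hardy and Hölder constants.
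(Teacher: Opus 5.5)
Your proposal is correct and follows essentially the same route as the paper. The paper's proof only cites O'Neil's Theorem 2.6, which is proved exactly this way: via the rearrangement inequality $(f\ast g)^{\ast\ast}(t)\le t f^{\ast\ast}(t)g^{\ast\ast}(t)+\int_t^\infty f^\ast(s)g^\ast(s)\,ds$, then Hardy's inequality (which needs $1/p>0$), then the Lorentz--H\"older step combined with the monotonicity of Lorentz spaces in the second index. Your truncation of $f$ and $g$ at heights $f^\ast(t)$ and $g^\ast(t)$ does prove that rearrangement inequality; careful bookkeeping even recovers O'Neil's sharp form, and a harmless multiplicative constant would suffice in any case.
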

\begin{proof}
    See \cite[Thm. 2.6]{neil}.
\end{proof}

\begin{corollary}[Improved Sobolev Embeddings]\label{cor:improved-sobolev}
    Let $n \ge 2$, $1 \le p < n$, and $1 \leq q \leq \infty$. Define the Sobolev conjugate exponent $p^*= \frac{np}{n-p}$.
    Then the Sobolev-Lorentz space $W^{1,(p,q)}(\mathbb{R}^n)$ embeds continuously into the Lorentz space $L^{(p^*, q)}(\mathbb{R}^n)$:
    \begin{equation}
        W^{1,(p,q)}(\mathbb{R}^n) \hookrightarrow L^{p^*, q}(\mathbb{R}^n).
    \end{equation}  
    In particular, for standard Sobolev spaces where $q=p$, we have $W^{1,p}(\mathbb{R}^n) \hookrightarrow L^{p^*, p}(\mathbb{R}^n)$, and for the endpoint $p = 1$,
    \begin{equation}\label{eq:poornima}
        W^{1,1}(\mathbb{R}^n) \hookrightarrow L^{\frac{n}{n-1}, 1}(\mathbb{R}^n).
    \end{equation}
\end{corollary}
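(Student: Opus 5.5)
The idea is to represent $f$ through its gradient by a Riesz potential and then apply the Lorentz Young inequality (Theorem \ref{thm:young-ineq}). For $f \in C_c^\infty(\mathbb{R}^n)$ we have the pointwise bound
\begin{equation*}
|f(x)| \le C_n \int_{\mathbb{R}^n} \frac{|\nabla f(y)|}{|x-y|^{n-1}}\,dy = C_n\,\big(|\nabla f| \ast |\cdot|^{1-n}\big)(x).
\end{equation*}
It follows from the fundamental theorem of calculus along rays, $f(x) = -\int_0^\infty \partial_r f(x+r\omega)\,dr$, after averaging over $\omega \in S^{n-1}$.

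The kernel $|x|^{1-n}$ lies in $L^{(\frac{n}{n-1},\infty)}(\mathbb{R}^n)$: its distribution function is $|\{|x|^{1-n}>\lambda\}| = c_n \lambda^{-\frac{n}{n-1}}$, so $\sup_t t^{\frac{n-1}{n}}(|\cdot|^{1-n})^\ast(t) < \infty$.

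For $1<p<n$, apply Theorem \ref{thm:young-ineq} with
\begin{equation*}
(p_1,q_1)=(p,q), \qquad (p_2,q_2)=\Big(\tfrac{n}{n-1},\infty\Big).
\end{equation*}
Then $\frac1p+\frac{n-1}{n} = 1+\frac{1}{p^*}$, and $\frac1q + 0 \ge \frac1q$, so all hypotheses ($1<p_1,p_2,p^*<\infty$) hold. This gives
\begin{equation*}
\|f\|_{L^{(p^*,q)}} \lesssim \|\nabla f\|_{L^{(p,q)}}.
\end{equation*}
Density of $C_c^\infty$ in $W^{1,(p,q)}$ for $q<\infty$ extends this to the whole space. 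For $q=\infty$, where density fails, I would argue by mollification and truncation, using lower semicontinuity of the $L^{(p^*,\infty)}$ quasinorm under a.e. convergence. The $L^{(p,q)}$ part of the norm is not even needed, since only $\nabla f$ enters. The case $q=p$ is then immediate.

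The main obstacle is the endpoint $p=1$, claim \eqref{eq:poornima}. Here Theorem \ref{thm:young-ineq} is unavailable, because it requires $p_1>1$, and in any case $L^1 \ast L^{(\frac{n}{n-1},\infty)}$ only lands in $L^{(\frac{n}{n-1},\infty)}$. For this case I would use the coarea/truncation argument (Maz'ya, Poornima). For $f\in C_c^\infty$, write
\begin{equation*}
|f| \le \sum_{k\in\mathbb{Z}} 2^{k+1}\chi_{E_k}, \qquad E_k=\{|f|>2^k\}.
\end{equation*}
Apply the isoperimetric inequality to each truncation $f_k=\min\{(|f|-2^{k})^+,2^k\}$, which gives
\begin{equation*}
2^k|E_{k+1}|^{\frac{n-1}{n}} \lesssim \int_{\{2^k<|f|\le 2^{k+1}\}} |\nabla f|.
\end{equation*}
Then $\|\chi_{E}\|_{L^{(\frac{n}{n-1},1)}} \simeq |E|^{\frac{n-1}{n}}$, and the triangle inequality for the equivalent norm $\|\cdot\|_{L^{(\frac{n}{n-1},1)}}$ yields
\begin{equation*}
\|f\|_{L^{(\frac{n}{n-1},1)}} \lesssim \sum_k 2^k |E_{k}|^{\frac{n-1}{n}} \lesssim \sum_k \int_{\{2^{k-1}<|f|\le 2^k\}}|\nabla f| \le \|\nabla f\|_{L^1}.
\end{equation*}
A density argument finishes the proof. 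The delicate points are the index shift in this sum and the use of a genuine norm, rather than the quasinorm, to sum the characteristic functions.
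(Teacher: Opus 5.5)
Your proof is correct. The paper gives no argument of its own here, only the references \cite{peetre}, \cite{neil} and, for $p=1$, \cite{poornima}.

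For $1<p<n$ you combine the pointwise bound $|f|\le C_n\,|\nabla f|\ast|\cdot|^{1-n}$ with the convolution inequality of Theorem~\ref{thm:young-ineq}, using that the kernel lies in $L^{(\frac{n}{n-1},\infty)}$. This is exactly the O'Neil argument the paper points to. Your treatment of $q=\infty$ by mollification, cut-off and the Fatou-type lower semicontinuity of the weak quasinorm is a valid replacement for the missing density.

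For $p=1$ you go beyond the paper. Instead of citing Poornima, you give the Maz'ya truncation argument: the $L^1$-Sobolev (isoperimetric) inequality applied to $f_k=\min\{(|f|-2^k)^+,2^k\}$, then summation using the triangle inequality of the normed version of $L^{(\frac{n}{n-1},1)}$. This makes the endpoint self-contained, and you correctly explain why the convolution route cannot reach $L^{(\frac{n}{n-1},1)}$ there.

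Two minor comments.

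First, at $p=1$ you only treat $q=1$, and this is the right reading of the statement. The paper defines $W^{1,(p,q)}$ only for $p>1$, and for $q>1$ the estimate $\|f\|_{L^{(\frac{n}{n-1},q)}}\lesssim\|f\|_{L^{(1,q)}}+\|\nabla f\|_{L^{(1,q)}}$ actually fails. To see this, take a radial bump equal to $1$ on $B_\rho$ which drops to $0$ in $N$ steps of size $1/N$ across disjoint sub-shells of $B_{2\rho}\setminus B_\rho$. Give the $j$-th sub-shell width $\rho 2^{-j}/N$, so that $|\nabla f|=2^j/\rho$ there, for $j=1,\dots,N$. Then the right-hand side is $\sim\rho^{n}+\rho^{n-1}N^{\frac1q-1}$, while the left-hand side is $\gtrsim\rho^{n-1}$.

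Second, your remark that the $L^{(p,q)}$ part of the norm is not needed applies only to the a priori estimate for $C_c^\infty$ functions. In the approximation step it is exactly what rules out constants and controls the cut-off error $\|f_\varepsilon\nabla\chi_R\|_{L^{(p,\infty)}}\lesssim R^{-1}\|f\|_{L^{(p,\infty)}}$.
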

\begin{proof}
   See \cite[p. 294, Thm. 4.1]{peetre}, \cite[Thm. 2.6]{neil}. The case $p=1$ can be find in \cite{poornima}
\end{proof}

\begin{proposition}
    Let $\Omega \subset \mathbb{R}^n$  be a  bounded domain with Lipschitz boundary. Let $1 \le p < n$ and $1 \le q \le \infty$, and  $p^* = \frac{np}{n-p}$ its  conjugate Sobolev exponent. For every $u \in W^{1,(p,q)}(\Omega)$, we have
\begin{equation}\label{eq:poincare_lorentz_critical}
    \|u - \bar{u}_\Omega\|_{L^{p^*,q}(\Omega)} \le C \|\nabla u\|_{L^{p,q}(\Omega)},
\end{equation}
where $\bar{u}_\Omega = \frac{1}{|\Omega|} \int_\Omega u(x) \, dx$ and $C = C(n, p, q, \Omega) > 0$ is a constant independent of $u$.
\end{proposition}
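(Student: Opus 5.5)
Mean-value Poincaré inequality in Lorentz spaces: for $u\in W^{1,(p,q)}(\Omega)$, $\|u-\bar u_\Omega\|_{L^{p^*,q}(\Omega)}\le C\|\nabla u\|_{L^{p,q}(\Omega)}$. The plan is to reduce to Corollary \ref{cor:improved-sobolev} on $\mathbb{R}^n$ with an extension operator, and to remove the zero-order term with a compactness (Rellich) argument, as in the classical $L^p$ proof.

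\textbf{Step 1: a Lorentz Sobolev inequality on $\Omega$ with a lower-order term.} Since $\Omega$ is bounded Lipschitz, Stein's universal extension operator $E$ is bounded $W^{1,r}(\Omega)\to W^{1,r}(\mathbb{R}^n)$ for every $1\le r\le\infty$. Fix $1\le r_0<p<r_1<n$. By real interpolation with the $K$-method, $(L^{r_0},L^{r_1})_{\theta,q}=L^{(p,q)}$, and the same identification holds for the first-order Sobolev spaces because $E$ commutes with the retraction $f\mapsto(f,\nabla f)$. Hence $E$ is bounded $W^{1,(p,q)}(\Omega)\to W^{1,(p,q)}(\mathbb{R}^n)$, and we may multiply by a fixed cutoff to keep the support compact. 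Corollary \ref{cor:improved-sobolev} then gives
\[
\|u\|_{L^{p^*,q}(\Omega)}\le C\big(\|u\|_{L^{(p,q)}(\Omega)}+\|\nabla u\|_{L^{(p,q)}(\Omega)}\big).
\]
If $p=1$, the interpolation must be replaced (there is no interpolation pair below $r_0=1$); for $q=1$ one uses \eqref{eq:poornima} directly, together with the fact that $E$ acts on $W^{1,1}$.

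\textbf{Step 2: contradiction argument.} Suppose the inequality fails. Then there are $u_k$ with $\bar u_k=0$, $\|u_k\|_{L^{p^*,q}}=1$ and $\|\nabla u_k\|_{L^{(p,q)}}\to0$. Since $\Omega$ is bounded, $L^{(p,q)}(\Omega)\subset L^s(\Omega)$ for some $s>1$ below $p$, so $(u_k)$ is bounded in $W^{1,s}(\Omega)$. By Rellich, a subsequence converges in $L^s$ to some $u$. Because $\nabla u_k\to0$ in $L^{(p,q)}$, hence in $L^s$, we get $\nabla u=0$, so $u$ is constant, and since $\bar u=0$, $u=0$. The case $p=1$, $s=1$ is handled with the $W^{1,1}$ Rellich theorem.

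\textbf{Step 3: closing the argument, the main obstacle.} We need $\|u_k\|_{L^{(p,q)}}\to0$. The difficulty is that $L^s$ convergence does not directly give convergence in the Lorentz norm. To handle this, pick $t$ with $p<t<p^*$. Then $L^{p^*,q}\cap L^s\subset L^{(p,q)}$, and interpolating the Lorentz norms gives
\[
\|u_k\|_{L^{(p,q)}}\lesssim \|u_k\|_{L^s}^{\lambda}\,\|u_k\|_{L^{p^*,q}}^{1-\lambda}\to0,
\]
where the $L^{p^*,q}$ factor is bounded by $1$. Applying Step 1 to $u_k$ then yields $1=\|u_k\|_{L^{p^*,q}}\to0$, which is a contradiction. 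The constant obtained this way depends only on $n,p,q,\Omega$.
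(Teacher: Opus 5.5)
The paper does not prove this proposition; it only cites Bennett--Sharpley, O'Neil and Stein. Your extension-plus-compactness scheme is therefore the natural self-contained route. For $1<p<n$ (any $q$) and for $p=q=1$ it works, subject to one repair in Step~1. Steps~2 and~3 are fine as written: the bound $\|f\|_{L^{(p,q)}}\lesssim\|f\|_{L^{s}}^{\lambda}\|f\|_{L^{(p^*,\infty)}}^{1-\lambda}$ with $\frac{1}{p}=\frac{\lambda}{s}+\frac{1-\lambda}{p^*}$ is exactly what closes the contradiction, and the exponent $t$ is never used.

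What does not hold as written is the justification in Step~1. On $\mathbb{R}^n$ one does have $(W^{1,r_0},W^{1,r_1})_{\theta,q}=W^{1,(p,q)}$, because $(g_0,g)\mapsto(1-\Delta)^{-1}(g_0-\operatorname{div}g)$ retracts $(L^r)^{n+1}$ onto $W^{1,r}$ for $1<r<\infty$. On $\Omega$ there is no such retraction, since extending $(u,\nabla u)$ by zero destroys the gradient structure. So interpolation only tells you that $(W^{1,r_0}(\Omega),W^{1,r_1}(\Omega))_{\theta,q}$ sits inside the intrinsically defined $W^{1,(p,q)}(\Omega)$. That is the wrong inclusion for extending an arbitrary $u\in W^{1,(p,q)}(\Omega)$. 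A clean fix: Stein's explicit formula gives the pointwise bound $|Eu|+|\nabla Eu|\le T(|u|+|\nabla u|)$, with $T$ a positive Hardy-type operator bounded on $L^1$ and on $L^\infty$. By Calder\'on's theorem, $T$ is then bounded on every $L^{(p,q)}$ with $1<p<\infty$.

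The genuine gap is the range $p=1<q\le\infty$, which the statement includes. Your Step~1 only offers \eqref{eq:poornima}, i.e.\ $q=1$, and Step~2 uses $L^{(1,q)}(\Omega)\subset L^1(\Omega)$, which fails for $q>1$. This cannot be repaired, because in that range the inequality is false, and so is Corollary~\ref{cor:improved-sobolev} for $p=1<q$. Here is a counterexample.

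Fix $\beta\in\big(\max\{\frac{1}{q},\frac{n-1}{n}\},1\big)$ and radii $R_k\le 1$ with $R_k^{n-1}=c\,k^{-\beta}$. Let $u_N=\sum_{k=1}^{N}\phi_k$, where $\phi_k=1$ on $B_{R_k}$, $\phi_k=0$ outside $B_{R_k+\delta_k}$, and $\phi_k$ is radially linear on the annulus $A_k$ in between. Choose the widths $\delta_k$ so small that the $A_k$ are pairwise disjoint and $|A_k|\approx R_k^{n-1}\delta_k$ decays geometrically; this forces $\delta_k$ to decrease.

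Then $|\nabla u_N|=\delta_k^{-1}$ on $A_k$, and the superlevel set $\{|\nabla u_N|\ge\delta_k^{-1}\}=\bigcup_{k\le j\le N}A_j$ has measure at most $2|A_k|$. Hence $\|\nabla u_N\|_{L^{(1,q)}}^{q}\lesssim\sum_k(|A_k|/\delta_k)^{q}\approx\sum_k k^{-\beta q}$, which is bounded uniformly in $N$. The mean $\bar u_N$ is also bounded, since $\int u_N\lesssim\sum_k R_k^{n}<\infty$ because $\beta>\frac{n-1}{n}$.

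On the other hand $u_N\ge k$ on $B_{R_k}$, so $\|u_N\|_{L^{(\frac{n}{n-1},q)}}^{q}\gtrsim\sum_{k<N}k^{q}\big(k^{-\beta q}-(k+1)^{-\beta q}\big)\approx\sum_{k<N}k^{q(1-\beta)-1}\to\infty$. For $q=\infty$, use $\sup_k k\,|B_{R_k}|^{\frac{n-1}{n}}\approx\sup_k k^{1-\beta}$ together with the corresponding sup-bound for the gradient. Thus on $\Omega=B_2$ the left-hand side of \eqref{eq:poincare_lorentz_critical} blows up while the right-hand side stays bounded.

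The proposition is therefore only true for $1<p<n$ or $p=q=1$. This still covers its use in the paper ($p=\frac{n}{n-1}$, $q=\infty$). Your write-up should state this restriction rather than leave the case $p=1<q$ silently open.
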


\begin{proposition}\label{prop:continuity}
   Let  $f\in L_{\mathrm{loc}}^1(\R^n)$ with $\nabla f \in L^{(n,1)}(\R^n)$. Then $f$ is equivalent to a continuous function and satisfies 
   \begin{equation}
       \Vert f-c\Vert_{L^\infty} \leq C \Vert \nabla f \Vert_{L^{(n,1)}},
   \end{equation}
   where $c=c(f)$ a suitable constant. 
\end{proposition}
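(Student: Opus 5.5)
We would prove Proposition \ref{prop:continuity} by representing $f$ (up to a constant) as a Riesz potential of its gradient and then using the duality between $L^{(n,1)}$ and $L^{(\frac{n}{n-1},\infty)}$.

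\textbf{Step 1: reduction to smooth functions.} Mollify $f$: set $f_\varepsilon=f\ast\rho_\varepsilon$, so that $\nabla f_\varepsilon=(\nabla f)\ast\rho_\varepsilon$. Since convolution with a probability kernel is bounded on $L^{(n,1)}$ (by Theorem \ref{thm:young-ineq}, or simply because the norm $\Vert\cdot\Vert_{L^{(n,1)}}$ is a rearrangement-invariant Banach norm), we get
\[
\Vert\nabla f_\varepsilon\Vert_{L^{(n,1)}}\lesssim\Vert\nabla f\Vert_{L^{(n,1)}}.
\]
If we can show that the smooth functions $f_\varepsilon$ satisfy the oscillation bound
\[
|f_\varepsilon(x)-f_\varepsilon(y)|\le C\Vert\nabla f\Vert_{L^{(n,1)}}
\]
with a modulus of continuity that is uniform in $\varepsilon$, then Arzelà–Ascoli gives local uniform convergence of $f_\varepsilon - f_\varepsilon(0)$ along a subsequence. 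Since $f_\varepsilon\to f$ in $L^1_{\mathrm{loc}}$, the limit is a continuous representative of $f$ up to a constant.

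\textbf{Step 2: the pointwise bound.} For smooth $g$ with $\nabla g\in L^{(n,1)}$, we use the classical representation on balls: for $x,y$ with $|x-y|=r$ and $B=B(x,2r)$,
\[
|g(x)-g_B|\lesssim\int_B\frac{|\nabla g(z)|}{|x-z|^{n-1}}\,dz,
\]
and the same holds for $y$. The kernel $|x-\cdot|^{1-n}$ lies in $L^{(\frac{n}{n-1},\infty)}(\R^n)$ with a norm independent of $x$. The Lorentz Hölder inequality with exponents $(n,1)$ and $(\frac{n}{n-1},\infty)$ (the endpoint duality paired to $L^1$, which follows from the Hardy–Littlewood rearrangement inequality $\int|uv|\le\int_0^\infty u^\ast v^\ast\,dt$) then gives
\[
|g(x)-g_B|\le C\Vert\nabla g\Vert_{L^{(n,1)}(B)}.
\]
Hence $|g(x)-g(y)|\le 2C\Vert\nabla g\Vert_{L^{(n,1)}(B(x,2r))}$.

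\textbf{Step 3: uniform continuity and the constant $c$.} Since the $L^{(n,1)}$ norm is absolutely continuous ($q=1<\infty$), $\Vert\nabla f\Vert_{L^{(n,1)}(B(x,2r))}\to0$ as $r\to0$ for the fixed function $\nabla f$. Applying Step 2 to $g=f_\varepsilon$ would suggest bounding $\Vert\nabla f_\varepsilon\Vert_{L^{(n,1)}(B(x,2r))}$ by $\Vert\nabla f\Vert_{L^{(n,1)}(B(x,2r+\varepsilon))}$, but the extra $\varepsilon$ in the radius would break uniformity in $\varepsilon$ as $r\to0$. To avoid this, we take the uniform modulus from the global $L^{(n,1)}$-equi-integrability of the family $\{\nabla f_\varepsilon\}$ instead: as $\varepsilon\to0$ we have $\nabla f_\varepsilon\to\nabla f$ in $L^{(n,1)}$, and a family convergent in norm is equi-absolutely-continuous. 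For the constant, the global oscillation bound lets us take $c=\lim_{R\to\infty}f_{B_R}$ along a subsequence (the averages are bounded by the oscillation estimate), or simply $c=f(x_0)$ for any fixed point, which gives $\Vert f-c\Vert_{L^\infty}\le 2C\Vert\nabla f\Vert_{L^{(n,1)}}$.

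The main obstacle is the endpoint duality pairing $L^{(n,1)}$ against $L^{(\frac{n}{n-1},\infty)}$ into $L^1$. Theorem \ref{thm:holder-lorentz} as stated requires $p<\infty$ with $p=1$ excluded, so we would prove this pairing directly. Writing the kernel as $v(z)=|x-z|^{1-n}$, its rearrangement is $v^\ast(t)=c_n t^{-\frac{n-1}{n}}$, and the rearrangement inequality gives
\[
\int|\nabla g|\,v\le c_n\int_0^\infty (\nabla g)^\ast(t)\,t^{\frac1n}\frac{dt}{t}=c_n[\nabla g]_{L^{(n,1)}}.
\]
This computation is exactly the Lorentz quasi-norm \eqref{eq:quasinorm} with $(p,q)=(n,1)$, so the argument closes.
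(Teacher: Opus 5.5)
Your proof is correct. The paper gives no argument of its own and only cites Bennett--Sharpley, O'Neil and Stein, where the proof is essentially yours: you bound $|g(x)-g_B|$ by the Riesz-type potential $\int_B|\nabla g(z)|\,|x-z|^{1-n}\,dz$, pair $L^{(n,1)}$ against $|\cdot|^{1-n}\in L^{(\frac{n}{n-1},\infty)}$ via the rearrangement inequality, and then pass to general $f$ by mollification. One simplification in that last step: applying your oscillation bound to $f_\varepsilon-f_{\varepsilon'}$ already shows that $f_\varepsilon-f_\varepsilon(0)$ is uniformly Cauchy on $\mathbb{R}^n$, so the Arzel\`a--Ascoli and equi-integrability detour is not needed.
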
 

The corresponding proofs can be found in \cite{bennett, neil, stein}.

\vspace{0.7cm}
\section{Construction of a Gauge $P \in W^{1,(n,q)}(B^n,\mathrm{SO}(m))$.}
The following Lemma provides a suitable gauge that, due to the better integrability assumption of
$\Omega \in L^{(n,q)}$ compared to the gauge in \cite{conservation-law-conf-invariant}, also inherits better integrability. The proof strategy remains the same as in \cite{conservation-law-conf-invariant} and \cite{struwe-riviere}. 

\begin{lemma}\label{lm:constructing-gauge-p-original-space}
 Let $1 \leq q \leq 2$. There exists $\varepsilon_0>0$ and $C>0$ such that for any $\Omega \in  L^{(n,q)}(B^n, \mathfrak{so}(m)\otimes \wedge^1)$  satisfying 
  \begin{align}
      \Vert \Omega \Vert_{\lorentzl(B^n)} \leq C \varepsilon_0, \label{eq:smallness-for-gauge}
  \end{align}
    then there exists $P \in W^{1,(n,q)}(B^n,\mathrm{SO}(m))$ and $\xi \in W^{1,(n,q)}(B^n, \mathfrak{so}(m) \otimes \wedge^{n-2})$ such that 
    \begin{equation} \begin{aligned}
        \begin{cases}
            \ast d\xi = P^{-1} dP + P^{-1} \Omega P & \text{ in } B^n, \\
            d\ast\xi = 0 & \text{ in } B^n, \\
            \xi = 0 & \text{ on } \partial B^n.
        \end{cases} \label{eq:gauge-system}
    \end{aligned} \end{equation} 
    Moreover, the following a--priori bound holds: 
    \begin{equation} \begin{aligned}
       \Vert dP \Vert_{L^{(n,q)}(B^n)} + \Vert d \xi \Vert_{L^{(n,q)}(B^n)} \leq C \Vert \Omega \Vert_{L^{(n,q)}(B^n)} \leq C \varepsilon_0. \label{eq:inequality-gauge-original}
    \end{aligned} \end{equation} 
\end{lemma}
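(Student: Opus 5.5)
We follow Uhlenbeck's continuity method in the form used in \cite{conservation-law-conf-invariant} and \cite{struwe-riviere}. We work with the Lorentz norm in place of $L^2$ (respectively $L^n$) throughout.

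\textbf{Setting up the continuity method.} For $\varepsilon_0$ small, define $\mathcal U_{\varepsilon}$ as the set of $\Omega\in L^{(n,q)}(B^n,\mathfrak{so}(m)\otimes\wedge^1)$ with $\Vert\Omega\Vert_{L^{(n,q)}}\le \varepsilon$. We set
$$V=\{\Omega\in\mathcal U_{\varepsilon}:\ \text{there exist } P,\xi \text{ solving \eqref{eq:gauge-system} with } \Vert dP\Vert_{L^{(n,q)}}+\Vert d\xi\Vert_{L^{(n,q)}}\le 2C\Vert\Omega\Vert_{L^{(n,q)}}\}.$$
It suffices to show that $V$ is nonempty, open and closed in the path-connected set $\mathcal U_\varepsilon$; star-shapedness along $t\Omega$ gives connectedness. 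By density of smooth forms, we may first work with smooth $\Omega$ and pass to the limit at the end using closedness. Clearly $0\in V$, with $P=\mathrm{Id}$ and $\xi=0$.

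\textbf{A priori estimate (the key step, giving the factor $C$ rather than $2C$).} Suppose $(P,\xi)$ solves \eqref{eq:gauge-system} with the bound $2C\Vert\Omega\Vert$. Applying $d$ to $\ast$ of the first equation, together with $d\ast\xi=0$ and $\xi=0$ on $\partial B^n$, gives the elliptic system
$$\Delta \xi \simeq \ast\big(dP^{-1}\wedge dP\big)+d\big(P^{-1}\Omega P\big)$$
with zero boundary data. Standard Calderón–Zygmund theory in Lorentz spaces (via Theorem \ref{thm:young-ineq} for the Riesz potentials) bounds the linear part by $\Vert d\xi\Vert_{L^{(n,q)}}\lesssim\Vert\Omega\Vert_{L^{(n,q)}}$, since $|P|=1$. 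The quadratic Jacobian part is handled by a Wente-type estimate: $dP^{-1}\wedge dP$ is a sum of Jacobians. Using Theorem \ref{thm:holder-lorentz} and the $L^{(n/2,q/2)}$-to-$W^{1,(n,q)}$ mapping of $\Delta^{-1}d$, or more crudely Hölder with $P\in L^\infty$, we obtain a bound of the form $\Vert dP\Vert_{L^{(n,q)}}^2$. Here $q\le 2$ matters: the product of two $L^{(n,q)}$ functions lies in $L^{(n/2,q/2)}$, and the Sobolev embedding then returns to $L^{(n,q)}$ for the gradient. From the equation itself, $dP=P\ast d\xi-\Omega P$, so
$$\Vert dP\Vert_{L^{(n,q)}}\le \Vert d\xi\Vert_{L^{(n,q)}}+\Vert\Omega\Vert_{L^{(n,q)}}.$$
Combining these, $\Vert d\xi\Vert\le C_1\Vert\Omega\Vert+C_1(\Vert d\xi\Vert+\Vert\Omega\Vert)^2$. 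Smallness of $\varepsilon_0$ absorbs the quadratic term and gives \eqref{eq:inequality-gauge-original} with constant $C$. This is where I expect the main obstacle: the correct Lorentz-space elliptic estimate for the quadratic term with Dirichlet data on the ball. I would first try extending by reflection or zero and using Riesz-potential bounds from Theorem \ref{thm:young-ineq}.

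\textbf{Closedness.} Take $\Omega_k\to\Omega$ in $L^{(n,q)}$ with gauges $(P_k,\xi_k)$ satisfying the bound. These are bounded in $W^{1,(n,q)}\subset W^{1,n}$ (for $q\le n$), so weak limits exist and strong $L^p$ convergence holds by Rellich. The limit $P$ is $\mathrm{SO}(m)$-valued by a.e. convergence, and the nonlinear terms $P_k^{-1}dP_k$ and $P_k^{-1}\Omega_kP_k$ converge distributionally. Lower semicontinuity of the Lorentz norm preserves the bound.

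\textbf{Openness.} Given $\Omega_0\in V$ with gauge $(P_0,\xi_0)$, write $P=P_0e^{U}$ with $U\in W^{1,(n,q)}$ (or $W^{2,p}$ in the smooth setting) $\mathfrak{so}(m)$-valued and small. Impose the Coulomb condition $d^\ast\big(P^{-1}dP+P^{-1}\Omega P\big)=0$ with Neumann condition on $\partial B^n$. The linearization in $U$ at $(\Omega_0,0)$ is $-\Delta U+\text{(small terms)}$, which is invertible on mean-zero maps with Neumann data. The implicit function theorem then gives solutions for $\Omega$ near $\Omega_0$. Hodge decomposition produces $\xi$ with $\ast d\xi$ equal to that co-closed form, $d\ast\xi=0$ and $\xi=0$ on the boundary. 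The a priori estimate of the previous step yields the improved constant, so $\Omega\in V$.
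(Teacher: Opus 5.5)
Your skeleton is the same as the paper's: Uhlenbeck's continuity method, the splitting of $\Delta\xi$ into $\ast(dP^{-1}\wedge dP)$ and $\ast d(P^{-1}\Omega P)$, absorption via $dP=P\ast d\xi-\Omega P$, and a final approximation. Your critical a priori estimate is fine, and the obstacle you flag there (Lorentz Calder\'on--Zygmund with Dirichlet data) is standard. The real gap is the functional setting of the openness step.

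\textbf{Openness in $L^{(n,q)}$ fails.} As written, you prove openness and closedness inside the $L^{(n,q)}$-ball $\mathcal U_\varepsilon$, so openness would have to come from the implicit function theorem with $U\in W^{1,(n,q)}$. For $q>1$ the space $W^{1,(n,q)}$ does not embed into $L^\infty$. The $U$-derivative of $d^\ast\big(e^{-U}de^{U}+e^{-U}(\ast d\xi_0+\lambda)e^{U}\big)$ then produces products such as $V\lambda$ and $V\,dU$ with $V\in W^{1,(n,q)}$ unbounded, and these do not lie in $L^{(n,q)}$. So the map is not $C^1$ between these spaces. Moreover, wherever the implicit function theorem does apply, it only gives solvability for perturbations that are small in the stronger norm of the parameter space. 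It therefore never yields openness in the $L^{(n,q)}$ topology.

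\textbf{Your fallback needs one more ingredient.} Working with smooth $\Omega$ and $U\in W^{2,p}$ is the right repair, and it is what the paper does in Lemma~\ref{lm:better-regularity-space}: the continuity method runs on $\{\Omega\in W^{1,(p,1)}:\|\Omega\|_{L^{(n,q)}}\le\varepsilon\}$ with the $W^{1,(p,1)}$ topology, $\frac n2<p<n$. But then you must guarantee that the implicit function theorem can be restarted at every point produced by closedness. Your closedness argument only yields a limit gauge in $W^{1,(n,q)}$. There are two ways to fix this.
\begin{itemize}
\item The paper proves the second a priori bound \eqref{eq:apriori-bound-better-lemma1}, $\|dP\|_{W^{1,(p,1)}}+\|d\xi\|_{W^{1,(p,1)}}\le C\|\Omega\|_{W^{1,(p,1)}}$, so that limit gauges stay in $W^{2,(p,1)}$. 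It comes from the same splitting $\xi=v+w$ via $\|\Delta v\|_{L^{(p,1)}}\lesssim\|dP\|_{L^{(n,q)}}\|dP\|_{L^{(\frac{np}{n-p},q')}}\lesssim\delta\|dP\|_{W^{1,(p,1)}}$ and $\|\Delta w\|_{L^{(p,1)}}\lesssim\|\Omega\|_{W^{1,(p,1)}}(1+\|dP\|_{L^{(n,q)}})$ (Lemma~\ref{lm:lemma-to-conclude}).
\item Alternatively, you could check that the implicit function step only needs $dP_0\in L^{(n,q)}$ and $d\xi_0$ small in $L^{(n,q)}$. The identity $d^\ast(\ast d\xi_0)=0$ removes every derivative of $d\xi_0$ from $d^\ast(e^{-U}\ast d\xi_0\,e^{U})$. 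Also, $\lambda\mapsto P_0^{-1}\lambda P_0$ preserves $W^{1,(p,1)}$ when $dP_0\in L^{(n,q)}$.
\end{itemize}
One of the two has to be supplied. Once existence with the uniform $L^{(n,q)}$ bound is known for all regular $\Omega$, your $L^{(n,q)}$-closedness argument is exactly the paper's concluding mollification step.

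A minor point: for $n\ge3$ the Jacobian term needs no Wente-type compensation. H\"older plus Calder\'on--Zygmund suffice, and that step does not actually use $q\le2$.
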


We follow the approach of Uhlenbeck in \cite[Lemma 2.7, Lemma 2.8]{uhlenbeck}. As a first step, we prove the Lemma for a slightly more regular space, see Lemma  \ref{lm:better-regularity-space}. This makes the map $\lambda \mapsto P^{-1} \lambda P$ smooth. Using an approximation argument, we derive Lemma \ref{lm:constructing-gauge-p-original-space} as a consequence of Lemma \ref{lm:better-regularity-space}.

\begin{proof}[Proof of Lemma \ref{lm:constructing-gauge-p-original-space}.] 
   We will show that Lemma \ref{lm:constructing-gauge-p-original-space} follows directly from Lemma \ref{lm:better-regularity-space}. 

Let $\Omega \in L^{(n,q)}(B^n,\mathfrak{so}(m)\otimes \wedge^1)$. We assume that $\Vert \Omega \Vert_{L^{(n,q)}(B^n)} < \varepsilon_0$ for some sufficiently small $\varepsilon_0 > 0$ and fix $\varepsilon_0$ later.

Let $\frac{n}{2}<p<n$ as in Lemma \ref{lm:better-regularity-space} and recall that then $W^{1,(p,1)}\hookrightarrow L^{(n,q)}$. 
Due to the density of $C_c^\infty(B^n)$ in $L^{(n,q)}$, there exists a sequence $(\Omega_k)_{k\in \mathbb{N}} $ in $ W^{1,(p,1)}(B^n)$ such that 
\[
\Omega_k \to \Omega \quad \text{strongly in } L^{(n,q)}. 
\]
We can construct this sequence using standard mollifiers, setting $\Omega_k \coloneqq \eta_k \ast \Omega$. By Young's convolution inequality, we have:
\[
\Vert \Omega_k \Vert_{L^{(n,q)}} \leq \Vert \eta_k \Vert_{L^1} \Vert \Omega \Vert_{L^{(n,q)}} = \Vert \Omega \Vert_{L^{(n,q)}} < C\varepsilon_0.
\]
By choosing $C\varepsilon_0 \leq \varepsilon(n,m)$, we ensure that $\Vert \Omega_k \Vert_{L^{(n,q)}} < \varepsilon(n,m)$ for all $k$, satisfying the smallness assumption required for Lemma \ref{lm:better-regularity-space}.

Applying Lemma \ref{lm:better-regularity-space} to each $\Omega_k$, there exist sequences $P_k \in W^{2,(p,1)}(B^n,\mathrm{SO}(m))$ and $\xi_k \in W^{2,(p,1)}(B^n, \mathfrak{so}(m) \otimes \wedge^{n-2})$ satisfying the gauge system \eqref{eq:gauge-system-better}, as well as the uniform bounds \eqref{eq:apriori-bound-better-lemma1} and \eqref{eq:apriori-bound-better-lemma2}. 

Because the bounds in \eqref{eq:apriori-bound-better-lemma2} are uniform with respect to $k$, we can extract weakly converging subsequences (the index is again denoted by $k$) such that $P_k \rightharpoonup P$ and $\xi_k \rightharpoonup \xi$ weakly in $W^{2,(p,1)}$. Passing to the limit as $k \to \infty$, we conclude the proof of Lemma \ref{lm:constructing-gauge-p-original-space}.
\end{proof}

\vspace{0.7cm}
\section{Proof of Theorem \ref{thm:main-thm} –  an "almost" Jacobian structure.} \label{sec:version1}

\begin{proof}[Proof of Theorem \ref{thm:main-thm}] \label{proof:proof-of-main-theorem}

Since continuity is a local property, it suffices to prove continuity on the ball $B_{1-\tau_0}(0)$ for an arbitrary $\tau_0 \in (0,1)$. Since the $L^{(n,q)}$-norm is absolute continuous, we can find a uniform radius $r_0 > 0$ such that for any $r < r_0$ and any $x \in B_{1-\tau_0}(0)$, we have $B_r(x) \subset B^n$ and
\begin{align}
    \Vert \Omega \Vert_{\lorentzl (B_r(x))}\leq C\varepsilon_0. \label{eq:assumption-ball}
\end{align}
During the proof, it is possible to adopt $\varepsilon_0$ as small as is necessary.
By Lemma \ref{lm:constructing-gauge-p-original-space}, this yields a local gauge $P \in \lorentzw(B_r(x), \mathrm{SO}(m))$ and $\xi \in \lorentzw(B_r(x), \mathfrak{so}(m)\otimes \wedge^{n-2})$ for which \eqref{eq:gauge-system} holds on $B_r(x)$.

Applying this gauge transformation to $du$, we deduce:
\begin{equation} \label{eq:almost-div-curl}
\begin{aligned}
 - d^\ast\!(P^{-1} d u) &= - d P^{-1} \cdot du + P^{-1} \Omega du \\
 &= (P^{-1} d P + P^{-1} \Omega P) P^{-1} du \\
 &= \ast d\xi \cdot P^{-1} d u,
\end{aligned}
\end{equation} 
where we used the identity $d(P^{-1}P) = 0$, which yield $-d P^{-1} P = P^{-1} d P$. In analogy with \cite{struwe-riviere}, equation \eqref{eq:almost-div-curl} defines an ``almost'' Jacobian structure.

\vspace{0.5cm}

In this approach, we apply a duality argument inspired by \cite[Thm. 3.5]{lecture-notes-tristan} and \cite[p. 459]{struwe-riviere}. Let $\delta \in (0,1)$ and consider an arbitrary 1-form $\phi \in \lorentzone (B_{\delta r}(x), \wedge^1)$ with $\Vert \phi \Vert_{L^{(n,1)}(B_{\delta r}(x))}\leq 1$. Further, define the extension by zero $\widetilde{\phi} \coloneqq \phi \chi_{B_{\delta r}(x)}$, so that $\Vert \widetilde{\phi} \Vert_{L^{(n,1)}(B_r(x))} \leq 1$. 
\vspace{0.2cm}

Consider the Hodge decomposition (see \cite[Cor. 10.5.1]{iwaniec-hodge-book} for further details)
\begin{equation}
   \widetilde{\phi} = d\varphi + d^\ast \psi + \mathfrak{h} \qquad \text{in } B_r(x),
\end{equation}
where $d\varphi \in dW_T^{1,(n,1)}(B_r(x))$, $d^\ast \psi \in d^\ast W_N^{1,(n,1)}(B_r(x), \wedge^{2})$,\footnote{This notation was introduced in \cite[p.~38]{iwaniec-hodge}. \\
$dW_T^{1,(r,s)}(B_r(x)) \coloneqq \{ d\varphi \mid \varphi \in W^{1,(r,s)}(B_r(x)), \; \varphi|_{\partial B_r(x)} = 0 \}$, \\
$d^\ast W_N^{1,(r,s)}(B_r(x),\wedge^2) \coloneqq \{ d^\ast\psi \mid \psi \in W^{1,(r,s)}(B_r(x), \wedge^2), \; \ast\psi|_{\partial B_r(x)} = 0 \}$.} and $\mathfrak{h} \in C^\infty(B_r(x), \wedge^1)$ is a harmonic 1-form satisfying $d\mathfrak{h}=0$, $d^\ast \mathfrak{h}=0$. Moreover, it holds:
\begin{align}
    \Vert d\varphi \Vert_{L^{(n,1)}(B_r(x))} + \Vert d^\ast \psi \Vert_{L^{(n,1)}(B_r(x))} + \Vert \mathfrak{h} \Vert_{L^{(n,1)}(B_r(x))} \leq C \Vert \widetilde{\phi} \Vert_{L^{(n,1)}(B_r(x))} \leq C. \label{eq:estimate-by-one}
\end{align}
We note briefly that this extension is required to ensure interior estimates for the harmonic part of $\widetilde{\phi}$ at a later stage. In addition, it enables us to combine all individual components in the final step.

\vspace*{0.5cm}

\begin{center}
    \textsc{Morrey decay estimate for $\Vert \nabla u \Vert_{\lorentzdual(B_{\delta r}(x))}$} 
\end{center}

Since $P \in \mathrm{SO}(m)$, the norm is preserved, i.e., $|P^{-1} \nabla u| = |\nabla u|$. Using the duality between $\lorentzone$ and $\lorentzdual$, we can write:
\begin{equation} \label{eq:containing-step-by-step}
\begin{aligned}
\Vert \nabla u \Vert_{\lorentzdual(B_{\delta r}(x))} 
&= \Vert P^{-1} \nabla u \Vert_{\lorentzdual(B_{\delta r}(x))} \\
&= \sup_{ \substack{ \phi \in \lorentzone(B_{\delta r}(x), \wedge^1) \\ \Vert \phi \Vert_{\lorentzone(B_{\delta r}(x))} \leq 1}} \int_{B_{\delta r}(x)} P^{-1} du \cdot \phi \; dvol_n.
\end{aligned} 
\end{equation} 

Fix an arbitrary test form $\phi \in \lorentzone(B_{\delta r}(x),\wedge^1)$ with $\Vert \phi \Vert_{\lorentzone(B_{\delta r}(x))} \leq 1$ with the extension $\widetilde{\phi}$ on $B_r(x)$ as explained above. Substituting the Hodge decomposition $\widetilde{\phi} = d\varphi + d^\ast \psi + \mathfrak{h}$, we obtain:
\begin{equation}
\begin{aligned}
\int_{B_{\delta r}(x)} P^{-1} du \cdot \phi \; dvol_n 
&= \int_{B_r(x)} P^{-1} du \cdot \widetilde{\phi} \; dvol_n \\
&= \int_{B_r(x)} P^{-1} du \cdot (d\varphi + d^\ast \psi + \mathfrak{h}) \; dvol_n. 
\end{aligned}
\end{equation}

We will estimate each of these three terms separately. In the last step, we take the supremum over all such $\phi \in \lorentzone(B_{\delta r}(x),\wedge^1)$ and we conclude by combining them.

\vspace{0.5cm}

\textbf{Estimate containing $d\varphi$.}

Define $\displaystyle \bar{u}_{B_r(x)} \coloneqq \fint_{B_r(x)} u$. 
Using Stokes' Theorem and the fact that $\varphi|_{\partial B_r(x)}=0$, the boundary term vanishes, yields:

\begin{equation} \label{eq:containing-dvarphi}
\begin{aligned}
&\int_{B_r(x)} d\varphi \wedge \ast P^{-1} du 
= \int_{B_r(x)} d(\varphi  \ast P^{-1}du) - \int_{B_r(x)} \varphi  d(\ast P^{-1} du) \\
&= \int_{B_r(x)} d \xi \wedge (\varphi P^{-1}) du \\
&= (-1)^{n-1}\int_{B_r(x)} d \big( d\xi  \varphi P^{-1} (u- \bar{u}_{B_r(x)}) \big) -  \int_{B_r(x)} d\xi \wedge d(\varphi P^{-1}) (u-\bar{u}_{B_r(x)}) \\
&= - \int_{B_r(x)} d\xi \wedge d(\varphi P^{-1}) (u-\bar{u}_{B_r(x)}) 
\end{aligned}
\end{equation}
This implies in particular  
\begin{equation}
\begin{aligned}
\Bigg\vert \int_{B_r(x)} d\varphi \wedge \ast P^{-1} du  \Bigg\vert &\lesssim \Vert d\xi \Vert_{L^{(n,q)}(B_r(x))} \Big( \Vert d\varphi \Vert_{L^{(n,q')}}  + \Vert \varphi \Vert_{L^\infty} \Vert dP\Vert_{L^{(n,q')}}\Big) \\ 
& \hspace{4cm} \cdot \Vert u- \bar{u}_{B_r(x)} \Vert_{L^{(\frac{n}{n-2},\infty)}} \\
&\leq C'\varepsilon_0 (1+ C\varepsilon_0) \Vert \nabla u\Vert_{\lorentzdual(B_r(x))}, 
\end{aligned}
\end{equation}
where we used $d(d\xi)=0$,  $ \lorentzwone \hookrightarrow  W^{1,(n,q)}\hookrightarrow W^{1,(n,q')}$ due to 
\begin{equation}
    \begin{aligned}
        q \leq q' \quad \Leftrightarrow \quad  \frac{1}{q} \geq \frac{1}{q'}=1-\frac{1}{q} \quad \Leftrightarrow \quad 1 \leq q \leq 2, \label{eq:motivation-q-q-prime}
    \end{aligned}
\end{equation}
the embedding $\lorentzwone \hookrightarrow L^\infty$ (see Prop. \ref{prop:continuity}), estimate \eqref{eq:inequality-gauge-original} together with \eqref{eq:assumption-ball}, and the Lorentz-Hölder exponent relations (see Cor. \ref{cor:improved-sobolev}):
\begin{equation}
\frac{1}{n} + \frac{1}{n} + \frac{n-2}{n} = 1, \qquad \frac{1}{q} + \frac{1}{q'} + \frac{1}{\infty} = 1.
\end{equation}

\vspace*{0.5cm}

\textbf{Estimate containing $d^\ast \psi$.}

Analogously to the above, using Stokes' Theorem together with the boundary condition $\ast \psi |_{\partial B_r(x) }=0$, we obtain:
\begin{equation} \label{eq:containing-dpsi}
\begin{aligned}
 \int_{B_r(x)} P^{-1} &du \wedge d\ast \psi    
 = \int_{B_r(x)} \Big(d(P^{-1}(u-\bar{u}_{B_r(x)})) - dP^{-1}(u-\bar{u}_{B_r(x)}) \Big) \wedge d\ast \psi \\
 &= -\int_{B_r(x)} d \Big( d\big( P^{-1} (u-\bar{u}_{B_r(x)}) \big) \wedge \ast \psi \Big) - \int_{B_r(x)} dP^{-1}(u-\bar{u}_{B_r(x)}) \wedge d\ast \psi \\
 &= -\int_{B_r(x)} dP^{-1}(u-\bar{u}_{B_r(x)}) \wedge d\ast \psi.
\end{aligned}
\end{equation}
 This implies in particular 
 \begin{equation}
     \begin{aligned}
     \Bigg\vert  \int_{B_r(x)} P^{-1} du \wedge d\ast \psi    \Bigg\vert 
 &\lesssim \Vert dP \Vert_{L^{(n,q)}(B_r(x))}\Vert u-\bar{u}_{B_r(x)} \Vert_{L^{(\frac{n}{n-2},\infty)}(B_r(x))} \Vert d^\ast \psi \Vert_{L^{(n,q')}(B_r(x))} \\
 &\overset{\eqref{eq:poincare_lorentz_critical}}{\leq} C''\varepsilon_0 \Vert \nabla u\Vert_{\lorentzdual(B_r(x))},
\end{aligned}
\end{equation}
where we used the embedding (see Theorem  \ref{thm:holder-lorentz})
\begin{equation}
    \begin{aligned}
        L^{(n,q)} \cdot L^{(n,q')} \hookrightarrow L^{(n/2,1)}.
    \end{aligned}
\end{equation}

\begin{remark}
We briefly explain why the assumption $\Omega \in L^{(n,q)}$ is indeed necessary and optimal. In \eqref{eq:containing-dpsi}, it would actually be enough to assume only $dP \in L^n$, because $d^\ast \psi \in L^{(n,\frac{n}{n-1})}$ could compensate for this weaker integrability (recall that $d^\ast \psi \in L^{(n,1)}$). However, the stronger assumption becomes essential in \eqref{eq:containing-dvarphi}. There, if we only had $d\xi \in L^n$, we would need $d(\varphi P) \in L^{(n,\frac{n}{n-1})}$, which requires $dP \in L^{(n,\frac{n}{n-1})}$–a condition that not be satisfied in general.
\end{remark}

\vspace*{0.5cm}

\textbf{Estimate containing $\mathfrak{h}$.}

Since $P \in \mathrm{SO}(m)$, we have:
\begin{equation} \label{eq:first-estimate-h}
   \Bigg\vert  \int_{B_r(x)} P^{-1}du \wedge\ast \mathfrak{h} \Bigg\vert \lesssim \Vert du \Vert_{\lorentzdual(B_r(x))} \Vert \mathfrak{h}\Vert_{L^{(n,1)}(B_r(x))}.
\end{equation}
In order to apply interior estimates for harmonic forms, we exploit the fact that we have extended the test function $\phi$ to $\widetilde{\phi}$, and we claim that for  $\delta \in (0,\frac{3}{4})$ we have:
\begin{align}
   \Vert \mathfrak{h} \Vert_{L^{(n,1)}(B_r(x))} \leq C \delta^{n-1}. \label{eq:second-estimate-claim}
\end{align}

\begin{proof}[Proof of the Claim]
By a duality argument, we have:
\begin{equation}
    \Vert \mathfrak{h} \Vert_{L^{(n,1)}(B_r(x))} = \sup_{ \substack{ k \in \lorentzdual (B_r(x)) \\ \Vert k \Vert_{\lorentzdual(B_r(x)) } \leq 1 }} \int_{B_r(x)} \mathfrak{h} \cdot k \; dvol_n. \label{eq:step1-of-claim}
\end{equation}
Consider an arbitrary but fixed $k = df + d^\ast g + \mathfrak{H} \in \lorentzdual(B_r(x))$ with $df \in dW_T^{1,(\frac{n}{n-1},\infty)}(B_r(x))$, $d^\ast g \in d^\ast W_N^{1, (\frac{n}{n-1},\infty)}(B_r(x), \wedge^2)$, and $\mathfrak{H} \in C^\infty(B_r(x),\wedge^1)$ and satisfy 
\begin{equation}
\begin{aligned}
    \Vert df \Vert_{L^{(\frac{n}{n-1},\infty)}(B_r(x))}& + \Vert d^\ast g \Vert_{L^{(\frac{n}{n-1},\infty)}(B_r(x)} + \Vert \mathfrak{H} \Vert_{L^{(\frac{n}{n-1},\infty)}(B_r(x)} \\
    &\leq C \Vert k \Vert_{L^{(\frac{n}{n-1},\infty)}(B_r(x)} 
    \leq C. \label{eq:estimate-by-one-new}
    \end{aligned}
\end{equation}

Since $\widetilde{\phi}$ is supported in $B_{\delta r}(x)$ with $\Vert \widetilde{\phi}\Vert_{L^{(n,1)}(B_r(x))} \leq 1$, orthogonality yields:
\begin{equation} 
\begin{aligned}
    \int_{B_r(x)} \mathfrak{h} \cdot (df + d^\ast g + \mathfrak{H}) \; dvol_n  
    &= \int_{B_r(x)} \mathfrak{h} \cdot \mathfrak{H} \; dvol_n  \\
    &= \int_{B_r(x)} \widetilde{\phi} \cdot \mathfrak{H} \; dvol_n  
    = \int_{B_{\delta r}(x)} \widetilde{\phi} \cdot \mathfrak{H} \; dvol_n \\
    &\leq \Vert \widetilde{\phi} \Vert_{L^{(n,1)}(B_{\delta r}(x))} \Vert \mathfrak{H} \Vert_{\lorentzdual(B_{\delta r}(x))} \\
    &\leq \Vert \mathfrak{H} \Vert_{\lorentzdual(B_{\delta r}(x))}. \label{eq:step2claim}
\end{aligned}
\end{equation}

Since $\mathfrak{H}$ is harmonic, the following function is monotone increasing for any $1 \leq p< \infty$ \footnote{This is a direct consequence of that fact that for $1 \leq p < \infty$, we have $\Delta \vert \mathfrak
H\vert^p \geq 0$ and hence $\vert \mathfrak{H}\vert^p$ is subharmonic. For details see \ref{lm:lemma-of-harmonic}. }
\begin{align}
    r \mapsto \frac{1}{r^n}\int_{B_r(x)} \vert \mathfrak{H} \vert^{p}. \label{eq:monoton}
\end{align}

Following the strategy in \cite[p. 145]{lecture-notes-tristan}, we choose $\delta \in (0,1)$ independent of $r$ such that $\delta r < \frac{3}{4} r$ holds, i.e., $\delta \in (0,\frac{3}{4})$, which yields

\begin{equation} \label{eq:comparable}
\begin{aligned}
    \left( \frac{1}{\delta r}\right)^n \int_{B_{\delta r}(x)} \vert \mathfrak{H} \vert^p 
    &\leq \left( \frac{1}{3r/4}\right)^n \int_{B_{3r/4}(x)} \vert \mathfrak{H} \vert^p  \\[1ex]
    \Leftrightarrow  \int_{B_{\delta r}(x)} \vert \mathfrak{H} \vert^p 
    &\leq \left( \frac{4\delta}{3}\right)^n \int_{B_{3r/4}(x)} \vert \mathfrak{H} \vert^p . 
\end{aligned}
\end{equation}

Because $\mathfrak{H}$ is harmonic, all local $L^p$ norms are comparable by interior estimates and the mean-value formula. By choosing $1 < p < \frac{n}{n-1}$, we have $L^{(\frac{n}{n-1}, \infty)}(B_r(x)) \hookrightarrow L^p(B_r(x))$ on  $B_r(x)$. Using $\Vert \mathfrak{H} \Vert_{\lorentzdual(B_r(x))} \leq \Vert k\Vert_{\lorentzdual (B_r(x))} \leq 1$, we obtain:
\begin{equation} \label{eq:second-estimate-H}
\begin{aligned}
    \Vert \mathfrak{H} \Vert_{\lorentzdual(B_{\delta r}(x))} 
    &\lesssim (\delta r)^{\frac{n(p-1)}{p}-1} \Vert \mathfrak{H} \Vert_{L^{p}(B_{\delta r}(x))} 
    \\
    &\overset{\eqref{eq:comparable}}{\leq} C(n,p) \delta^{\frac{n}{p}} (\delta r)^{\frac{n(p-1)}{p}-1} \Vert \mathfrak{H} \Vert_{L^{p}(B_r(x))} \\
    &\overset{p<\frac{n}{n-1}}{\leq} C(n,p) \delta^{\frac{n}{p}} (\delta r)^{\frac{n(p-1)}{p}-1} \Vert \mathfrak{H} \Vert_{\lorentzdual(B_r(x))} \Vert 1 \Vert_{L^{(\frac{np}{n-p(n-1)},p)} (B_r(x))} \\
    &\le C(n,p) \delta^{\frac{n}{p}} (\delta r)^{\frac{n(p-1)}{p}-1} r^{\frac{n-(n-1)p}{p}} \Vert \mathfrak{H} \Vert_{\lorentzdual(B_r(x))}  \\
    &= C(n,p) \delta^{n-1} \Vert \mathfrak{H} \Vert_{\lorentzdual(B_r(x))}.
\end{aligned}
\end{equation}
where we used Hölder inequality for the parameters (recall $p<\frac{n}{n-1}$)
\begin{equation}
    \frac{1}{p} = \frac{n-1}{n} + \frac{1}{\frac{np}{n-p(n-1)}}, \qquad \frac{1}{p} = \frac{1}{\infty} + \frac{1}{p}.
\end{equation}
Combining \eqref{eq:step1-of-claim}, \eqref{eq:step2claim}, \eqref{eq:second-estimate-H} and taking the supremum over $k \in \lorentzdual(B_r(x))$ completes the proof of the claim.
\end{proof}

Combining \eqref{eq:first-estimate-h}  with \eqref{eq:second-estimate-claim}  then yields
\begin{equation} \label{eq:containing-h}
   \Bigg\vert  \int_{B_r(x)} P^{-1} du \wedge \ast \mathfrak{h} \Bigg\vert \leq C''' \delta^{n-1} \Vert \nabla u\Vert_{\lorentzdual (B_r(x))}. 
\end{equation}

\vspace*{0.5cm}
\textbf{The final bound.} 

Combining the estimates \eqref{eq:containing-dvarphi}, \eqref{eq:containing-dpsi}, and \eqref{eq:containing-h}, taking the supremum over all $\phi \in \lorentzone(B_{\delta r}(x),\wedge^1)$ with $\Vert \phi \Vert_{\lorentzone(B_{\delta r}(x))} \leq 1$, and insert into \eqref{eq:containing-step-by-step}, we end up with:
\begin{align}
    \Vert \nabla u\Vert_{\lorentzdual(B_{\delta r}(x))} \leq \Big( \varepsilon_0 (C'+C'') + \delta^{n-1}C''' \Big) \Vert \nabla u\Vert_{\lorentzdual(B_r(x))}.
\end{align}

Fix an arbitrary exponent $\alpha \in (0, n-1)$. Since $n - 1 - \alpha > 0$, we can choose $\delta \in (0, 3/4)$ small enough such that
\begin{equation} \label{eq:def-of-delta}
    C''' \delta^{n-1} \leq \frac{1}{2} \delta^\alpha \quad \Leftrightarrow  \quad \delta \leq \left( \frac{1}{2C'''} \right)^{\frac{1}{n-1-\alpha}}. 
\end{equation}
With $\delta \in (0, 3/4)$ now fixed, we choose $\varepsilon_0 > 0$ sufficiently small such that
\begin{equation}
    \varepsilon_0 (C' + C'') \leq \frac{1}{2} \delta^\alpha. \label{eq:eps-morrey}
\end{equation}
Combining \eqref{eq:def-of-delta} and \eqref{eq:eps-morrey} yields the following decay:
\begin{equation} \label{eq:one-iteration}
    \Vert \nabla u\Vert_{\lorentzdual(B_{\delta r}(x))} \leq \delta^\alpha \Vert \nabla u\Vert_{\lorentzdual(B_r(x))}. 
\end{equation}

\vspace*{0.5cm}

\textbf{Iteration argument.} 

Let $0 < r < r_0$ be arbitrary. Then there exists $k \in \mathbb{N}_0$ such that 

\begin{equation} \label{eq:iteration}
    \delta^{k+1} r_0 \leq r < \delta^k r_0 \qquad \Leftrightarrow \qquad  \delta^{k+1} \leq \frac{r}{r_0} < \delta^k.  
\end{equation}
Iterating \eqref{eq:one-iteration} $k$ times yield:
\begin{equation} \label{eq:iteration-1}
\begin{aligned}
   \Vert \nabla u\Vert_{\lorentzdual (B_{r }(x))} 
   &\leq \Vert \nabla u\Vert_{\lorentzdual(B_{\delta^{k} r_0}(x))} \leq \delta^{\alpha k} \Vert \nabla u \Vert_{\lorentzdual(B_{r_0}(x)) } \\
   &\leq \delta^{\alpha (k+1)} \frac{1}{\delta^\alpha }\Vert \nabla u \Vert_{\lorentzdual(B_{r_0}(x)) } \\
   &\overset{\eqref{eq:iteration}}{\leq} \left( \frac{r}{r_0} \right)^{\alpha } \frac{1}{\delta^\alpha } \Vert \nabla u \Vert_{\lorentzdual(B_{r_0}(x)) } \\
   &\overset{\eqref{eq:def-of-delta}}{=} \left( \frac{r}{r_0} \right)^{\alpha } (2C''')^{\frac{\alpha}{n-1-\alpha}} \Vert \nabla u \Vert_{\lorentzdual(B_{r_0}(x)) }. 
\end{aligned}
\end{equation}

Taking the supremum over $0 < r < r_0$ and $x \in B_{1-\tau_0}(0)$, we obtain for any $\alpha \in (0, n-1)$ a constant $C(n, \alpha, r_0) > 0$ such that
\begin{align}
    \sup_{\substack{x \in B_{1-\tau_0}(0) \\ 0 < r < r_0}} r^{-\alpha }\Vert \nabla u\Vert_{\lorentzdual (B_{r}(x))} \leq C(n, \alpha, r_0) < \infty, \label{eq:morrey-space-estimate}
\end{align}
where $C(n,\alpha,r_0) \coloneqq (2C''')^{\frac{\alpha}{n-1-\alpha}} r_0^{-\alpha} \Vert \nabla u\Vert_{L^{(\frac{n}{n-1},q')}(B^n)}$ and we used $L^{(\frac{n}{n-1},q')} \subset L^{(\frac{n}{n-1},\infty)}$.

\vspace*{0.5cm}
\textbf{Conclusion: Continuity via Morrey space techniques.}  \label{step:conclusion-morrey}

For any $x \in B_{1-\tau_0}(0)$ and $0 < r < r_0$, Hölder's inequality together with \eqref{eq:morrey-space-estimate} yields:
\begin{equation}
\begin{aligned}
    \Vert \nabla u\Vert_{L^1(B_r(x))} 
    &\lesssim \Vert \nabla u \Vert_{L^{(\frac{n}{n-1},\infty)}(B_r(x))} \Vert 1 \Vert_{L^{(n,1)}(B_r(x))} \\
    &\leq \tilde{C} r^\alpha \cdot r = \tilde{C} r^{\alpha+1}, \label{eq:estimate-morrey}
\end{aligned}
\end{equation}
where we used that $\Vert 1 \Vert_{L^{(n,1)}(B_r(x))}\lesssim r$. 

Since \eqref{eq:estimate-morrey} holds for any $\alpha < n-1$, choosing $n-2 < \alpha < n-1$ and defining $\mu \coloneqq \alpha - n + 2 \in (0,1)$, the decay estimate can be rewritten as:
\begin{equation}
    \Vert \nabla u\Vert_{L^1(B_r(x))} \leq \tilde{C} r^{n-1+\mu}.\label{eq:morrey-growth-final}
\end{equation}

By Morrey's Dirichlet Growth Theorem \cite[Thm. 3.5.2]{morrey}, \eqref{eq:morrey-growth-final} implies Hölder continuity on the domain $B_{1-\tau_0}(0)$, i.e., $u \in C^{0,\mu}(B_{1-\tau_0}(0))$. 

Since $\tau_0 \in (0,1)$ was arbitrary, we conclude that $u \in C^{0,\mu}_{\text{loc}}(B^n)$, completing the proof of Theorem \ref{thm:main-thm}.

\end{proof}

\vspace{0.7cm}

\section{Proof of Theorem \ref{thm:main-theorem-2} –  a "full" Conservation law.} \label{sec:version2}

The entire section follows the strategy of Rivière in \cite[Thm. I.1, I.3, I.4]{conservation-law-conf-invariant}.

\begin{theorem} \label{thm:existence-a-b}
      There exists $\varepsilon_0>0$ and $C_0>0$ such that for any $\Omega \in  L^{(n,q)}(B^n, \mathfrak{so}(m)\otimes \wedge^1)$  satisfying 
  \begin{align}
      \Vert \Omega \Vert_{\lorentzl} \leq C \varepsilon_0, \label{eq:smallness-a-b-gauge}
  \end{align}
    then there exists $A \in L^\infty \cap W^{1,(n,q)}(B^n,GL(m))$ and $B \in W^{1,(n,q)}(B^n, \R^{m\times m} \otimes \wedge^{2})$ satisfying 
    \begin{equation} \begin{gathered}
    \begin{cases}
  \Vert  \operatorname{dist}(A, \mathrm{SO}(m)) \Vert_{L^\infty(B^n)}  \leq C_0 \Vert \Omega \Vert_{\lorentzl (B^n)}, \\  \\ 
       \Vert d^\ast B \Vert_{L^{(n,q)}}  \leq C_0 \Vert \Omega \Vert_{L^{(n,q)}}, \\ \\
      -d^\ast B = dA -A\Omega. \label{eq:apriori-a-b}
      \end{cases}
    \end{gathered} \end{equation} 
\end{theorem}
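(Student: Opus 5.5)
We follow Rivière's original construction. First we apply Lemma \ref{lm:constructing-gauge-p-original-space} to obtain $P\in W^{1,(n,q)}(B^n,\mathrm{SO}(m))$ and $\xi\in W^{1,(n,q)}(B^n,\mathfrak{so}(m)\otimes\wedge^{n-2})$ with $\ast d\xi = P^{-1}dP+P^{-1}\Omega P$ and $\Vert dP\Vert_{L^{(n,q)}}+\Vert d\xi\Vert_{L^{(n,q)}}\le C\Vert\Omega\Vert_{L^{(n,q)}}$. We look for $A=(\mathrm{Id}+\widehat A)P^{-1}$ with $\widehat A$ small in $L^\infty\cap W^{1,(n,q)}$. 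Since $dP^{-1}=-P^{-1}dP\,P^{-1}$, a short computation gives
\[
dA-A\Omega = d\widehat A\,P^{-1} - (\mathrm{Id}+\widehat A)\,\ast d\xi\,P^{-1}.
\]
Hence the equation $-d^\ast B=dA-A\Omega$ becomes, after right multiplication by $P$,
\[
d\widehat A - \ast d\xi - \widehat A\,\ast d\xi = -d^\ast B\, P .
\]
Taking $d^\ast$ (equivalently $d\ast$) of both sides and using $d\ast\xi=0$, $d(d\xi)=0$, we are led to the coupled elliptic system
\[
\Delta \widehat A = \ast\big(d\widehat A\wedge d\xi\big) + d^\ast B\cdot dP,\qquad
\Delta B = \ast\big(dP^{-1}\wedge d\widehat A\big)+ d\big(P^{-1}\widehat A\ast d\xi\big) \text{-type terms},
\]
up to signs and Hodge stars. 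We pose $\widehat A=0$ on $\partial B^n$ (or Neumann-type data as in \cite{conservation-law-conf-invariant}) and $B$ with vanishing normal/tangential component. Every right-hand side is then either a Jacobian $\ast(da\wedge db)$ with one factor in $W^{1,(n,q)}$ and the other in $W^{1,(n,q')}$, or a product of a gradient with $dP,d\xi\in L^{(n,q)}$.

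The key steps are as follows. First, establish the higher-dimensional Wente estimate \eqref{eq:high-wente}: if $\Delta\varphi=\ast(da\wedge db)$ with $\varphi=0$ on $\partial B^n$, then $\Vert\varphi\Vert_{L^\infty}+\Vert d\varphi\Vert_{L^{(n,1)}}\lesssim\Vert da\Vert_{L^{(n,q)}}\Vert db\Vert_{L^{(n,q')}}$. We would prove it by extending $a,b$ to $\mathbb R^n$ and noting that $\ast(da\wedge db)$ lies in the Hardy-type space $\mathcal H^{(n/2,1)}$. Concretely, we write $\nabla^2\varphi$ as a Calderón–Zygmund operator applied to the Jacobian, use the CLMS commutator structure to get $\nabla^2\varphi\in L^{(n/2,1)}$ via the Lorentz–Hölder pairing $\frac1q+\frac1{q'}=1$, and then apply Corollary \ref{cor:improved-sobolev} and Proposition \ref{prop:continuity}. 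Second, set up a fixed-point (contraction) argument for $(\widehat A,B)$ in the space
\[
X=\{\widehat A\in L^\infty\cap W^{1,(n,q)}\}\times\{B\in W^{1,(n,q)}\}
\]
with norm $\Vert\widehat A\Vert_{L^\infty}+\Vert d\widehat A\Vert_{L^{(n,q)}}+\Vert dB\Vert_{L^{(n,q)}}$. The source term $\ast d\xi$ enters linearly and gives $\Vert d^\ast B\Vert_{L^{(n,q)}}\lesssim\Vert d\xi\Vert_{L^{(n,q)}}\lesssim\Vert\Omega\Vert_{L^{(n,q)}}$. The bilinear terms are bounded by $C\varepsilon_0\Vert(\widehat A,B)\Vert_X$, using the Wente estimate for the Jacobians and Lorentz Hölder/Calderón–Zygmund estimates for the rest, so for $\varepsilon_0$ small the map is a contraction. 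Third, read off the conclusions. $\operatorname{dist}(A,\mathrm{SO}(m))\le|\widehat A|\le C_0\Vert\Omega\Vert_{L^{(n,q)}}$, so $A$ is invertible with values in $GL(m)$. Also $A\in W^{1,(n,q)}$ because $dA=d\widehat A\,P^{-1}-(\mathrm{Id}+\widehat A)P^{-1}dP\,P^{-1}$.

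The main obstacle is the $L^\infty$ control of $\widehat A$. The term $d\widehat A\cdot\ast d\xi$ only lies in $L^{(n/2,\cdot)}$, and it gives $L^\infty$ for $\widehat A$ only through its Jacobian structure. Exactly here we need $q\le 2$, so that $W^{1,(n,q)}\hookrightarrow W^{1,(n,q')}$ and both factors fit the Wente estimate, as in \eqref{eq:motivation-q-q-prime}. A secondary difficulty is that the equation obtained after applying $d^\ast$ must be shown to imply the original first-order identity $-d^\ast B=dA-A\Omega$. To handle this we would define $B$ through the Hodge decomposition of $d\widehat A-(\mathrm{Id}+\widehat A)\ast d\xi$ on the ball: its exact part is forced to vanish by the $\widehat A$-equation together with the boundary conditions, and its harmonic part is killed by the boundary conditions. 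If this fails, we would instead first prove the result for smooth $\Omega_k$ (as in Lemma \ref{lm:better-regularity-space}) and pass to weak limits using the uniform bounds.
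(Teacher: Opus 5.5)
Your plan is essentially the paper's proof. It uses the gauge $(P,\xi)$ from Lemma~\ref{lm:constructing-gauge-p-original-space}, the ansatz $A=(\mathrm{Id}_m+\widehat A)P^{-1}$, the coupled elliptic system for $(\widehat A,B)$ obtained by applying $d^\ast$ and $d$, and a fixed point for small $\Vert\Omega\Vert_{L^{(n,q)}}$. It ends with a check that the closed $1$-form $Z:=(\mathrm{Id}_m+\widehat A)\ast d\xi\,P^{-1}-d^\ast B-d\widehat A\,P^{-1}$ vanishes. However, the boundary condition you list first makes the recovery step fail, and your stated main obstacle is not actually an obstacle.

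\textbf{The boundary condition for $\widehat A$ is not a free choice.} Write $Z=dC$. The $\widehat A$-equation then gives $d^\ast(dC\,P)=0$. One concludes $dC=0$ from the absorption
$\Vert dC\Vert_{L^{(n,1)}}\lesssim\Vert dC\Vert_{L^{(n,q')}}\Vert dP\Vert_{L^{(n,q)}}\lesssim\varepsilon_0\Vert dC\Vert_{L^{(n,1)}}$,
and this requires homogeneous boundary data for $C$.

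\begin{itemize}
\item Since $\xi=0$ on $\partial B^n$, the normal component of $\ast d\xi$ vanishes there.
\item Since $B=0$ (or just its normal part vanishes), the normal component of $d^\ast B$ vanishes there.
\item Hence $\partial_\nu C=-\partial_\nu\widehat A\,P^{-1}$ on $\partial B^n$.
\end{itemize}

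So you need the Neumann condition $\partial_\nu\widehat A=0$, as in the paper and in Rivi\`ere, together with a normalisation of the mean of $\widehat A$. Combined with $d\widehat A\in L^{(n,1)}$, that normalisation gives the $L^\infty$ bound. With Dirichlet data $\widehat A=0$, neither the normal nor the tangential component of $Z$ vanishes on $\partial B^n$ in general. Then $C$ is an uncontrolled solution of a homogeneous equation, and the identity $-d^\ast B=dA-A\Omega$ is lost. Smoothing $\Omega$ does not help, because the obstruction lies in the boundary data, not in regularity.

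Two related corrections to your recovery step:
\begin{itemize}
\item The Neumann problem kills the whole closed form $Z$ at once, not its exact and harmonic parts separately.
\item Any Hodge decomposition must be taken of $(d\widehat A-(\mathrm{Id}_m+\widehat A)\ast d\xi)P^{-1}$, not of $d\widehat A-(\mathrm{Id}_m+\widehat A)\ast d\xi$. Right multiplication by $P$ does not preserve co-exactness.
\end{itemize}

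\textbf{No compensation is needed for the $L^\infty$ bound.} We have $\frac1q+\frac1{q'}=1$, and $L^{(n,q)}\subset L^{(n,q')}$ for $q\le 2$. Lorentz--H\"older alone therefore puts $d\widehat A\cdot\ast d\xi$ and $d^\ast B\cdot dP$ in $L^{(\frac n2,1)}$. Then $W^{2,(\frac n2,1)}\hookrightarrow W^{1,(n,1)}\hookrightarrow L^\infty$ finishes the argument, exactly as in the paper. The CLMS/Hardy-space route is superfluous: for $\frac n2>1$ the relevant Hardy--Lorentz space coincides with $L^{(\frac n2,1)}$. The restriction $q\le 2$ enters only through this H\"older pairing.
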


\begin{proof}
    Since \eqref{eq:smallness-a-b-gauge} is satisfied, we  apply again Lemma \ref{lm:constructing-gauge-p-original-space} and we follow the existence of $P \in W^{1,(n,q)}(B^n, \mathrm{SO}(m))$, $\xi \in W^{1,(n,q)}(B^n, \mathfrak{so}(m)) \otimes \wedge^{n-2})$ which satisfies the system \eqref{eq:gauge-system} as well as the a-priori bound:
    \begin{equation}
        \begin{aligned}
            \Vert d\xi \Vert_{L^{(n,q)}(B^n)} + \Vert dP \Vert_{L^{(n,q)}(B^n)} \leq C\Vert \Omega\Vert_{\lorentzl(B^n)}. \label{eq:xi-p-for-a-b}
        \end{aligned}
    \end{equation}
    We make the ansatz: 
    \begin{equation}
        \begin{aligned}
            A\coloneqq (\mathrm{Id}_m+\sigma)  P^{-1} \qquad \Rightarrow 
            dA=d \sigma P^{-1} + (I+ \sigma)dP^{-1}, \label{eq:polar-decompositon-ansatz}
        \end{aligned}
    \end{equation}
   for some $\sigma \in L^\infty \cap W^{1,(n,1)}(B^n, \R^{m\times m})$ with $\Vert \sigma \Vert_{L^\infty} <1$. A direct consequence of the assumption is that $A$ is invertible due to Neumann series. 
   This assumption will be justified later, once the a-priori bounds \eqref{eq:apriori-a-b} have been established. 

  We follow the strategy of Rivière  in \cite[Thm. I.4]{conservation-law-conf-invariant}. (Note that he constructs $(\mathrm{Id}_m+\sigma)$  by finding $\tilde{A}$ and $\sigma$ takes the role of $\hat{A}$ there.)
   \vspace{0.5cm}

We seek for $A$ and $B$ satisfying \eqref{eq:apriori-a-b}. Using ansatz \eqref{eq:polar-decompositon-ansatz} then yields
\begin{equation}
\begin{aligned}
   -d^\ast B= dA -A \Omega , \quad 
   &\Leftrightarrow \quad  -d^\ast B \overset{\eqref{eq:polar-decompositon-ansatz}}{=} d\sigma P^{-1} +(\mathrm{Id}_m+\sigma)dP^{-1} - (\mathrm{Id}_m + \sigma)P^{-1} \Omega \\
   & \Leftrightarrow \quad  -d^\ast B P =d\sigma  -(\mathrm{Id}_m+\sigma) \underbrace{( -dP^{-1} P +P^{-1} \Omega P )}_{=\ast d\xi }  \\
   & \Leftrightarrow \quad  -d^\ast B P =d\sigma  -(\mathrm{Id}_m+\sigma) \ast d\xi. \label{eq:equivalent-db}
\end{aligned}
\end{equation}
Applying the $d^\ast (\cdot )$-operator leads to  
\begin{equation}
    \begin{aligned}
        -\Delta \sigma + (-1)^{n-1} \ast (d\sigma \wedge d\xi) = d^\ast B \cdot dP. \label{eq:pde-for-sigma}
    \end{aligned}
\end{equation}

Note that \eqref{eq:equivalent-db} is multiplying again by $P^{-1}$ equivalent to
\begin{align}
    (I+\sigma) \ast d\xi P^{-1}= d^\ast B + d\sigma P^{-1},
\end{align}
and hence by applying the $d(\cdot)$ operator (and recalling that $d(d\sigma P^{-1}) = -d\sigma \wedge dP^{-1}= -d(\sigma dP^{-1})$) this leads to the following system
\begin{align}
    d(\ast d\xi P^{-1}) + d(\sigma \ast d\xi P^{-1})  = \Delta B -d(\sigma dP^{-1}). \label{eq:pde-for-b}
\end{align}

Combining \eqref{eq:pde-for-sigma} and \eqref{eq:pde-for-b},  we  then look for $\sigma \in W^{1,(n,q)}(B^n,\R^{m\times m})$ and $B\in W^{1,(n,q)}(B^n, \R^{m\times m}, \wedge^2)$ which solves 
\begin{equation}
    \begin{aligned}
    \begin{cases}
          -\Delta \sigma =(-1)^n \ast (d\sigma \wedge d\xi) + d^\ast B \cdot dP & \text{ in }B^n, \\
       \Delta B  =   d(\ast d\xi P^{-1}) + d(\sigma \ast d\xi P^{-1})   +d(\sigma dP^{-1})  & \text{ in }B^n, \\
       \partial_\nu \sigma =0, \qquad B=0 & \text{ on } \partial B^n, \\
      \displaystyle \int_{B^n} \sigma = \mathrm{Id}_m.
    \end{cases} \label{eq:system-to-solve-fixpoint}
    \end{aligned}
\end{equation}

For the sake of simplicity, we omit the domain $B^n$. 
Standard elliptic estimates $W^{1,(\frac{n}{2},1)}\hookrightarrow L^{(n,1)}$ (see again Theorem \ref{cor:improved-sobolev}), and Hölder's inequality gives: 
\begin{equation}
    \begin{aligned}
        \Vert d\sigma \Vert_{L^{(n,1)}} 
        &\lesssim  \Vert \Delta \sigma \Vert_{L^{(\frac{n}{2},1)}} \lesssim \Vert d\sigma \Vert_{L^{(n,q')}} \Vert d\xi \Vert_{L^{(n,q)}} +  \Vert d^\ast  B \Vert_{L^{(n,q')}} \Vert dP \Vert_{\lorentzl} \\
        & \overset{\eqref{eq:xi-p-for-a-b}}{\leq}  C' \Vert \Omega \Vert_{\lorentzl}  \Vert d\sigma \Vert_{L^{(n,q')}}  + C' \Vert \Omega \Vert_{\lorentzl}\Vert d^\ast B\Vert_{L^{(n,q')}} ,
    \end{aligned}
\end{equation}
together with $W^{1,(n,1)}\hookrightarrow L^\infty$ and $L^{(n,q)}\subset L^{(n,q')}$ due to $2\leq q \leq q'$, we have
\begin{equation}
    \begin{aligned}
        \Vert d^\ast B & \Vert_{L^{(n,q')}}  \lesssim   \Vert d^\ast B  \Vert_{L^{(n,q)}}
       \lesssim \Vert   d\xi P^{-1} \Vert_{\lorentzl} + \Vert \sigma \ast d\xi P^{-1} \Vert_{\lorentzl}  + \Vert  \sigma  dP^{-1} \Vert_{\lorentzl} \\
        &\quad  \lesssim \Vert   d\xi  \Vert_{\lorentzl} \Vert P^{-1} \Vert_{L^\infty} + \Vert \sigma  \Vert_{L^\infty}\Vert d\xi  \Vert_{\lorentzl} \Vert P^{-1} \Vert_{L^\infty}  + \Vert  \sigma \Vert_{L^\infty} \Vert  dP^{-1} \Vert_{\lorentzl} \\
        & \quad \leq C'' \Vert \Omega \Vert_{\lorentzl} + C''\Vert d\sigma \Vert_{L^{(n,1)}} \Vert \Omega \Vert_{\lorentzl}.
    \end{aligned}
\end{equation}

Choosing $\varepsilon_0$ small enough, by a fix-point argument we obtain the existence of $B$ and $\sigma$, and hence also $A$,  such that
\begin{equation}
    \begin{aligned}
        \Vert d\sigma \Vert_{L^{(n,1)}} + \Vert \sigma \Vert_{L^\infty } + \Vert d^\ast B \Vert_{\lorentzl } \leq C_0 \Vert \Omega \Vert_{\lorentzl}. \label{eq:equation-for-sigma-1}
    \end{aligned}
\end{equation}
Using \eqref{eq:polar-decompositon-ansatz}, $\sigma =AP-\mathrm{Id}_m$, estimate \eqref{eq:equation-for-sigma-1} then leads to
\begin{equation}
    \begin{aligned}
        \Vert AP-\mathrm{Id}_m \Vert_{L^\infty(B^n)} + \Vert d^\ast B \Vert_{\lorentzl (B^n) } \leq  C_0 \Vert \Omega \Vert_{\lorentzl (B^n)}.\label{eq:a-priori-A-final-bound}
    \end{aligned}
    \end{equation}
    Again using the ansatz $A=(\mathrm{Id}_m+\sigma)P^{-1}$, estimate \eqref{eq:equation-for-sigma-1} further leads to
    \begin{equation}
    \begin{aligned}
        \Vert dA \Vert_{L^{(n,q)}} &\lesssim \Vert d\sigma P^{-1} \Vert_{L^{(n,q)}}+ (1+ \Vert  \sigma \Vert_{L^\infty}) \Vert dP^{-1} \Vert_{L^{(n,2)}} \\
         &\overset{1 \leq q}{\lesssim} \Vert d\sigma \Vert_{L^{(n,1)}}+ C'\Vert \Omega\Vert_{L^{(n,q)}} + C'' \Vert \Omega\Vert_{L^{(n,q)}}^2 \\
        &\lesssim C \Vert \Omega \Vert_{L^{(n,q)}}
    \end{aligned}
    \end{equation}
    for a small enough $\varepsilon_0>0$.
    It remains to discuss the bound for $dA^{-1}$. We have $AA^{-1}= \mathrm{Id}_m$ and hence 
    \begin{equation}
        \begin{aligned}
            dA A^{-1 }+ AdA^{-1}=0 \quad \Rightarrow \quad dA^{-1}= - A^{-1}dAA^{-1}. \label{eq:a-minus-one}
        \end{aligned}
    \end{equation}
    Equation \eqref{eq:a-minus-one} leads to 
    \begin{equation}
        \begin{aligned}
            \Vert dA^{-1} \Vert_{\lorentzl} \lesssim \Vert A^{-1} \Vert_{L^\infty}^2 \Vert dA \Vert_{\lorentzl}.  
        \end{aligned}
    \end{equation}
    If we show that $A^{-1}$ is  bounded in $L^\infty$, we can conclude by using \eqref{eq:a-priori-A-final-bound}. Since the Neumann series is well-defined by assumption, we have:
    \begin{align}
        A^{-1}= \big( (\mathrm{Id}_m+\sigma)P^{-1})^{-1} = P\sum _{k=0}^\infty (-\sigma )^{k}. \label{eq:neumanseries}
    \end{align}
Choosing $\varepsilon_0<\frac{1}{2C_0}$, \eqref{eq:neumanseries} is absolute-convergent and we get
\begin{align}
    \sum_{k=0}^\infty \Vert \sigma \Vert_{L^\infty}^{k} = \frac{1}{1-\Vert \sigma \Vert_{L^\infty}} \leq \frac{1}{1-  C_0\varepsilon_0}  \overset{\text{choice of }\varepsilon_0}{\leq} 2 <\infty, \label{eq:a-inverse-bounded}
\end{align}

\vspace{0.5cm}

It remains to show that the equality $-d^\ast B= dA-A\Omega$ holds.

\vspace{0.5cm}
Since $B$ is a solution of \eqref{eq:system-to-solve-fixpoint}, $(\mathrm{Id}_m+\sigma)\ast d\xi P^{-1}- d^\ast B - d\sigma P^{-1 }$ is exact, and hence there exists a $C \in \lorentzw(B^n)$ such that
    \begin{equation}
        \begin{aligned}
           dC &= (\mathrm{Id}_m+\sigma)\ast d\xi P^{-1}- d^\ast B - d\sigma P^{-1 }, \\
           \Leftrightarrow \quad   dCP &= (\mathrm{Id}_m+\sigma)\ast d\xi - d^\ast BP - d\sigma.
        \end{aligned} \label{eq:introduction-c}
    \end{equation}
Applying the $d^\ast(\cdot)$-operator on \eqref{eq:introduction-c}, one gets 
\begin{equation}
    \begin{aligned}
        \Delta C P + dC \cdot dP &= \ast ( d\sigma \wedge  d\xi )- d^\ast B \cdot dP - \Delta \sigma \overset{\eqref{eq:system-to-solve-fixpoint}}{=} 0. \label{eq:c-solves-intro}
    \end{aligned}
\end{equation}
 We want to show that $C=0$, from which \eqref{eq:system-to-solve-fixpoint} follows immediately.

Applying $P^{-1}$ on both sides of \eqref{eq:c-solves-intro} and using $dPP^{-1} = -P dP^{-1}$, $C$ solves
\begin{equation}
    \begin{aligned}
        \begin{cases}
            \Delta C = dC \cdot P dP^{-1} & \text{ in } B^n, \\
            C=0 & \text{ on } \partial B^n.
        \end{cases} \label{eq:pde-for-c}
    \end{aligned}
\end{equation}
Using once more the Calderón-Zygmund estimates, we get
\begin{equation}
    \begin{aligned}
        \Vert d C \Vert_{L^{(n,q)}} 
        &\overset{1\leq q}{\lesssim} \Vert d C \Vert_{L^{(n,1)}} \lesssim \Vert \Delta C  \Vert_{L^{(\frac{n}{2},1)}}  \lesssim \Vert dC \Vert_{L^{(n,q)}} \Vert P \Vert_{L^\infty} \Vert dP \Vert_{L^{(n,q')}} \\
        &\lesssim  \varepsilon_0 \Vert d C \Vert_{L^{(n,q)}},
    \end{aligned}
\end{equation}
and hence $\nabla C \equiv 0$. Due to the zero boundary conditions in \eqref{eq:pde-for-c}, we conclude $C=0$.
This finishes the proof of Theorem \ref{thm:existence-a-b}. 

\end{proof}

We now give the continuity proof of solutions $u\in W^{1,(\frac{n}{n-1},q')}(B^n)$ to system \eqref{eq: model-pde}. 

\vspace{0.5cm}

\begin{proof}[Proof of Theorem \ref{thm:main-theorem-2}.]
We argue analogously as in the proof of Section \ref{sec:version1}.

Since continuity is a local property, it suffices to prove continuity on the ball $B_{1-\tau_0}(0)$ for an arbitrary $\tau_0 \in (0,1)$. Since the $L^{(n,q)}$-norm is absolutely continuous, we can find a uniform radius $R_0 > 0$ such that for any $R < R_0$ and any $x \in B_{1-\tau_0}(0)$, we have $B_R(x) \subset B^n$ and $\Omega \in L^{(n,q)}(B^n, \mathfrak{so}(m) \otimes \wedge^1)$ satisfies the smallness threshold given in Theorem \ref{thm:existence-a-b}:
\begin{align}
    \Vert \Omega \Vert_{\lorentzl (B_R(x))}\leq C\varepsilon_0. \label{eq:assumption-ball-a-b}
\end{align}
During the proof, it is possible to adopt $\varepsilon_0$ as small as is necessary.
Theorem \ref{thm:existence-a-b} yields the existence of $A \in L^\infty \cap \lorentzw(B_R(x),\mathrm{GL}(m))$ and $B \in \lorentzw(B_R(x), \mathfrak{so}(m)\otimes \wedge^{2})$ such that 
\begin{align}
    -d^\ast B = dA -A \Omega \qquad \text{ in }B_R(x).  
\end{align}
The conservation law claimed in  \eqref{eq:conservation-law} then follows by a direct computation: 
     \begin{equation} \label{eq:conservation-law-ab}
        \begin{aligned}
            d(\ast A du ) &= dA \wedge \ast du + A \ast \Delta u 
            = dA\wedge \ast du - A\Omega \wedge \ast du \\
            &\overset{\eqref{eq:def-b-star}}{=} -d^\ast B \wedge \ast du 
            = -(-1)^{n+1} \ast d\ast B \wedge \ast du \\
            &= (-1)^n \ast d\ast B \wedge \ast du 
            = -(-1)^{n-1} d\ast B \wedge du \\
            &= - (-1)^{n-1}d( (\ast B)\wedge du ).
        \end{aligned}
    \end{equation}

Next, by a Hodge decomposition there exist $d\alpha \in dW_T^{1,(\frac{n}{n-1},2)}(B_R(x))$, 
$d^\ast \beta \in d^\ast W_N^{1,(\frac{n}{n-1},2)}(B_R(x) ,  \wedge^{n-2})$, and $\mathfrak{h}\in C^\infty(B_R(x),\wedge ^1)$ such that
\begin{equation} \begin{aligned}
\begin{cases}
   A du= d \alpha + \ast d \beta + \mathfrak{h} &\text{ in }B_R(x), \\
   \alpha =\ast\beta=0 &\text{ on } \partial B_R(x),\\
   \mathfrak{h}=Adu &\text{ on } \partial B_R(x).
   \end{cases} \label{eq:hodge-decomps}
\end{aligned} \end{equation}
Using \eqref{eq:conservation-law-ab}, $\alpha$ and $\beta$ then solve:
\begin{equation} \begin{aligned}
\begin{cases}
   - \Delta \alpha = d^\ast (A du) = d^\ast B \cdot du & \text{ in } B_R(x), \\
    \alpha = 0 & \text{ on } \partial B_R(x),
    \end{cases} \label{eq:system-alpha}
\end{aligned} \end{equation}
and 
\begin{equation} \begin{aligned}
\begin{cases}
    \Delta \beta = \ast d (A du) = \ast (dA \wedge du )& \text{ in } B_R(x), \\
    \ast \beta = 0 & \text{ on } \partial B_R(x).
    \end{cases} \label{eq:system-beta}
\end{aligned} \end{equation}

Using again the endpoint interpolation of Riesz potentials together with $L^{(\frac{n}{n-1},1)}\hookrightarrow L^{(\frac{n}{n-1},q')}$ ($q' \geq 2$), we get the existence of a constant $C>0$ such that
\begin{equation}
    \begin{aligned}
        \Vert d\alpha \Vert_{L^{(\frac{n}{n-1},q')}(B_R(x))} &\lesssim \Vert d^\ast B \Vert_{\lorentzl (B_R(x)) } \Vert du \Vert_{L^{(\frac{n}{n-1},q')}(B_R(x))} \\
        &\leq C \varepsilon_0 \Vert du \Vert_{L^{(\frac{n}{n-1},q')}(B_R(x))}, \\
        \Vert d^\ast \beta \Vert_{L^{(\frac{n}{n-1},q')}(B_R(x))} &\lesssim \Vert dA\Vert_{\lorentzl (B_R(x)) } \Vert du \Vert_{L^{(\frac{n}{n-1},q')}(B_R(x))} \\
        &\leq C \varepsilon_0 \Vert du \Vert_{L^{(\frac{n}{n-1},q')}(B_R(x))}.
    \end{aligned}
\end{equation}

\textbf{Morrey space decay estimate.}

Let $\delta \in (0,\frac{3}{4})$. We follow the strategy of the proof in Section \ref{sec:version1}. Using the fact that $A\in\mathrm{GL}(m)$, \eqref{eq:hodge-decomps} can be rewritten as $du = A^{-1} d\alpha + A^{-1}\ast d \beta + A^{-1}\mathfrak{h}$, we get:

\begin{equation} \begin{aligned}
    \Vert &\nabla u \Vert_{L^{(\frac{n}{n-1},q')}(B_{\delta R}(x))}  \\
    &\lesssim \Vert A^{-1} \Vert_{L^\infty(B_{\delta R})} \Big( \Vert d\alpha \Vert_{L^{(\frac{n}{n-1},q')}(B_{\delta R}(x))} + \Vert d^\ast \beta \Vert_{L^{(\frac{n}{n-1},q')}(B_{\delta R}(x))} + \Vert \mathfrak{h} \Vert_{L^{(\frac{n}{n-1},q')}(B_{\delta R}(x))} \Big) \\
    &\overset{(\star )}{\lesssim} \Vert d\alpha \Vert_{L^{(\frac{n}{n-1},q')}(B_R(x))} + \Vert d^\ast \beta \Vert_{L^{(\frac{n}{n-1},q')}(B_R(x))} + \delta^{n-1}\Vert \mathfrak{h} \Vert_{L^{(\frac{n}{n-1},q')}(B_R(x))} \\
    & \overset{\eqref{eq:hodge-decomps}}{\lesssim} \Vert d\alpha \Vert_{L^{(\frac{n}{n-1},q')}(B_R(x))} + \Vert d^\ast \beta \Vert_{L^{(\frac{n}{n-1},q')}(B_R(x))} \\
    &\qquad \; + \delta^{n-1} \Big(\Vert A du \Vert_{L^{(\frac{n}{n-1},q')}(B_R(x))} + \Vert d\alpha \Vert_{L^{(\frac{n}{n-1},q')}(B_R(x))} + \Vert d^\ast \beta \Vert_{L^{(\frac{n}{n-1},q')}(B_R(x))} \Big) \\
    &\lesssim \Bigg( \varepsilon_0 + \delta^{n-1} + \Vert A \Vert_{L^\infty} \delta^{n-1} \Bigg) \Vert \nabla u \Vert_{L^{(\frac{n}{n-1},q')}(B_R(x))} \\
    &\leq C' \Bigg( \delta^{n-1}+ \varepsilon_0 \Bigg)\Vert \nabla u \Vert_{L^{(\frac{n}{n-1},q')}(B_R(x))}.
\end{aligned} \end{equation}
The estimate in ($\star$) needs to be discussed. Again, we use the monotonicity properties of harmonic functions/forms and refer the reader to the Proof in Section \ref{sec:version1}, specifically equation \eqref{eq:comparable}. In that argument, one may choose $p=\frac{n}{n-1}$, since the exponent does not need to be modified again on a larger ball and no further interior estimates are required. 

Note that due to the construction of $A$ in Theorem \ref{thm:existence-a-b}, $A^{-1}$ is bounded for the choice of $\varepsilon_0<\frac{1}{2C_0}$ ( see \eqref{eq:a-inverse-bounded}): 
\begin{equation} \begin{aligned}
    \Vert A^{-1}\Vert_{L^\infty} = \Vert P (\mathrm{Id}_m+ \sigma)^{-1} \Vert_{L^\infty} \lesssim \frac{1}{1-\varepsilon_0}\leq 2. 
\end{aligned} \end{equation}

Fix an arbitrary exponent $\alpha \in (0, n-1)$. Since $n - 1 - \alpha > 0$, we can choose $\delta \in (0, 3/4)$ small enough such that
\begin{equation} \label{eq:def-of-delta-a-b}
  C' \delta^{n-1} \leq \frac{1}{2} \delta^\alpha \quad \Leftrightarrow \quad \delta \leq \left( \frac{1}{2C'} \right)^{\frac{1}{n-1-\alpha}}. 
\end{equation}
With $\delta \in (0, 3/4)$ now fixed, we choose $\varepsilon_0 > 0$ sufficiently small such that
\begin{equation}
   C' \varepsilon_0 \leq \frac{1}{2} \delta^\alpha. \label{eq:eps-morrey-a-b}
\end{equation}

\vspace{0.5cm}

\textbf{Iteration argument.}

Following the iteration argument in Section \ref{sec:version1}, for any $\alpha \in (0, n-1)$ and constant $C(n, \alpha, R_0) > 0$, we end up with:
\begin{align}
    \sup_{\substack{x \in B_{1-\tau_0}(0) \\ 0 < R < R_0}} R^{-\alpha }\Vert \nabla u\Vert_{L^{(\frac{n}{n-1},q')} (B_{R}(x))} \leq C(n, \alpha, R_0) < \infty, \label{eq:morrey-space-estimate-a-b}
\end{align}
where $C(n,\alpha,R_0) \coloneqq (2C')^{\frac{\alpha}{n-1-\alpha}} R_0^{-\alpha} \Vert \nabla u\Vert_{L^{(\frac{n}{n-1},q')}(B^n)}$ and we used $B_R(x) \subset B^n$.

\vspace*{0.5cm}

\textbf{Conclusion: Higher integrability of $\nabla u$ via Morrey-Adams techniques.} 

Let $x \in B_{1-\tau_0}(0)$, $R<R_0$, and $\alpha \in (0,n-1)$. Equation \eqref{eq:morrey-space-estimate-a-b} leads to 
\begin{equation}
    \begin{aligned}
        \int_{B_R(x)} |\Delta u| \lesssim \Vert \Omega \Vert_{L^{(n,q)}(B_R(x))} \Vert \nabla u\Vert_{L^{(\frac{n}{n-1},q')}(B_R(x))} \lesssim \Vert \Omega\Vert_{L^{(n,q)}(B_R(x))} R^{\alpha}.
    \end{aligned}
\end{equation}
Standard estimates by Adams \cite[Prop. 3.2.(ii)]{riesz-adams} then yield $\nabla u \in L_{\mathrm{loc}}^s$ and hence $\nabla u \in L_{\mathrm{loc}}^{(\bar{s},q')}$ for $s>\bar{s}>\frac{n}{n-1}$ due to $L^s \subset L^{(\bar{s},q')}$. 
Indeed, by weak fractional integration, we conclude by Adams
\begin{equation}
    \begin{aligned}
      \Vert \nabla u \Vert _{L^{s}(B_{R/2}(x))} \lesssim \Vert I_1 (|\Delta u|) \Vert_{L^{s}(B_R(x))} < \infty.
    \end{aligned}
\end{equation}
We have thus improved the integrability of the gradient to $\nabla u\in L_{\mathrm{loc}}^{(\bar{s},q')}(B_R(x))$ with $s>\bar{s}>\frac{n}{n-1}$. Consequently, the right-hand side of $-\Delta u = \Omega \cdot \nabla u$ becomes subcritical via Hölder's inequality on Lorentz spaces, $L_{\mathrm{loc}}^{(n,q)} \cdot L_{\mathrm{loc}}^{(\bar{s},q')} \hookrightarrow L_{\mathrm{loc}}^r$ for some $r>1$. By standard Calderón-Zygmund estimates and  bootstrapping arguments, one obtains
\begin{equation}
    \begin{aligned}
        \Delta u \in L_{\mathrm{loc}}^{p} \qquad \text{for every } n>p>\frac{n}{2}.
    \end{aligned}
\end{equation}
Note that the regularity of $\Omega \in L^{(n,q)}$ limits the regularity of $\Delta u$. Indeed, we can bootstrap $\nabla u \in L^s$ for any $s<\infty$ arbitrary large. However, due to the structure of the equation, we have that $p$ is defined by 
\begin{equation}
    \frac{1}{s}+ \frac{1}{n}= \frac{1}{p} \qquad \Leftrightarrow \qquad p =  \frac{ns}{n+p}.
\end{equation}
$p=p(s)$ is monotone increasing in $s$ and hence bounded by $p=p(s) \to n$ for $s\to \infty$. 
Finally, by the Sobolev embedding $W_{\mathrm{loc}}^{2,{p}} (B^n)\hookrightarrow C_{\mathrm{loc}}^{0,\beta}(B^n)$ with $\frac{n}{2}<p<n$ and $\beta \coloneqq 2-\frac{n}{p}$ we obtain Hölder continuity, which finishes the proof. 

\end{proof}

\section{The Optimality of the Result.} \label{sec:counterexample}

\begin{proposition}
Let $n \ge 3$. For every $\varepsilon_0 > 0$, there exist $u \in W^{1\frac{n}{n-1}}(B^n, \mathbb{R}^{n+1})$ and $\Omega \in L^n(B^n, \mathfrak{so}(n+1) \otimes \wedge^1 )$ such that
\begin{equation}
\|\Omega\|_{L^n(B^n)} < \varepsilon_0, \quad -\Delta u = \Omega \cdot \nabla u \quad \text{in } \mathcal{D}'(B^n), \quad 
\end{equation}
but $u$ is unbounded at the origin and hence $u$ has no continuous representative.
\end{proposition}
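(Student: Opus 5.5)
We would build the counterexample from the classical unbounded example u(x) = sin/cos of log log type, mapped into a sphere so that the equation takes the harmonic-map form, which has antisymmetric structure. Concretely, let f(x) := \log\log(e/|x|) (or a small multiple \lambda\log\log(e/|x|)). This f lies in W^{1,n}(B^n): indeed |\nabla f| \sim \frac{1}{|x|\log(e/|x|)}, and \int_0^1 \frac{r^{n-1}dr}{r^n\log^n(e/r)} < \infty. We also have \nabla f \in L^{(n,q)} exactly for q>1, and in particular for q=n>2, but not for q\le 1. The idea is to take u as a map into \mathbb{S}^n\subset\R^{n+1} whose "angle" is f. We take u(x) = \big(\sin f(x)\,\tfrac{x}{|x|},\ \cos f(x)\big), or, simpler, a map into \mathbb{S}^1\subset\R^2\subset \R^{n+1} given by u=(\cos f,\sin f,0,\dots,0). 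In either case u is bounded, so the unbounded claim has to come from a different choice. We therefore prefer a profile unbounded at 0, namely u = g\cdot e where g unbounded is also admissible: take u=(\cos f,\sin f, f, 0,\dots)? The cleanest route is the standard one of Rivière/Frehse: we look for u with |u| bounded but discontinuous, or unbounded with \nabla u \in L^{n/(n-1)}. Since the statement asserts unboundedness, we take u_1 = f = \log\log(e/|x|) as one component (clearly in W^{1,n}\subset W^{1,\frac n{n-1}}) and arrange the remaining components so that \Delta u = -\Omega\nabla u with \Omega antisymmetric.

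The construction of \Omega uses the usual antisymmetrization trick. If -\Delta u = G with G = \sum_j g_j\nabla u\cdot(\dots), and |u| has a structure such as u\cdot\Delta u = -|\nabla u|^2 (sphere-valued), then \Omega^{ij} := u^i\nabla u^j - u^j\nabla u^i satisfies \Omega\cdot\nabla u = u|\nabla u|^2 - \nabla u(u\cdot\nabla u) = u|\nabla u|^2 = -\Delta u. For this we need u harmonic into a sphere, i.e.\ \Delta u = -|\nabla u|^2 u. To get unboundedness with an antisymmetric potential, we use the rotational ansatz u = \rho(|x|)(\cos f,\sin f) in the first two coordinates and fit the radial ODE. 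The general formula is -\Delta u = \Omega\nabla u with \Omega^{12} = -\Omega^{21} = \omega\,dr for a scalar \omega. Writing u = \rho e^{if} in \mathbb{C}, the equation becomes -\Delta(\rho e^{if}) = i\omega\,\partial_r(\rho e^{if}) (the antisymmetric 2\times2 block acts as multiplication by i). Separating real and imaginary parts, with radial \rho and f, gives
\begin{equation*}
-(\Delta\rho - \rho f'^2) = -\omega\rho f', \qquad -(\rho\Delta f + 2\rho' f') = \omega\rho'.
\end{equation*}
We choose \rho unbounded (for example \rho = \log\log(e/r)), solve the second equation for \omega in terms of f, and substitute into the first to get an ODE for f. The aim is \omega \sim \frac{c}{r\log(e/r)}, which lies in L^n with small norm after restricting to a small ball and rescaling; the L^n-norm of \frac{1}{r\log(e/r)} on B_\delta tends to 0 as \delta\to0. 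We also have to check that \Omega \notin L^{(n,2)}, which is consistent with Theorem \ref{thm:main-theorem-2}.

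The main obstacle is solving this ODE system with \omega small in L^n while \rho is unbounded. A cleaner alternative, which we would try first, avoids the ODE. Take u = (\rho, 0,\dots)? In that case \Omega\nabla u would need \Omega^{i1}\partial\rho, and we can add a second component v with \Omega^{12} = a\,dr so that -\Delta\rho = a v' and -\Delta v = -a\rho'. Choosing \rho = \log\log(e/r) and v satisfying v' = -\Delta\rho/a, we then solve -\Delta v = -a\rho' for a consistently; this reduces to a first-order Riccati-type relation that we would verify asymptotically. We would finally verify that the distributional equation holds across the origin: the singular set is a point, it has zero n/(n-1)-capacity, and \Omega\cdot\nabla u \in L^1.
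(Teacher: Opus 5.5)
\textbf{There is a genuine gap.} Your proposal never produces a pair $(u,\Omega)$. It ends with an ODE system and a ``Riccati-type relation that we would verify asymptotically.'' Beyond that, both ans\"atze you intend to pursue provably cannot work. In the rotational ansatz $u=\rho(r)(\cos f(r),\sin f(r))$ and in the alternative $u=(\rho(r),v(r),0,\dots)$, the map $u$ is radial. Hence $\nabla u=\partial_r u\,dr$, and only the radial component $\Omega_r:=\Omega(\partial_r)$ of the potential enters the equation, whatever $\Omega$ is. Take the inner product of $-(\partial_{rr}u+\frac{n-1}{r}\partial_r u)=\Omega_r\partial_r u$ with $\partial_r u$ and use $\partial_r u\cdot\Omega_r\partial_r u=0$, which holds by antisymmetry:
\begin{equation*}
0=\partial_r u\cdot\Big(\partial_{rr}u+\frac{n-1}{r}\partial_r u\Big)=\frac{1}{2r^{2(n-1)}}\frac{d}{dr}\Big(r^{2(n-1)}|\partial_r u|^2\Big).
\end{equation*}
For your second system this is simply $\rho'\Delta\rho+v'\Delta v=-a\rho'v'+av'\rho'=0$. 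Hence $|\nabla u|=c|x|^{1-n}$ exactly. Either $u$ is constant, or $\int_{B^n}|\nabla u|^{\frac{n}{n-1}}=c'\int_0^1 r^{-1}\,dr=\infty$, so $u\notin W^{1,\frac{n}{n-1}}$. Moreover the boundary flux satisfies $|\rho^{n-1}\partial_r u(\rho)|=c\neq0$, so the equation cannot extend across the origin. With $\rho=\log\log(e/r)$ you would be forced to $|v'|\sim c\,r^{1-n}$, i.e.\ a fundamental-solution singularity. So any counterexample must be non-radial, and this is the missing idea. The paper takes $u=(h,g\theta_1,\dots,g\theta_n)$ with $\theta=x/|x|$ and $\Omega_{0k}=-\Omega_{k0}=C\,d\theta_k+D\theta_k\frac{dr}{r}$. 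The tangential part $C\,d\theta_k$, paired with the tangential gradient $g\,d\theta_k$, contributes $(n-1)Cg/r^2$ to $(\Omega\cdot\nabla u)^0$. This is exactly what breaks the first integral above.

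Second, your target decay $\omega\sim\frac{c}{r\log(e/r)}$ is at the wrong scale. Its rearrangement satisfies $s^{1/n}\omega^\ast(s)\sim(\log\frac1s)^{-1}$, so $\omega\in L^{(n,q)}$ for every $q>1$, and in particular $\omega\in L^{(n,2)}$. For $n\ge3$ one has $W^{1,\frac{n}{n-1}}\subset W^{1,(\frac{n}{n-1},2)}$. Localizing to a small ball around $0$ (the $L^{(n,2)}$-norm is absolutely continuous), Theorem \ref{thm:main-theorem-2} would then force $u$ to be continuous at the origin. Your own remark that one must check $\Omega\notin L^{(n,2)}$ is incompatible with this choice. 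The paper works exactly at the borderline. With $t=T+\log\frac1r$ it sets $h=r^{2-n}/t$ and $g=r^{2-n}/t^{3/2}$, so $u^0$ blows up like $r^{2-n}/\log\frac1r$, far more strongly than $\log\log$. Then $D=H/h'$ and $C=(F+g'D)/((n-1)g)$ are determined algebraically, so there is no ODE to solve, and $|C|+|D|\lesssim t^{-1/2}$. This gives the following.
\begin{itemize}
\item $|\Omega|\sim\frac{1}{r\sqrt t}$, which lies in $L^n$ with norm $\lesssim T^{\frac1n-\frac12}\to0$ but not in $L^{(n,2)}$.
\item $|\nabla u|\sim\frac{1}{r^{n-1}t}\in L^{\frac{n}{n-1}}$.
\item $|\Omega||\nabla u|\lesssim\frac{1}{r^n t^{3/2}}\in L^1$.
\end{itemize}
Your final step, removing the point singularity using $\nabla u\in L^{\frac{n}{n-1}}$ and $\Omega\cdot\nabla u\in L^1$, is fine once such a pair is actually in hand.
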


\begin{proof}
We divide the construction and verification into several steps.
\vspace{0.5cm}

\textbf{Step 1. Polar coordinates and elementary computations. } 

Write
\begin{equation}
r := |x| = \sqrt{\sum_{i=1}^n x_i^2}, \quad \theta := \frac{x}{|x|} \in S^{n-1}, \quad t := T + \log \frac{1}{r} = T - \log r, \quad  \label{eq:definition-polar}
\end{equation}
where $T \ge 2$ will be chosen at the end. Differentiating $t$ with respect to $r$ gives $\frac{dt}{dr} = -\frac{1}{r}$, so $dt = -\frac{dr}{r}$.
For $k \in \{1, \dots, n\}$, the functions $\theta_k = x_k/r$ satisfy the following.

\vspace{0.5cm}

Orthogonality $\langle d\theta_k, dr \rangle = 0$:
First, differentiating $r^2 = \sum_{i=1}^n x_i^2$  yields 
\begin{equation}
    2r \, dr = \sum_{i=1}^n 2x_i \, dx_i,
\end{equation}
which leads to
\begin{equation}
dr = \sum_{i=1}^n \frac{x_i}{r} dx_i = \sum_{i=1}^n \theta_i dx_i.
\end{equation}
Since  $\langle dx_i, dx_j \rangle = \delta_{ij}$, we get
\begin{equation}
\langle dx_k, dr \rangle = \left\langle dx_k, \sum_{i=1}^n \theta_i dx_i \right\rangle = \theta_k, \quad \text{and} \quad |dr|^2 = \langle dr, dr \rangle = \sum_{i=1}^n \theta_i^2 = 1.
\end{equation}
Because $d\theta_k = d\left(\frac{x_k}{r}\right) = \frac{dx_k - \theta_k dr}{r}$, we get
\begin{equation}
\langle d\theta_k, dr \rangle = \frac{\langle dx_k, dr \rangle - \theta_k |dr|^2}{r} = \frac{\theta_k - \theta_k(1)}{r} = 0.
\end{equation}

 \vspace{0.5cm}
 
 Further, we have
    \begin{equation} \label{eq:equal-one}
    \sum_{k=1}^n \theta_k^2 = \sum_{k=1}^n \frac{x_k^2}{r^2} = \frac{r^2}{r^2} = 1.
    \end{equation}

\vspace{0.5cm}

 The norm of the gradient satisfies $\displaystyle \sum_{k=1}^n |d\theta_k|^2 = \frac{n-1}{r^2}$.
    Using $|dx_k|^2 = 1$ and $\langle dx_k, dr \rangle = \theta_k$:
    \begin{equation}
    |d\theta_k|^2 = \frac{|dx_k|^2 - 2\theta_k \langle dx_k, dr \rangle + \theta_k^2 |dr|^2}{r^2} = \frac{1 - 2\theta_k^2 + \theta_k^2}{r^2} = \frac{1 - \theta_k^2}{r^2}.
    \end{equation}
    Summing over $k=1, \dots, n$:
    \begin{equation}\label{eq:abs-value-theta-k}
    \sum_{k=1}^n |d\theta_k|^2 = \frac{\sum_{k=1}^n 1 - \sum_{k=1}^n \theta_k^2}{r^2} = \frac{n - 1}{r^2}. \quad 
    \end{equation}

Let $a = a(t)$ be a smooth function, and let $a'$ denote the differentiation with respect to $t$. Using the chain rule with $\partial_r t = -1/r$:
\begin{equation}
\partial_r a = a'(t) \cdot \partial_r t = -\frac{a'}{r}. \quad 
\end{equation}
Differentiating again with respect to $r$ leads to:
\begin{equation}
\partial_{rr} a = \partial_r \left( -\frac{a'}{r} \right) = \frac{a'}{r^2} - \frac{\partial_r (a')}{r} = \frac{a'}{r^2} - \frac{a'' \cdot (-1/r)}{r} = \frac{a'' + a'}{r^2}. \quad 
\end{equation}
Consequently, the Laplacian in spherical coordinates is given by
\begin{equation}\label{eq:2}
\Delta a = \partial_{rr} a + \frac{n-1}{r} \partial_r a = \frac{a'' + a'}{r^2} + \frac{n-1}{r}\left(-\frac{a'}{r}\right) = \frac{1}{r^2} \left[ a'' - (n-2)a' \right]. \quad 
\end{equation}

Moreover, we have:
\begin{equation}\label{eq:3}
d(a\theta_k) = a d\theta_k + \theta_k da = a d\theta_k + \theta_k a' dt = a d\theta_k - a' \theta_k \frac{dr}{r}. \quad 
\end{equation}

Using $\Delta_{S^{n-1}} \theta_k = -(n-1)\theta_k$, and the fact that $\nabla a$ is purely radial while $\nabla \theta_k$ is tangential (so $\langle \nabla a, \nabla \theta_k \rangle = 0$), we get
\begin{equation}\label{eq:4}
\Delta(a\theta_k) = (\Delta a)\theta_k + a (\Delta \theta_k) = \frac{1}{r^2} \left[ a'' - (n-2)a' - (n-1)a \right] \theta_k. \quad 
\end{equation}
\vspace{0.5cm}

\textbf{Step 2: Definition of $u$.}

Set
\begin{equation}
\lambda := n - 2 \quad  \label{eq:deif-lambda}
\end{equation}
and define, for $t \ge T$,
\begin{equation}\label{eq:5}
h(t) := \frac{e^{\lambda(t-T)}}{t}, \quad g(t) := \frac{e^{\lambda(t-T)}}{t^{3/2}}. \quad 
\end{equation}
Because $e^{t-T} = e^{\log(1/r)} = r^{-1}$ (see \eqref{eq:definition-polar}), these functions can be written equivalently as
\begin{equation}\label{eq:definition-h-g}
h(t) = \frac{r^{2-n}}{t}, \quad g(t) = \frac{r^{2-n}}{t^{3/2}}. \quad 
\end{equation}
Define for $x\neq 0$
\begin{equation}\label{eq:6}
u(x) := (h(t), g(t)\theta_1, \dots, g(t)\theta_n) \in \mathbb{R} \oplus \mathbb{R}^n = \mathbb{R}^{n+1} \quad 
\end{equation}
and assign an arbitrary value to $u(0)$.

\vspace{0.5cm}

We explicitly compute the derivatives of $h(t)$ and $g(t)$:
\begin{align}
h'(t) &= \frac{\lambda e^{\lambda(t-T)} t - e^{\lambda(t-T)}}{t^2} = h(t) \left( \lambda - \frac{1}{t} \right), \label{eq:7_sub1} \\
h''(t) &= h'(t)\left(\lambda - \frac{1}{t}\right) + h(t)\left(\frac{1}{t^2}\right) = h(t) \left( \lambda^2 - \frac{2\lambda}{t} + \frac{2}{t^2} \right), \label{eq:7_sub2} \\
g'(t) &= \frac{\lambda e^{\lambda(t-T)} t^{3/2} - \frac{3}{2}t^{1/2} e^{\lambda(t-T)}}{t^3} = g(t) \left( \lambda - \frac{3}{2t} \right), \label{eq:8_sub1} \\
g''(t) &= g'(t)\left(\lambda - \frac{3}{2t}\right) + g(t)\left(\frac{3}{2t^2}\right) = g(t) \left( \lambda^2 - \frac{3\lambda}{t} + \frac{15}{4t^2} \right). \label{eq:8_sub2}
\end{align}
Since $\lambda \ge 1$ (recall \eqref{eq:deif-lambda}) and $t \ge T \ge 2$, we have $\lambda - \frac{1}{t} \ge 1 - \frac{1}{2} = \frac{1}{2} > 0$, so
\begin{equation}\label{eq:9}
h'(t) = h(t) \left( \lambda - \frac{1}{t} \right) \ge \frac{1}{2} h(t) > 0. \quad 
\end{equation}

Define now
\begin{equation}\label{eq:10}
F := \lambda h' - h'', \quad H := (n-1)g + \lambda g' - g''. \quad 
\end{equation}
By \eqref{eq:2} and \eqref{eq:4} and recalling that $\lambda = n-2$, we have:
\begin{equation} \label{eq:def-laplace-u}
\begin{aligned}
-\Delta u^0  &\overset{\eqref{eq:6}}{=} -\Delta h \overset{\eqref{eq:2}}{=} -\frac{h'' - \lambda h'}{r^2} \overset{\eqref{eq:10}}{=} \frac{F}{r^2}, \\ 
-\Delta u^k &\overset{\eqref{eq:6}}{=} -\Delta(g\theta_k) \overset{\eqref{eq:4}}{=} -\frac{g'' - \lambda g' - (n-1)g}{r^2}\theta_k \overset{\eqref{eq:10}}{=} \frac{H}{r^2} \theta_k, \quad k = 1, \dots, n. \quad 
\end{aligned}
\end{equation}

Inserting \eqref{eq:7_sub1}--\eqref{eq:8_sub2} into the definition of $F$ and $H$ in \eqref{eq:10} yields:
\begin{align}
F &= \lambda h\left(\lambda - \frac{1}{t}\right) - h\left(\lambda^2 - \frac{2\lambda}{t} + \frac{2}{t^2}\right) = h \left( \frac{\lambda}{t} - \frac{2}{t^2} \right), \label{eq:12} \quad  \\
H &= (n-1)g + \lambda g\left(\lambda - \frac{3}{2t}\right) - g\left(\lambda^2 - \frac{3\lambda}{t} + \frac{15}{4t^2}\right) = g \left( n - 1 + \frac{3\lambda}{2t} - \frac{15}{4t^2} \right). \label{eq:13} \quad 
\end{align}
\vspace{0.5cm}

\textbf{Step 3: Definition of the anti-symmetric potential $\Omega$.} 

Set
\begin{equation}\label{eq:14}
 \quad D(t) := \frac{H(t)}{h'(t)}, \quad \quad C(t) := \frac{F(t) + g'(t)D(t)}{(n-1)g(t)}. \quad 
\end{equation}
Since (recall equation \eqref{eq:definition-h-g})
\begin{equation} \label{eq:g-durch-g} 
    g/h = t^{-1/2},
\end{equation}
the formulas \eqref{eq:7_sub1}--\eqref{eq:13} give explicitly:
\begin{equation}\label{eq:15}
D(t) \overset{\eqref{eq:7_sub1}+\eqref{eq:14}}{=} \frac{g(t)\left(n-1 + \frac{3\lambda}{2t} - \frac{15}{4t^2}\right)}{h(t)\left(\lambda - \frac{1}{t}\right)} \overset{\eqref{eq:definition-h-g}}{=} t^{-1/2} \frac{n-1 + \frac{3\lambda}{2t} - \frac{15}{4t^2}}{\lambda - \frac{1}{t}}, \quad 
\end{equation}
and since $F(t) = g(t) \left( \lambda t^{-1/2} - 2 t^{-3/2} \right)$ (see \eqref{eq:12}) and using \eqref{eq:8_sub1} we can rewrite $C$ defined in \eqref{eq:14} as:
\begin{equation}\label{eq:16}
C(t) = \frac{1}{n-1} \left[ \lambda t^{-1/2} - 2 t^{-3/2} + \left( \lambda - \frac{3}{2t} \right) D(t) \right]. \quad 
\end{equation}
Since $t \ge T \ge 2$, the denominator $\lambda - 1/t \ge 1/2$ in equation \eqref{eq:15} is bounded away from zero, and hence both functions $D$ and $C$ remain bounded by a constant depending only on $n$. Thus,
\begin{equation}\label{eq:17}
|C(t)| + |D(t)| \le c_n t^{-1/2} \quad \text{for every } t \ge T \ge 2. \quad 
\end{equation}

For $k = 1, \dots, n$, introduce the scalar valued 1-forms
\begin{equation}\label{eq:18}
\omega_k := C(t) d\theta_k + D(t) \theta_k \frac{dr}{r}. \quad 
\end{equation}
We define $\Omega = (\Omega_{ij})_{0 \le i, j \le n}$ by
\begin{equation}\label{eq:19}
\Omega_{0k} = \omega_k, \quad \Omega_{k0} = -\omega_k, \quad \Omega_{ij} = 0 \text{ otherwise}. \quad 
\end{equation}
Thus, $\Omega_{ij} = -\Omega_{ji}$ by definition and hence  $\Omega(x) \in \mathfrak{so}(n+1) \otimes \wedge^1$ for any $x \neq 0$.
\vspace{0.5cm}

\textbf{Step 4: Equation \eqref{eq: model-pde} holds pointwise a.e.} 

Due to equation \eqref{eq:3}, we have
\begin{equation}\label{eq:20}
du^0 = -h' \frac{dr}{r}, \quad du^k = g d\theta_k - g' \theta_k \frac{dr}{r}. \quad 
\end{equation}
We now use the following notation: $(\Omega \cdot \nabla u)^i = \sum_{k=0}^n \langle \Omega_{ik}, du^k \rangle$:

\textbf{Case $i=0$:} Consider the inner product with $du^k$; using orthogonality $\langle d\theta_k, dr \rangle = 0$ gives:
\begin{equation}
\begin{aligned} \label{eq:case-i-zero}
(\Omega \cdot \nabla u)^0 &= \sum_{k=1}^n \left\langle C d\theta_k + D \theta_k \frac{dr}{r}, g d\theta_k - g' \theta_k \frac{dr}{r} \right\rangle \quad  \\
&= C g \sum_{k=1}^n |d\theta_k|^2 - D g' \sum_{k=1}^n \theta_k^2 \left| \frac{dr}{r} \right|^2 + 0 \quad  \\
&= C g \left(\frac{n-1}{r^2}\right) - D g' (1) \left(\frac{1}{r^2}\right) = \frac{(n-1)Cg - Dg'}{r^2}. \quad 
\end{aligned}
\end{equation}
By definition of $C(t)$ in \eqref{eq:14},
\begin{equation}
    \eqref{eq:14} \quad \Leftrightarrow (n-1)Cg = F + g'D \quad \Leftrightarrow \quad  (n-1)Cg - Dg' = F
\end{equation}
holds equivalently. \\
Hence, we have
\begin{equation}
(\Omega \cdot \nabla u)^0 \overset{\eqref{eq:case-i-zero}}{=} \frac{F}{r^2} \overset{\eqref{eq:def-laplace-u}}{=} -\Delta u^0. \quad 
\end{equation}

\textbf{Case $i \in \{1, \dots, n\}$}: The only non-zero component is $\Omega_{i0} = -\omega_i$. Therefore:
\begin{equation}
\begin{aligned}
(\Omega \cdot \nabla u)^i & = \langle -\omega_i, du^0 \rangle \overset{\eqref{eq:18}}{=} \left\langle -C d\theta_i - D \theta_i \frac{dr}{r}, -h' \frac{dr}{r} \right\rangle \quad  \\
&= C h' \left\langle d\theta_i, \frac{dr}{r} \right\rangle + D h' \theta_i \left| \frac{dr}{r} \right|^2 = 0 + \frac{D h'}{r^2} \theta_i. \quad 
\end{aligned}
\end{equation}
By definition $D = H/h'$, so $D h' = H$. Thus:
\begin{equation}
(\Omega \cdot \nabla u)^i = \frac{H}{r^2} \theta_i \overset{\eqref{eq:def-laplace-u}}{=} -\Delta u^i. \quad 
\end{equation}
We have shown that
\begin{equation}\label{eq:21}
-\Delta u = \Omega \cdot \nabla u \quad \text{on } B^n \setminus \{0\}\quad 
\end{equation}
holds pointwise.

\vspace{0.5cm}

\textbf{Step 5: $\Omega \in L^n(\mathfrak{so}(n+1)\otimes \wedge^1)$}.

By the computations of Step 1 together with the explicit definition in \eqref{eq:18} we have:
\begin{equation}
\begin{aligned} \label{eq:only-omega}
\sum_{k=1}^n |\omega_k|^2 = C^2 \sum_{k=1}^n |d\theta_k|^2 + D^2 \sum_{k=1}^n \theta_k^2 \left|\frac{dr}{r}\right|^2 \overset{\eqref{eq:equal-one}+\eqref{eq:abs-value-theta-k}}{=} \frac{(n-1)C^2 + D^2}{r^2}. \quad 
\end{aligned}
\end{equation}
Since each $\omega_k$ appears twice in $\Omega$ (at the tuple $(0,k)$ and $(k,0)$):
\begin{equation}\label{eq:22}
|\Omega|^2 = \sum_{i,j=0}^n |\Omega_{ij}|^2 = 2 \sum_{k=1}^n |\omega_k|^2 \overset{\eqref{eq:only-omega}}{=}  \frac{2((n-1)C^2 + D^2)}{r^2} \le \frac{c_n}{r^2 t}, \quad 
\end{equation}
where we used estimate \eqref{eq:17}. Using spherical coordinates ($dx = r^{n-1} dr d\theta$), integrating  $|\Omega|^n$ over $B^n$ then yields
\begin{equation}\label{eq:23}
\begin{aligned}
\int_{B^n} |\Omega|^n \; dx &\le c_n |S^{n-1}| \int_0^1 \left(\frac{1}{r \sqrt{t}}\right)^n r^{n-1} dr = c_n |S^{n-1}| \int_0^1 \frac{dr}{r t^{n/2}}. \quad 
\end{aligned}
\end{equation}
Substituting $t = T - \log r $, $ dt = -\frac{dr}{r}$ and $r=1$ gives $ t=T$, so  $r \to 0 $ gives $ t \to \infty$ and hence
\begin{equation}
\int_0^1 \frac{dr}{r t^{n/2}} = \int_T^\infty t^{-n/2} dt = \left[ \frac{t^{1 - n/2}}{1 - n/2} \right]_T^\infty = \frac{T^{1 - n/2}}{\frac{n}{2} - 1} < \infty \quad \text{for } n \ge 3. \quad 
\end{equation}
Thus, $\|\Omega\|_{L^n(B^n)} \le c_n T^{\frac{1}{n} - \frac{1}{2}} \to 0$ as $T \to \infty$. So for $T$ large enough, we obtain  $\|\Omega\|_{L^n(B^n)} < \varepsilon_0$.
\vspace{0.5cm}

\textbf{Step 6. $ u\in W^{1\frac{n}{n-1}}(B^n, \mathbb{R}^{n+1})$.} 

We have
\begin{equation}\label{eq:24}
|\nabla u|^2 = |\nabla u^0|^2 + \sum_{k=1}^n |\nabla u^k|^2 \overset{\eqref{eq:20}}{=} \frac{(h')^2 + (g')^2 + (n-1)g^2}{r^2}. \quad 
\end{equation}
Since $g = h t^{-1/2}$ (recall \eqref{eq:definition-h-g}), due to equations \eqref{eq:7_sub1} and \eqref{eq:8_sub1} we have
\begin{equation}
    (h')^2 + (g')^2 + (n-1)g^2  \approx \lambda^2 h^2,
\end{equation} and hence 
\begin{equation}\label{eq:25}
|\nabla u(x)| \sim \frac{h(t)}{r} \overset{\eqref{eq:5}}{=} \frac{r^{2-n}}{r t} = \frac{1}{r^{n-1} t}. \quad 
\end{equation}
Using $(n-1)\frac{n}{n-1} = n$, we get
\begin{equation}
\begin{aligned}
\int_{B^n} |\nabla u|^{\frac{n}{n-1}} dx &\le c_n |S^{n-1}| \int_0^1 r^{n-1} \left( \frac{1}{r^{n-1} t} \right)^{\frac{n}{n-1}} dr \\
&= c_n |S^{n-1}| \int_0^1 \frac{dr}{r t^{\frac{n}{n-1}}} = c_n |S^{n-1}| \int_T^\infty t^{-\frac{n}{n-1}} dt < \infty, \quad 
\end{aligned}
\end{equation}
since $ \frac{n}{n-1} > 1$ for all $n \ge 3$.

The computation remains for the $L^{\frac{n}{n-1}}$–norm of $u$. Due to the definition of $u$ in \eqref{eq:6}, we have
\begin{equation}
    \begin{aligned}
        |u|^2 = h^2 + g^2 = h^2(1 + t^{-1}) \le 2h^2 \text{  for } t \ge 2.
    \end{aligned}
\end{equation} Thus:
\begin{equation}
\int_{B^n} |u|^{\frac{n}{n-1}} dx \overset{\eqref{eq:definition-h-g}}{\le} c_n \int_0^1 r^{n-1} (r^{2-n} t^{-1})^{\frac{n}{n-1}} dr = c_n \int_0^1 r^{n-1 - (n-2)p} t^{-\frac{n}{n-1}} \; dr. \quad 
\end{equation}
For the radial exponent we have $n - 1 - (n-2)\frac{n}{n-1} = \frac{(n-1)^2 - n(n-2)}{n-1} = \frac{1}{n-1} > -1$, so the integrand is integrable near $r=0$. Hence, $u \in W^{1\frac{n}{n-1}}(B^n, \mathbb{R}^{n+1})$.

\vspace{0.5cm}

\textbf{Step 7: Removability of the singularity.} 

To verify that no Dirac mass is created at the origin, let $\varphi \in C_c^\infty(B^n, \mathbb{R}^{n+1})$. Integrating by parts on $B^n \setminus \overline{B_\rho}$ yields 
\begin{equation}
\int_{B^n \setminus \overline{B_\rho}} \langle \nabla u, \nabla \varphi \rangle dx = \int_{B^n \setminus \overline{B_\rho}} (\Omega \cdot \nabla u) \cdot \varphi \, dx + \int_{\partial B_\rho} \partial_r u \cdot \varphi \, d\mathcal{H}^{n-1}. \quad 
\end{equation}
By \eqref{eq:20}, we have $|\partial_r u| \le \frac{|h'| + |g'|}{r}$. The integral over the boundary leads to
\begin{equation}
\int_{\partial B_\rho} |\partial_r u| d\mathcal{H}^{n-1} \le c_n \rho^{n-1} \frac{|h'(t_\rho)| + |g'(t_\rho)|}{\rho} = c_n \rho^{n-2} \left( |h'(t_\rho)| + |g'(t_\rho)| \right),
\end{equation}
where $t_\rho = T + \log(1/\rho)$.

Recalling that $e^{\lambda(t_\rho - T)} = e^{\lambda \log(1/\rho)} = \rho^{-\lambda} = \rho^{-(n-2)}$, we substitute $h'(t_\rho)$ and $g'(t_\rho)$ from \eqref{eq:7_sub1} and \eqref{eq:8_sub1} in order to get:
\begin{equation}
h'(t_\rho) = \rho^{-(n-2)} t_\rho^{-1} \left(\lambda - \frac{1}{t_\rho}\right), \quad g'(t_\rho) = \rho^{-(n-2)} t_\rho^{-3/2} \left(\lambda - \frac{3}{2t_\rho}\right).
\end{equation}
Multiplying by $\rho^{n-2}$ then gives
\begin{equation}
\rho^{n-2} h'(t_\rho) = t_\rho^{-1} \left(\lambda - \frac{1}{t_\rho}\right), \quad \rho^{n-2} g'(t_\rho) = t_\rho^{-3/2} \left(\lambda - \frac{3}{2t_\rho}\right).
\end{equation}
As $\rho \to 0^+$, $t_\rho = T + \log(1/\rho) \to +\infty$ and thus:
\begin{equation}
\lim_{\rho \to 0} \int_{\partial B_\rho} |\partial_r u| d\mathcal{H}^{n-1} \le c_n \lim_{t_\rho \to \infty} \left( \frac{\lambda - \frac{1}{t_\rho}}{t_\rho} + \frac{\lambda - \frac{3}{2t_\rho}}{t_\rho^{3/2}} \right) = 0. \quad 
\end{equation}
This shows that $\rho \to 0$ yields 
\begin{equation}
    \int_{B^n} \langle \nabla u, \nabla \varphi \rangle dx = \int_{B^n} (\Omega \cdot \nabla u) \cdot \varphi \, dx,
    \end{equation}
    and we conclude that $-\Delta u = \Omega \cdot \nabla u$ holds in  $\mathcal{D}'(B^n)$.

\vspace{0.5cm}

\textbf{Step 8: $u \notin L^\infty$ and is therefore not continuous.} 

Recall  the definition of the first component of $u$ (see \eqref{eq:6}):
\begin{equation}
u^0(x) = h(t) = \frac{|x|^{2-n}}{T + \log(1/|x|)}. \quad 
\end{equation}
Since $n \ge 3$, as $x \to 0$, $|x|^{2-n}$ grows polynomially, $T + \log(1/|x|)$ grows logarithmically. Hence,
\begin{equation}
u^0(x) \rightarrow +\infty \quad \text{as } x \to 0. \quad 
\end{equation}
Thus, $u \notin L^\infty$ and cannot have a continuous representative.
\end{proof}

\begin{remark}
The  construction above has the exact asymptotic behaviour:
\begin{equation}
|\Omega(x)| \sim \frac{1}{|x| (T + \log(1/|x|))^{1/2}}, \quad |\nabla u(x)| \sim \frac{1}{|x|^{n-1} (T + \log(1/|x|))}. \quad 
\end{equation}
By standard rearrangement characterisation of Lorentz spaces, we have
\begin{equation}
\Omega \in \left( \bigcap_{q > 2} L^{(n,q)}(B^n) \right) \setminus L^{(n,2)}(B^n), \quad 
\end{equation}
whereas
\begin{equation}
\nabla u \in \left( \bigcap_{s > 1} L^{(\frac{n}{n-1},s)}(B^n) \right) \setminus L^{(\frac{n}{n-1},1)}(B^n). \quad 
\end{equation}
Thus, the result is optimal.
\end{remark}

\appendix

\setcounter{equation}{0}
\setcounter{figure}{0}
\setcounter{table}{0}

\renewcommand{\theequation}{A.\arabic{equation}}
\renewcommand{\thefigure}{A.\arabic{figure}}
\renewcommand{\thetable}{A.\arabic{table}}

\section{}

Let $1 \leq q \le 2$ and denote by $2 \leq q'$ its corresponding conjugate exponent. 
In the following, let $\frac{n}{2}<p<n$. Then we have $W^{1,(p,1)} \hookrightarrow L^{(s,1)} \subset L^{(\frac{np}{n-p},q')}$ for any $s \leq \frac{np}{n-p}$ and hence $n< \frac{np}{n-p}$ is satisfied by the restriction on the parameter $p$. Further, since $p<n$ we have 
\begin{equation}
    \begin{aligned}
        \frac{1}{p}= \frac{1}{n} + \frac{1}{\frac{np}{n-p}}, \quad \text{ and } \quad 1= \frac{1}{q}+ \frac{1}{q'},\label{eq:exponent-p-np}
    \end{aligned}
\end{equation}
which for any $ f\in L^{(n,q)}$ and $g \in W^{1,(p,1)}$ leads to
\begin{equation}
    \begin{aligned}
        \Vert f\cdot g \Vert_{L^{(p,1)}} \lesssim \Vert f \Vert_{L^{(n,q)}} \Vert g\Vert_{L^{(\frac{np}{n-p},q')}} \lesssim \Vert f \Vert_{L^{(n,q)}} \Vert g\Vert_{W^{1,(p,1)}}. \label{eq:hölder-inequality-for-p}
    \end{aligned}
\end{equation}
\begin{lemma}\label{lm:better-regularity-space}
  Assume there exists $\varepsilon>0$ and a constant $C>0$ such that for any $\Omega \in W^{1,(p,1)}(B^n, \mathfrak{so}(m) \otimes \wedge^1)$ satisfying 
    \begin{equation} \begin{aligned}
        \Vert \Omega \Vert_{L^{(n,q)}} < \varepsilon(n,m),
    \end{aligned} \end{equation} 
    there exists $P \in W^{2,(p,1)}(B^n,\mathrm{SO}(m))$ and $\xi \in W^{2,(p,1)}(B^n, \mathfrak{so}(m) \otimes \bigwedge^{n-2})$ such that 
    \begin{equation} \begin{aligned}
      \begin{cases}
            \ast d\xi = P^{-1} dP + P^{-1} \Omega P & \text{ in } B^n, \\
            d^\ast\xi = 0 & \text{ in } B^n, \\
            \xi = 0 & \text{ on } \partial B^n, 
        \end{cases} \label{eq:gauge-system-better}
    \end{aligned} \end{equation} 
    and satisfy
    \begin{equation} \begin{aligned}
       \Vert dP \Vert_{W^{1,(p,1)}} + \Vert d\xi \Vert_{W^{1,(p,1)}} \leq C \Vert \Omega \Vert_{W^{1,(p,1)}}
    \label{eq:apriori-bound-better-lemma1}
    \end{aligned} \end{equation} 
    and
    \begin{equation} \begin{aligned}
       \Vert dP \Vert_{L^{(n,q)}} + \Vert d \xi \Vert_{L^{(n,q)}}\leq C \Vert \Omega \Vert_{L^{(n,q)}} \leq C \varepsilon(n,m).
    \label{eq:apriori-bound-better-lemma2}
    \end{aligned} \end{equation} 
\end{lemma}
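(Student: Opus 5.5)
We follow Uhlenbeck's continuity method \cite[Lemma 2.7, 2.8]{uhlenbeck}. Fix $\frac n2<p<n$ and consider the set
\[
\mathcal U_\varepsilon:=\{\Omega\in W^{1,(p,1)}(B^n,\mathfrak{so}(m)\otimes\wedge^1):\ \|\Omega\|_{L^{(n,q)}}<\varepsilon\},
\]
and its subset $\mathcal V_\varepsilon$ of those $\Omega$ for which a pair $(P,\xi)\in W^{2,(p,1)}$ solving \eqref{eq:gauge-system-better} exists with the bounds \eqref{eq:apriori-bound-better-lemma1} and \eqref{eq:apriori-bound-better-lemma2}, the latter with constant $2C$ instead of $C$. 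Since $\mathcal U_\varepsilon$ is star-shaped with respect to $0$ (so path connected) and $0\in\mathcal V_\varepsilon$ via $P=\mathrm{Id}$, $\xi=0$, it suffices to show that $\mathcal V_\varepsilon$ is both closed and open in $\mathcal U_\varepsilon$. The choice $p>\frac n2$ ensures $W^{2,(p,1)}\hookrightarrow W^{1,(n,1)}\hookrightarrow C^0$. Hence $P$ is continuous, the maps $\lambda\mapsto P^{-1}\lambda P$ and $P\mapsto P^{-1}dP$ are smooth on $W^{1,(p,1)}$, and products are controlled by \eqref{eq:hölder-inequality-for-p}.

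\textbf{A priori estimate (closedness, and improvement $2C\to C$).} Taking $d^\ast$ of the first equation, using $d^\ast(\ast d\xi)=0$ and the gauge condition, we obtain
\[
\Delta\xi=\ast\big(dP^{-1}\wedge dP+d(P^{-1}\Omega P)\big)\quad\text{with } \xi=0 \text{ on }\partial B^n,
\]
and, taking $d^\ast$ again,
\[
d^\ast(P^{-1}dP)=-d^\ast(P^{-1}\Omega P),
\]
which is an elliptic equation for $P$ with a Neumann-type boundary condition, $\langle P^{-1}dP,\nu\rangle=-\langle P^{-1}\Omega P,\nu\rangle$. Calderón--Zygmund estimates in Lorentz spaces, together with Lorentz--Hölder $L^{(n,q)}\cdot L^{(n,q')}\subset L^{(\frac n2,1)}$ and $L^{(n,q)}\subset L^{(n,q')}$ for $q\le2$, give
\[
\|d\xi\|_{L^{(n,q)}}\lesssim \|\Omega\|_{L^{(n,q)}}+\|dP\|_{L^{(n,q)}}^2 .
\]
Since $P^{-1}dP=\ast d\xi-P^{-1}\Omega P$, we also have
\[
\|dP\|_{L^{(n,q)}}\le\|d\xi\|_{L^{(n,q)}}+\|\Omega\|_{L^{(n,q)}}.
\]
Combining these, and using the smallness $\|dP\|_{L^{(n,q)}}\le 2C\varepsilon$ to absorb the quadratic term, yields \eqref{eq:apriori-bound-better-lemma2} with constant $C$. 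The $W^{1,(p,1)}$ bound \eqref{eq:apriori-bound-better-lemma1} follows in the same way by differentiating once and using \eqref{eq:hölder-inequality-for-p}, again absorbing terms like $\|dP\|_{L^{(n,q)}}\|\nabla^2P\|_{L^{(p,1)}}$. For closedness, take $\Omega_k\to\Omega$ in $W^{1,(p,1)}$ with $\Omega_k\in\mathcal V_\varepsilon$. The uniform $W^{2,(p,1)}$ bounds give weak limits; the compact embedding into $W^{1,s}$, $s<p^\ast$, allows passing to the limit in the nonlinear equations, and $P\in\mathrm{SO}(m)$ is preserved by the uniform convergence. Lower semicontinuity preserves the bounds.

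\textbf{Openness.} Given $\Omega_0\in\mathcal V_\varepsilon$ with gauge $P_0$, we seek $P=P_0e^{U}$ with $U\in W^{2,(p,1)}(\mathfrak{so}(m))$ small and $\xi$, and apply the implicit function theorem to
\[
\mathcal N(\Omega,U):=d^\ast\big(P^{-1}dP+P^{-1}\Omega P\big)
\]
together with the boundary trace $\langle P^{-1}dP+P^{-1}\Omega P,\nu\rangle$, viewed as a map $W^{1,(p,1)}\times W^{2,(p,1)}_{\perp}\to L^{(p,1)}\times(\text{trace space})$. Here $W^{2,(p,1)}_{\perp}$ denotes the space of $U$ with $\int U=0$ modulo constants. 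The linearization in $U$ at $U=0$ is
\[
U\mapsto \Delta U+d^\ast[\ast d\xi_0,U]
\]
with the Neumann condition. It is a small perturbation of the Neumann Laplacian, since $\|d\xi_0\|_{L^{(n,q)}}\lesssim\varepsilon$, and is therefore an isomorphism. Once $U$ is found, $P^{-1}dP+P^{-1}\Omega P$ is co-closed with vanishing normal component on $\partial B^n$, so by the Hodge decomposition it equals $\ast d\xi$ for some $\xi$ with $d^\ast\xi=0$ and $\xi=0$ on $\partial B^n$. For $\Omega$ near $\Omega_0$ the solution depends continuously on $\Omega$, so the strict bounds with $2C$ persist, and the improved bounds then follow from the a priori estimate.

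\textbf{Main obstacle.} We expect the main difficulty to lie in the Lorentz-space elliptic theory with these boundary conditions: Neumann and Dirichlet Calderón--Zygmund estimates in $L^{(n,q)}$ and $W^{2,(p,1)}$, and the Hodge decomposition producing $\xi$. We plan to obtain these by real interpolation from the classical $L^p$ theory. A secondary point is checking that the quadratic term $dP^{-1}\wedge dP$ really lands in $L^{(\frac n2,1)}$, which is precisely where $q\le2$ is used.
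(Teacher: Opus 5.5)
Your proposal is correct and is essentially the paper's proof: Uhlenbeck's continuity method on the star-shaped set of $L^{(n,q)}$-small $\Omega$, closedness via uniform $W^{2,(p,1)}$ bounds and compactness, openness via the implicit function theorem for $P=P_0e^{U}$ with linearization $\Delta U+d^\ast[\ast d\xi_0,U]$, and the a priori estimate from $\Delta\xi=\ast\big(dP^{-1}\wedge dP+d(P^{-1}\Omega P)\big)$ using $L^{(n,q)}\cdot L^{(n,q')}\subset L^{(\frac n2,1)}$ and absorption, with your $2C\to C$ improvement playing the role of Lemma \ref{lm:lemma-to-conclude}. Your boundary condition $\langle P^{-1}dP+P^{-1}\Omega P,\nu\rangle=0$, followed by an explicit Hodge construction of $\xi$, is in fact the right formulation (under the paper's condition $\partial_r U=0$ the gauged form keeps the normal component of the perturbation $\lambda$); however, you should take the target of $\mathcal N$ to be the closed subspace of pairs satisfying the divergence-theorem compatibility condition between the interior and boundary components, because the perturbed Neumann Laplacian on mean-zero $U$ is an isomorphism only onto that subspace.
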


\begin{proof}[Proof of Lemma \ref{lm:better-regularity-space}.] \label{proof:better-space}
We follow the continuity method by Uhlenbeck \cite{uhlenbeck} with the techniques established by Rivière \cite{conservation-law-conf-invariant}. 

Introduce for any constants $\varepsilon>0$ and $C\geq0$ the set 
\begin{equation}
\begin{aligned}
   \mathcal{U}_{\varepsilon,C} := \left\{ 
\begin{array}{l}
\Omega \in W^{1,(p,1)}(B^n, \mathfrak{so}(m) \otimes \wedge^1(\R^n)):\; \|\Omega\|_{L^{(n,q)}} \le \varepsilon, \text{ and} \\[1ex]
\text{there exist } P \text{ and } \xi \text{ satisfying } \eqref{eq:gauge-system-better},\eqref{eq:apriori-bound-better-lemma1}, \eqref{eq:apriori-bound-better-lemma2}.
\end{array} 
\right\}
\end{aligned} \label{eq:set-u-wishlist}
\end{equation}
Further, define the set 
\begin{equation}
    \begin{aligned}
        \mathcal{V}_\varepsilon := \left\{ \Omega \in W^{1,(p,1)}(B^n, \mathfrak{so}(m) \otimes \wedge^1 ): \;\|\Omega\|_{L^{(n,q)}} \le \varepsilon \right\}. \label{eq:set-v}
    \end{aligned}
\end{equation}

We show that for $\varepsilon>0$ small enough and $C\geq0$ large enough, $\mathcal{V}_\varepsilon= \mathcal{U}_{\varepsilon,C}$ by the following steps: 
\begin{itemize}[label=]
\item 1. Step: \quad $\mathcal{U}_{\varepsilon,C}$ is non-empty ($0\in \mathcal{U}_{\varepsilon,C}$).
    \item 2. Step: \quad $\mathcal{V}_\varepsilon$ is path-connected. 
    \item 3. Step:  \quad$\mathcal{U}_{\varepsilon,C}$ is closed in $\mathcal{V}_\varepsilon$ with respect to the $W^{1,(p,1)}$-topology.
    \item 4. Step: \quad $\mathcal{U}_{\varepsilon,C}$ is open in $\mathcal{V}_\varepsilon$ with respect to the $W^{1,(p,1)}$-topology.
\end{itemize}
\vspace{0.5cm}

\textbf{1. Step: $\mathcal{U}_{\varepsilon,C}$ is non-empty.} 

Trivially $0\in \mathcal{U}_{\varepsilon,C}$ with the corresponding solution 
\[P=\mathrm{Id}_m \in W^{2,(p,1)}, \qquad \xi =0 \in W^{2,(p,1)}.
\]

\vspace{0.5cm}

\textbf{2. Step: $\mathcal{V}_{\varepsilon}$ is path-connected.}

We show that $\mathcal{V}_\varepsilon$ is star-shaped and hence path-connected. Fix $\Omega_0 \in \mathcal{V}_\varepsilon$ and let $\Omega \in \mathcal{V}_\varepsilon$ be arbitrary. Define the path by 
\[
\Omega_t \coloneqq t \Omega + (1-t) \Omega_0, \qquad t \in [0,1].
\]
Then, for any $t \in [0,1]$, triangle inequality gives
\[
\Vert \Omega_t \Vert_{L^{(n,q)}} \leq t \Vert \Omega \Vert_{L^{(n,q)}} + (1-t) \Vert \Omega_0 \Vert_{L^{(n,q)}} \leq t \varepsilon + (1-t)\varepsilon = \varepsilon,
\]
and hence $\Omega_t \in \mathcal{V}_\varepsilon.$

\vspace*{0.5cm}

\textbf{3. Step: $\mathcal{U}_{\varepsilon,C}$ is closed with respect to the $W^{1,(p,1)}$-topology.}

Let $(\Omega_k)_{k \in \N}$ be a sequence in $\mathcal{U}_{\varepsilon,C}$ with 
\[
\Omega_k \to \Omega_\infty \quad \text{strongly in } W^{1,(p,1)},
\]
where the limit satisfies $\Omega_\infty \in \mathcal{V}_\varepsilon.$ We claim that $\Omega_\infty \in \mathcal{U}_{\varepsilon,C}.$

Since $\Omega_k \in \mathcal{U}_{\varepsilon,C}$, there exists a sequence of solutions to \eqref{eq:gauge-system-better}, denoted by $P_k \in W^{2,(p,1)}(B^n, \mathrm{SO}(m))$ and $\xi_k \in W^{2,(p,1)}(B^n, \mathfrak{so}(m) \otimes \bigwedge^{n-2})$ for each $k \in \N$. 
Due to the uniform bounds $\Vert \Omega_k \Vert_{L^{(n,q)}} \leq \varepsilon$ and $\Vert \Omega_k \Vert_{W^{1,(p,1)}} \lesssim \Vert \Omega_\infty \Vert_{W^{1,(p,1)}} < \infty$, together with estimates \eqref{eq:apriori-bound-better-lemma1} and \eqref{eq:apriori-bound-better-lemma2}, yields a weakly convergent subsequence $(P_{k_j}, \xi_{k_j})$ such that
\begin{equation} \begin{aligned}
    P_{k_j} \rightharpoonup P_\infty \quad \text{and} \quad \xi_{k_j} \rightharpoonup \xi_\infty \quad \text{weakly in } W^{2,(p,1)}.
\end{aligned} \end{equation}
By lower semicontinuity of the Sobolev-Lorentz norms, the weak limits satisfy
\begin{equation}
    \begin{aligned}
        \Vert dP_\infty \Vert_{W^{1,(p,1)}} + \Vert d\xi_\infty \Vert_{W^{1,(p,1)}} \leq \liminf_{j \to \infty} C \Vert \Omega_{k_j} \Vert_{W^{1,(p,1)}} = C \Vert \Omega_\infty \Vert_{W^{1,(p,1)}},
    \end{aligned}
\end{equation}
and 
\begin{equation}
    \begin{aligned}
        \Vert dP_\infty \Vert_{L^{(n,q)}} + \Vert d\xi_\infty \Vert_{L^{(n,q)}} \leq \liminf_{j \to \infty} C \Vert \Omega_{k_j} \Vert_{L^{(n,q)}} = C \Vert \Omega_\infty \Vert_{L^{(n,q)}}.
    \end{aligned}
\end{equation}

It remains to show that $(P_\infty, \xi_\infty)$ solves \eqref{eq:gauge-system-better} with respect to $\Omega_\infty$ and that $P_\infty \in \mathrm{SO}(m)$ almost everywhere. \\
First, by Rellich-Kondrachov compact embedding, $W^{2,(p,1)}(B^n) \hookrightarrow W^{1,(s,1)}(B^n)$ for any $1 \le s < \frac{np}{n-p}$, there exists a strongly convergent subsequence in $L^{(s,1)}$, which implies pointwise convergence almost everywhere of $P_{k_j} \to P_\infty$ and $d^\ast\xi_{k_j} \to d^\ast\xi_\infty$. Passing to the limit in $P_{k_j} P_{k_j}^\top = \mathrm{Id}_m$ and $d^\ast\xi_{k_j} = 0$ pointwise a.e. confirms that $P_\infty \in W^{2,(p,1)}(B^n, \mathrm{SO}(m))$ and $d^\ast\xi_\infty = 0$ on $B^n$ almost everywhere. \\

Second, we show that the limit is a solution in the sense of distributions, i.e.,
\begin{equation}
\begin{aligned}
 \ast d \xi_\infty = P_\infty^{-1} dP_\infty + P_\infty^{-1} \Omega_\infty P_\infty \quad \text{in } \mathcal{D}'(B^n). \label{eq:weak-sense-closed-proof}
 \end{aligned} 
\end{equation}
The compact embedding $W^{2,(p,1)}(B^n) \hookrightarrow W^{1,s}(B^n)$ ensures strong convergence:
\[
 P_{k_j} \to P_\infty \quad \text{in } L^{(s,1)}(B^n) \quad \text{and} \quad dP_{k_j} \to dP_\infty \quad \text{strongly in } L^{(s,1)}(B^n) \quad \forall 1 \leq s < \frac{np}{n-p}.
\]

\vspace{0.5cm}
\textit{Left-Hand Side (LHS) of \eqref{eq:weak-sense-closed-proof}:}
For any test function $\Phi \in C_c^\infty(B^n, \mathfrak{so}(m) \otimes \wedge^1(\R^n))$, weak convergence $\xi_{k_j} \rightharpoonup \xi_\infty$ in $W^{2,(p,1)}$ implies $d\xi_{k_j} \rightharpoonup d\xi_\infty$ weakly in $W^{1,(p,1)}$ and thus in $L^{(n,q)}$:
\begin{equation} \begin{aligned}
    \left| \int_{B^n} (\ast d \xi_{k_j} - \ast d \xi_\infty) \cdot \Phi \right| \longrightarrow 0.
\end{aligned} \end{equation}

\vspace{0.5cm}

\textit{Right-Hand Side (RHS) of \eqref{eq:weak-sense-closed-proof}:}
Testing against $\Phi \in C_c^\infty(B^n, \mathfrak{so}(m) \otimes \wedge^1(\R^n))$, we bound each term:
\small
\begin{equation} \begin{aligned}
    & \Bigg\vert \int_{B^n} (P_{\infty}^{-1} dP_\infty - P_{k_j}^{-1} dP_{k_j}) \Phi \Bigg\vert \\
    & \le \Bigg\vert \int_{B^n} P_{\infty}^{-1} (dP_\infty - dP_{k_j}) \Phi \Bigg\vert + \Bigg\vert \int_{B^n} (P_{k_j}^{-1} - P_\infty^{-1}) dP_{k_j} \Phi \Bigg\vert \\
    & \lesssim \Bigg\vert \int_{B^n} (dP_\infty - dP_{k_j}) (P_\infty^{-\top} \Phi) \Bigg\vert + \Vert dP_{k_j} \Vert_{L^{(n,q)}} \Vert P_{k_j}^{-1} - P_\infty^{-1} \Vert_{L^{(\frac{n}{n-1},q')}} \Vert \Phi \Vert_{L^\infty} \\
    & \longrightarrow 0,
\end{aligned} \end{equation}
\normalsize
and similarly
\small
\begin{equation} \begin{aligned}
    & \Bigg\vert \int_{B^n} (P_{\infty}^{-1} \Omega_\infty P_\infty - P_{k_j}^{-1} \Omega_{k_j} P_{k_j}) \Phi \Bigg\vert \\
    & \le \Bigg\vert \int_{B^n} P_{\infty}^{-1} \Omega_\infty (P_\infty - P_{k_j}) \Phi \Bigg\vert + \Bigg\vert \int_{B^n} (P_{k_j}^{-1}\Omega_{k_j} - P_\infty^{-1} \Omega_\infty) P_{k_j} \Phi \Bigg\vert \\
    & \lesssim \Vert P_\infty^{-1} \Vert_{L^\infty} \Vert \Omega_\infty \Vert_{L^{(n,q)}} \Vert P_\infty - P_{k_j} \Vert_{L^{(\frac{n}{n-1},q')}} \Vert \Phi \Vert_{L^\infty} \\
    & \quad + \Vert P_{k_j} \Vert_{L^\infty} \Big( \Vert P_{k_j}^{-1} \Vert_{L^\infty} \Vert (\Omega_{k_j} - \Omega_\infty) \Phi \Vert_{L^1} + \Vert P_{k_j}^{-1} - P_\infty^{-1} \Vert_{L^{(\frac{n}{n-1},q')}} \Vert \Omega_\infty \Vert_{L^{(n,q)}} \Vert \Phi \Vert_{L^\infty} \Big) \\
    & \longrightarrow 0. 
\end{aligned}\end{equation}
\normalsize

Using the trace theorem for Sobolev--Lorentz spaces, the continuous embedding 
\begin{equation}
    W^{2,(p,1)}(B^n) \hookrightarrow W^{2-\frac{1}{p},p}(\partial B^n) \hookrightarrow L^1(\partial B^n)
\end{equation} 
ensures that $\xi_k \big|_{\partial B^n} = 0$ passes to the weak limit, so $\xi_\infty \big|_{\partial B^n} = 0.$

Hence, $\Omega_\infty$ fulfils each condition for belonging to $\mathcal{U}_{\varepsilon,C}$.

\vspace*{0.5cm}

\textbf{4. Step: \quad $\mathcal{U}_{\varepsilon,C}$ is open in $\mathcal{V}_\varepsilon$ with respect to the $W^{1,(p,1)}$-topology.}

In this step, we need to restrict our choice of $\varepsilon>0$, choosing it sufficiently small, and set $C\geq 0$ to be sufficiently large.

Let $\Omega_0 \in \mathcal{U}_{\varepsilon,C}.$ We claim that there exists $\delta>0$ such that for any $\omega \in W^{1,(p,1)}(B^n, \mathfrak{so}(m) \otimes \wedge^1)$
\begin{equation}
    \begin{aligned}     
\text{with } \Vert \omega \Vert_{W^{1,(p,1)}} < \delta, \quad  \text{ we have } \quad \Omega_0 + \omega \in \mathcal{U}_{\varepsilon,C}.
    \end{aligned}
\end{equation}

Since $\Omega_0 \in \mathcal{U}_{\varepsilon,C}$, there exists a corresponding pair of solutions $(P_0, \xi_0)$ satisfying \eqref{eq:gauge-system-better}. 
Define the non-linear operator 
\small
\begin{equation}
    \begin{aligned}
        \mathcal{N} \colon W^{1,(p,1)}(B^n, \mathfrak{so}(m) \otimes \wedge^1) \times X &\to L^{(p,1)}(B^n,\mathfrak{so}(m)) \times W^{1-\frac{1}{p},(p,1)}(\partial B^n, \mathfrak{so}(m)), \\
        (\lambda, U) &\mapsto \Bigg( d^\ast \Big( e^{-U}de^U + e^{-U}(\ast d\xi_0 + \lambda ) e^U \Big), \; \partial_r U \Big|_{\partial B^n} \Bigg),
    \end{aligned}
\end{equation}
where 
\begin{equation}
\begin{aligned}
X &:= \left\{ U \in W^{2,(p,1)}(B^n, \mathfrak{so}(m)) \;\middle|\; \int_{B^n} U \, dx = 0 \right\} \\
     Y &:= L^{(p,1)}(B^n, \mathfrak{so}(m)) \times W^{1-\frac{1}{p},(p,1)}(\partial B^n, \mathfrak{so}(m)).
     \end{aligned}
\end{equation}
\normalsize
 
Since $P \in W^{2,(p,1)}(B^n, \mathrm{SO}(m))$, the map $\lambda \mapsto P^{-1}\lambda P$ and its inverse $\zeta \mapsto P \zeta P^{-1}$ are continuous from $W^{1,(p,1)}$ to $W^{1,(p,1)}$. Indeed, because $P(x) \in \mathrm{SO}(m)$ pointwise, we have
\begin{equation}
    \begin{aligned}
        \Vert P \lambda P^{-1} \Vert_{L^{(p,1)}} \lesssim \Vert \lambda \Vert_{L^{p}},
    \end{aligned}
\end{equation}
and applying the product rule yields:
\begin{equation}
    \begin{aligned}
        \Vert \nabla ( P \lambda P^{-1} )\Vert_{L^{(p,1)}} 
        &\lesssim \Vert \nabla P \lambda P^{-1} \Vert_{L^{(p,1)}} + \Vert P \nabla \lambda P^{-1} \Vert_{L^{(p,1)}} + \Vert P \lambda \nabla P^{-1} \Vert_{L^{(p,1)}} \\
        &\lesssim \Vert \nabla P \Vert_{L^{(n,q)}} \Vert \lambda \Vert_{L^{(\frac{np}{n-p},q')}} + \Vert \nabla \lambda \Vert_{L^{(p,1)}} + \Vert \nabla P \Vert_{L^{(n,q)}} \Vert \lambda \Vert_{L^{(\frac{np}{n-p},q')}} \\
        &\lesssim \Vert \nabla P \Vert_{L^{(n,q)}} \Vert \lambda \Vert_{W^{1,(p,1)}} + \Vert \nabla \lambda \Vert_{L^{(p,1)}},
    \end{aligned}
\end{equation}
where we used $W^{1,(p,1)}(B^n) \hookrightarrow L^{(\frac{np}{n-p},q')}(B^n)$.

Thus, the non-linear map $\mathcal{N}$ is $C^1$-smooth. We follow the strategy of Lemma 2.7 and Lemma 2.8 in \cite{uhlenbeck}.

To derive the linearized operator $L_1 := \partial_U \mathcal{N}(0,0) \colon X \to Y$, we compute the directional derivative of $\mathcal{N}(0, tV)$ at $t=0$ for $V \in X$. Expanding the matrix exponentials yields
\begin{equation}
\begin{aligned}
    e^{tV} &= \mathrm{Id}_m + tV + \mathcal{O}(t^2), \\
    e^{-tV} &= \mathrm{Id}_m - tV + \mathcal{O}(t^2), \\
    d(e^{tV}) &= t\, dV + \mathcal{O}(t^2).
\end{aligned}
\end{equation}

Substituting these expansions into the first component of $\mathcal{N}$ gives
\begin{equation}
\begin{aligned}
    e^{-tV}d(e^{tV}) + e^{-tV} \ast d\xi_0 e^{tV} 
    &= (\mathrm{Id}_m - tV)(t\, dV) + (\mathrm{Id}_m - tV) \ast d\xi_0 (\mathrm{Id}_m + tV) + \mathcal{O}(t^2) \\
    &= \ast d\xi_0 + t \Big( dV + \ast d\xi_0 V - V \ast d\xi_0 \Big) + \mathcal{O}(t^2).
\end{aligned}
\end{equation}

Differentiating with respect to $t$ at $t=0$ yields
\begin{equation}
    \left.\frac{d}{dt}\right|_{t=0} \Big( e^{-tV}d(e^{tV}) + e^{-tV} \ast d\xi_0 e^{tV} \Big) = dV + [\ast d\xi_0, V].
\end{equation}

Applying $d^\ast$ (noting that $d^\ast d V = \Delta V$), the linearized operator $L_1 := \partial_U \mathcal{N}(0,0) \colon X \to Y$ is explicitly given by
\begin{equation}
    L_1 V = \Big( \Delta V + d^\ast( \ast d\xi_0 V - V \ast d\xi_0 ), \; \partial_r V \Big|_{\partial B^n} \Big).
\end{equation}

Standard elliptic theory for the Laplacian $L_0 V := (\Delta V, \partial_r V \big|_{\partial B^n})$ in Sobolev--Lorentz spaces yields
\begin{equation}
    \Vert V \Vert_{W^{2,(p,1)}(B^n)} \le c_0 \, \Vert L_0 V \Vert_Y = c_0 \Big( \Vert \Delta V \Vert_{L^{(p,1)}(B^n)} + \Vert \partial_r V \Vert_{W^{1-\frac{1}{p},(p,1)}(\partial B^n)} \Big). \label{eq:elliptic-estimate-appendix}
\end{equation}

Combining $L_0$ and $L_1$ via the triangle inequality gives
\begin{equation}
    \Vert V \Vert_{W^{2,(p,1)}(B^n)} \le c_0 \Vert L_1 V \Vert_Y + c_0 \Vert   d\xi_0 \wedge dV -  dV \wedge  d\xi_0  \Vert_{L^{(p,1)}(B^n)}.
\end{equation}

It remains to estimate the perturbation term. Using  $W^{1,(p,1)}(B^n) \hookrightarrow L^{(\frac{np}{n-p},q')}(B^n)$ (see \eqref{eq:hölder-inequality-for-p}),  yields
    \begin{equation}
        \Vert  d\xi_0 \wedge d V \Vert_{L^{(p,1)}} \leq \Vert d\xi_0 \Vert_{L^{(n,q)}} \Vert \nabla V \Vert_{L^{(\frac{np}{n-p},q')}} \leq C_2 \Vert d\xi_0 \Vert_{L^{(n,q)}} \Vert V \Vert_{W^{2,(p,1)}}. \label{eq:wedge-estimate-appendix}
    \end{equation}

Inserting \eqref{eq:wedge-estimate-appendix} in \eqref{eq:elliptic-estimate-appendix} and using the anti-symmetry of the wedge-product, we obtain:
\begin{equation}
    \Vert V \Vert_{W^{2,(p,1)}(B^n)} \leq c_0\Vert L_1 V \Vert_Y + c_0 C_3 \Vert d\xi_0 \Vert_{L^{(n,q)}(B^n)} \Vert V \Vert_{W^{2,(p,1)}(B^n)}.
\end{equation}

Since $\xi_0$ is a solution of $\Omega_0 \in \mathcal
U_{\varepsilon,C}$, we have $\Vert d\xi_0 \Vert_{L^{(n,q)}(B^n)} \leq \varepsilon$ and choosing $\varepsilon < \frac{1}{2 c_0 C_3}$ small enough, we absorb the perturbation term into the left-hand side and get:
\begin{equation}\label{eq:apriori_bound}
    \Vert V \Vert_{W^{2,(p,1)}(B^n)} \leq 2 c_0 \Vert L_1 V \Vert_Y.
\end{equation}

If $L_1 V = 0$, estimate \eqref{eq:apriori_bound} immediately implies $\Vert V \Vert_{W^{2,(p,1)}} = 0$, so $V = 0$. Thus, $\ker(L_1) = \{0\}$.

It remains to show that $L_1$ is surjective and hence an isomorphism. Recall that $L_0 \colon X \to Y$ is a classical elliptic operator defining an isomorphism on $X$ due to the mean-zero assumption. The family $(L_t)_{t \in [0,1]}$, where
\begin{equation}
 t \mapsto  L_t := L_0 + t \,  K,  \qquad \text{ and } \;  K(V) = \big( \ast (d\xi_0 \wedge dV)- \ast (dV \wedge d\xi_0) , \; 0 \big)
\end{equation}
defines a continuous path of bounded operators. For $\varepsilon$ small enough, $L_t$ remains Fredholm for all $t \in [0,1]$. By the invariance of the Fredholm index under continuous perturbations, we obtain
\begin{equation}
    \mathrm{ind}(L_1) = \mathrm{ind}(L_0) = 0.
\end{equation}
By the definition of the index and since $\ker(L_1) = \{0\}$, it follows that $L_1$ is surjective.
\vspace{0.3cm}

Applying the Implicit Function Theorem gives a $\delta > 0$ and an open neighborhood $\mathcal{O}$ of $0$ in $X$ such that for every $\lambda \in W^{1,(p,1)}(B^n, \mathfrak{so}(m) \otimes \wedge^1)$ satisfying $\|\lambda\|_{W^{1,(p,1)}(B^n)} < \delta$, there exists a unique $U_\lambda \in \mathcal{O}$ with $\mathcal{N}(\lambda, U_\lambda) = 0$ and $\int_{B^n} U_\lambda \, dx = 0$.

\vspace{0.2cm}

Since the map $\lambda \mapsto U_\lambda$ is $C^1$-continuous with $U_0 = 0$, we have $\|U_\lambda\|_{W^{2,(p,1)}} \to 0$ as $\|\lambda\|_{W^{1,(p,1)}} \to 0$. Because $W^{2,(p,1)}(B^n)$ is a Banach algebra under matrix multiplication, setting $Q_\lambda := e^{U_\lambda}$ yields
\begin{align}
    \|d(Q_\lambda - \mathrm{Id}_m)\|_{W^{1,(p,1)}} \le C \|dU_\lambda\|_{W^{1,(p,1)}} < \alpha, \label{eq:small-identity-alpha}
\end{align}
for any given $\alpha > 0$, assuming that $\delta$ is chosen sufficiently small.

\vspace{0.3cm}
Since $(P_0, \xi_0)$ solves \eqref{eq:gauge-system-better} for $\Omega_0$, we define  $P_\lambda := P_0 Q_\lambda$ and obtain:
\begin{equation}
    \begin{aligned}
        \ast d\xi_\lambda &= Q_\lambda^{-1}dQ_\lambda + Q_{\lambda}^{-1} \big(P_0^{-1} dP_0 + P_0^{-1}\Omega_0 P_0 + \lambda \big) Q_{\lambda } \\
        &= (P_0 Q_\lambda)^{-1} d(P_0 Q_\lambda) + (P_0 Q_\lambda)^{-1} (\Omega_0 + P_0 \lambda P_0^{-1} ) (P_0 Q_\lambda) \\
        &= P_\lambda^{-1} dP_\lambda + P_\lambda^{-1} (\Omega_0 + \omega) P_\lambda,
    \end{aligned}
\end{equation}
where $\omega \coloneqq P_0 \lambda P_0^{-1}$. Thus, $(P_\lambda, \xi_\lambda)$ solves \eqref{eq:gauge-system-better} for $\Omega := \Omega_0 + \omega$, with
\begin{equation}
    \|\omega\|_{W^{1,(p,1)}} \le C \|\lambda\|_{W^{1,(p,1)}} < \delta. \label{eq:omega-delta}
\end{equation}

Moreover, standard elliptic estimates applied to
\[
    \ast d(\xi_\lambda - \xi_0) = Q_\lambda^{-1} dQ_\lambda + Q_\lambda^{-1}\big[ \ast d\xi_0, Q_\lambda - \mathrm{Id}_m \big] + Q_\lambda^{-1} \lambda Q_\lambda
\]
along with $d^\ast(\xi_\lambda - \xi_0) = 0$ and $\xi_\lambda - \xi_0 = 0$ on $\partial B^n$  using \eqref{eq:small-identity-alpha} and  \eqref{eq:omega-delta} yields that:
\begin{align}
    \|d(\xi_\lambda - \xi_0)\|_{W^{1,(p,1)}} \lesssim\| d(\xi_\lambda - \xi_0)\|_{W^{1,(p,1)}} \lesssim \alpha + \delta =: \beta. \label{eq:betalefttoshow}
\end{align}

Note, that \eqref{eq:small-identity-alpha} immediately implies
\begin{equation}
    \begin{aligned}
        \|d(P_\lambda - P_0)\|_{W^{1,(p,1)}} \le C \|d(Q_\lambda - \mathrm{Id}_m)\|_{W^{1,(p,1)}} < \alpha. \label{eq:alpha-left-to-show}
    \end{aligned}
\end{equation}

Choosing $\varepsilon, \alpha, \beta > 0$ sufficiently small such that $C \varepsilon + \alpha + \beta \le \delta$, the triangle inequality yields
\begin{equation}
    \begin{aligned}
        \|dP_\lambda\|_{L^{(n,q)}} + \|d\xi_\lambda\|_{L^{(n,q)}} 
        &\le \|dP_0\|_{L^{(n,q)}} + \|d\xi_0\|_{L^{(n,q)}} \\
        &\hspace{1cm}+ \|d(P_\lambda - P_0)\|_{L^{(n,q)}} + \|d(\xi_\lambda - \xi_0)\|_{L^{(n,q)}} \\
        &\le C\varepsilon + \alpha + \beta \le \delta.
    \end{aligned}
\end{equation}
Therefore, $(P_\lambda, \xi_\lambda)$ satisfies the assumption of Lemma \ref{lm:lemma-to-conclude}, which implies the a-priori bounds \eqref{eq:apriori-bound-better-lemma1} and \eqref{eq:apriori-bound-better-lemma2} for $(P_\lambda, \xi_\lambda)$. 

This proves $\Omega_0 + \omega \in \mathcal{U}_{\varepsilon,C}$, and hence that $\mathcal{U}_{\varepsilon,C}$ is open in $\mathcal{V}_\varepsilon$.

\textbf{Conclusion.}
Step  1 -- Step 4 implies $\mathcal{U}_{\varepsilon,C} = \mathcal{V}_\varepsilon$ and this finishes the proof of Lemma \ref{lm:better-regularity-space}.

\end{proof}

It remains to prove the following auxiliary Lemma.

\begin{lemma} \label{lm:lemma-to-conclude}
There exist $C(n) > 0$, $\varepsilon>0$ and $\delta > 0$ such that for every $P \in W^{2,(p,1)}(B^n, \mathrm{SO}(m))$ and $\xi \in W^{2,(p,1)}(B^n, \mathfrak{so}(m)\otimes \wedge^{n-2})$ satisfying \eqref{eq:gauge-system-better} for some $\Omega \in W^{1,(p,1)}(B^n, \mathfrak{so}(m)\otimes \wedge^1)$ with $\Vert \Omega \Vert_{\lorentzl}<\varepsilon$, if
\[
\|dP\|_{L^{(n,q)}} + \|d\xi\|_{L^{(n,q)}} \le \delta,
\]
then \eqref{eq:apriori-bound-better-lemma1} and \eqref{eq:apriori-bound-better-lemma2} are satisfied.
\end{lemma}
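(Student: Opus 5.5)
The plan is to derive an elliptic system for $(P,\xi)$ from the gauge equation and then absorb the nonlinear terms using the smallness assumption $\|dP\|_{L^{(n,q)}}+\|d\xi\|_{L^{(n,q)}}\le\delta$. This is the standard Uhlenbeck a priori estimate.

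\textbf{Step 1: the elliptic system.} Applying $d^\ast$ to the first line of \eqref{eq:gauge-system-better}, and using $d^\ast(\ast d\xi)=\pm\ast dd\xi=0$, gives
\begin{equation*}
d^\ast(P^{-1}dP)=-d^\ast(P^{-1}\Omega P),\qquad\text{so}\qquad \Delta P = dP\cdot P^{-1}dP - d^\ast(\Omega P)+\text{lower order}.
\end{equation*}
More conveniently, I would write $dP = P\ast d\xi - \Omega P$, which yields
\begin{equation*}
\Delta P = \pm\ast(dP\wedge d\xi) + d^\ast(\Omega P).
\end{equation*}
Applying $d$ to $\ast d\xi = P^{-1}dP + P^{-1}\Omega P$, and using $d\ast\xi=0$ together with $\xi=0$ on $\partial B^n$, gives
\begin{equation*}
\Delta \xi = \pm\ast\big(dP^{-1}\wedge dP\big) \pm \ast d(P^{-1}\Omega P)
\end{equation*}
with Dirichlet data.

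\textbf{Step 2: the $L^{(n,q)}$ bound \eqref{eq:apriori-bound-better-lemma2}.} Lorentz Calderón--Zygmund estimates, i.e.\ the boundedness of $\nabla\Delta^{-1}d^\ast$ and $\nabla\Delta^{-1}d$ on $L^{(n,q)}$, give $\|d(P^{-1}\Omega P)\text{-part}\|\lesssim\|\Omega\|_{L^{(n,q)}}$ since $|P|=1$. The quadratic terms $dP\wedge dP$ and $dP\wedge d\xi$ lie in $L^{(n/2,q/2)}$, and I would estimate them through the Sobolev embedding $W^{1,(n/2,\cdot)}\hookrightarrow L^{(n,\cdot)}$. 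Here the Lorentz second index $q/2\le 1$ is fine since Hölder gives $L^{(n,q)}\cdot L^{(n,q)}\subset L^{(n/2,q/2)}\subset L^{(n/2,q)}$, and the Riesz potential maps $L^{(n/2,q)}\to L^{(n,q)}$. The result is
\begin{equation*}
\|dP\|+\|d\xi\|\le C\|\Omega\|_{L^{(n,q)}} + C\delta(\|dP\|+\|d\xi\|).
\end{equation*}
Choosing $\delta<1/(2C)$ absorbs the last term. For $P$ there is no boundary condition, so I would instead apply the estimate to $d^\ast(P^{-1}dP)$ together with $d(P^{-1}dP)=-P^{-1}dP\wedge P^{-1}dP$, i.e.\ a Hodge decomposition of the $1$-form $P^{-1}dP$ with the normal component handled via $\ast d\xi$ and $\Omega$ on the boundary. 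This boundary issue is the step I expect to be most delicate. I would try first to use the identity $P^{-1}dP=\ast d\xi - P^{-1}\Omega P$ directly: it gives $\|dP\|_{L^{(n,q)}}\le\|d\xi\|_{L^{(n,q)}}+\|\Omega\|_{L^{(n,q)}}$, so only the $\xi$-estimate requires elliptic theory, and that one has clean Dirichlet data.

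\textbf{Step 3: the $W^{1,(p,1)}$ bound \eqref{eq:apriori-bound-better-lemma1}.} Differentiate and use the same equations at the level $L^{(p,1)}$ for second derivatives. Consider the $\xi$-equation:
\begin{equation*}
\|\nabla^2\xi\|_{L^{(p,1)}}\lesssim\|dP\wedge dP\|_{L^{(p,1)}}+\|\nabla(P^{-1}\Omega P)\|_{L^{(p,1)}}.
\end{equation*}
By \eqref{eq:hölder-inequality-for-p} this is at most $C\|dP\|_{L^{(n,q)}}\|dP\|_{W^{1,(p,1)}} + C\|\Omega\|_{W^{1,(p,1)}} + C\|dP\|_{L^{(n,q)}}\|\Omega\|_{W^{1,(p,1)}}$. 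Then $\nabla dP = \nabla(P\ast d\xi-\Omega P)$ gives $\|dP\|_{W^{1,(p,1)}}\lesssim\|d\xi\|_{W^{1,(p,1)}}+\|\Omega\|_{W^{1,(p,1)}}+\delta(\|dP\|_{W^{1,(p,1)}}+\|\Omega\|_{W^{1,(p,1)}})$, again using \eqref{eq:hölder-inequality-for-p} for the products $dP\cdot d\xi$ and $dP\cdot\Omega$. Smallness of $\delta$ absorbs all the quadratic terms, which yields \eqref{eq:apriori-bound-better-lemma1}. The finiteness of the norms being absorbed is guaranteed by the assumption $P,\xi\in W^{2,(p,1)}$.
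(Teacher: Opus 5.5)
Your proposal is correct and, once you drop the elliptic equation for $P$ (whose missing boundary data you rightly flag) in favour of the identity $P^{-1}dP=\ast d\xi-P^{-1}\Omega P$, it is essentially the paper's argument. The elliptic estimates are Lorentz Calder\'on--Zygmund bounds for the Dirichlet problem for $\xi$, at both the $L^{(n,q)}$ and $W^{1,(p,1)}$ levels, with the quadratic term $\ast(dP^{-1}\wedge dP)$ absorbed via $\delta$ and the term $\ast d(P^{-1}\Omega P)$ bounded by $\Omega$; the paper just splits $\xi=v+w$ into these two Dirichlet problems and uses $L^{(n,q)}\cdot L^{(n,q')}\subset L^{(n/2,1)}$ where you use the equivalent (for $q\le 2$) inclusion $L^{(n,q)}\cdot L^{(n,q)}\subset L^{(n/2,q/2)}$.
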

\begin{proof}

We decompose $\xi = v + w$, where $v, w \in W^{2,(p,1)}(B^n, \wedge^{n-2})$ satisfy:
\begin{equation}\label{eq:u_system}
\begin{cases}
    \Delta v = \ast(dP^{-1} \wedge dP) & \text{in } B^n, \\
    v = 0 & \text{on } \partial B^n,
\end{cases}
\qquad
\begin{cases}
    \Delta w = \ast d(P^{-1}\Omega P) & \text{in } B^n, \\
    w = 0 & \text{on } \partial B^n.
\end{cases}
\end{equation}

\vspace*{0.5cm}

\textbf{Estimates containing $v$:}

\textit{$L^{(n,q)}$ Bound for $dv$:}
In order to exploit the div-curl structure, we use that $dv\in W^{1,(p,1)}\hookrightarrow L^{(n,1)} \hookrightarrow L^{(n,q)}$ (for $1\leq q\leq 2$) and hence
\begin{equation}
    \begin{aligned}
       \Vert dv \Vert_{L^{(n,q)}(B^n)} &\lesssim     \Vert dv \Vert_{L^{(n,1)}(B^n)} \lesssim \Vert \Delta v \Vert_{L^{(\frac{n}{2},1)}(B^n)} \lesssim \Vert dP^{-1} \Vert_{L^{(n,q)}} \Vert dP \Vert_{L^{(n,q')}} \\
       & \overset{L^{(n,q)} \subset L^{(n,q')}}{\leq} C \delta \Vert dP \Vert_{L^{(n,q)}(B^n)}.
    \end{aligned} \label{eq:lorentz-1-for-v1}
\end{equation}

\textit{$W^{1,(p,1)}$ Bound for $dv$:} 
Using Calderón-Zygmund estimates together with the Lorentz-Hölder inequality and the Sobolev embedding $W^{1,(p,1)}(B^n) \hookrightarrow L^{(\frac{np}{n-p},1)}(B^n) \hookrightarrow L^{(\frac{np}{n-p},q')}(B^n)$, we get
\begin{equation}
\begin{aligned}
\Vert dv \Vert_{W^{1,(p,1)}}
&\lesssim \Vert \Delta v\Vert_{L^{(p,1)}} \lesssim \Vert dP \Vert_{L^{(n,q)}} \Vert dP \Vert_{L^{(\frac{np}{n-p},q')}} \lesssim \delta \Vert dP \Vert_{W^{1,(p,1)}}.
\end{aligned}  \label{eq:lorentz-2-for-v1}
\end{equation}

\vspace*{0.5cm}

\textbf{Estimates containing $w$:}

\textit{$L^{(n,q)}$ Bound for $dw$:} Again by standard elliptic estimates  and $|P| \le 1$ we have
\begin{equation}
    \|d w\|_{L^{(n,q)}} \le C \|P^{-1}\Omega P\|_{L^{(n,q)}} \le C \|\Omega\|_{L^{(n,q)}}. \label{eq:lorentz-1-for-w1}
\end{equation}

\textit{$W^{1,(p,1)}$ Bound for $dw$:} Applying the product rule yields
\begin{equation}
    d(P^{-1}\Omega P) = dP^{-1}\Omega P + P^{-1}(d\Omega)P + P^{-1}\Omega dP.
\end{equation}
Using $d\Omega \in L^{(p,1)}$ together with $W^{1,(p,1)} \hookrightarrow L^{(\frac{np}{n-p},q')}$:
\begin{equation} \begin{aligned}
    \|d(P^{-1}\Omega P)\|_{L^{(p,1)}} &\le C \Big( \|d\Omega\|_{L^{(p,1)}} + \|dP\|_{L^{(n,q)}} \|\Omega\|_{L^{(\frac{np}{n-p},q')}} \Big)  \\
    &\le C \|\Omega\|_{W^{1,(p,1)}} \Big( 1 +   \|dP\|_{L^{(n,q)}} \Big).
\end{aligned} \end{equation} 
Again, by standard elliptic estimates we obtain:
\begin{equation}
    \|dw\|_{W^{1,(p,1)}} \le C \|\Delta w\|_{L^{(p,1)}} \leq C \|\Omega\|_{W^{1,(p,1)}} \Big( 1 +   \|P\|_{W^{1,(p,1)}} \Big).\label{eq:lorentz-2-for-w1}
\end{equation}

\vspace*{0.5em}
\textbf{Conclusion for $\xi$:}

Note that by \eqref{eq:gauge-system-better}, we have the expression 
\begin{align}
    \ast d\xi =dP^\top P+ P^\top \Omega P \qquad \Leftrightarrow \qquad dP^\top= \ast d\xi P^\top - P^\top \Omega . \label{eq:gauge-formula-rewritten}
\end{align}
This leads to 
\begin{equation}
\begin{aligned}
 \Vert dP \Vert_{\lorentzl} &\lesssim \Vert d\xi \Vert_{\lorentzl} + \Vert \Omega \Vert_{\lorentzl}.  \\
    \Vert dP \Vert_{W^{1,(p,1)}} &\lesssim  \Vert d\xi \Vert_{W^{1,(p,1)}} +  \Vert \Omega \Vert_{W^{1,(p,1)}}. \label{eq:estimate-for-p-by-defintion11}
 \end{aligned}
\end{equation}
So once we established the bounds \eqref{eq:apriori-bound-better-lemma1} and \eqref{eq:apriori-bound-better-lemma2} for $d\xi$, we can conclude the proof by using \eqref{eq:estimate-for-xi}. 

\vspace{0.5cm}
By combining \eqref{eq:lorentz-1-for-v1} and \eqref{eq:lorentz-1-for-w1}, we have
\begin{equation}
    \begin{aligned}
        \Vert d\xi \Vert_{L^{(n,q)}} &\leq   \Vert dv \Vert_{L^{(n,q)}} +   \Vert dw \Vert_{L^{(n,q)}} \lesssim  \delta \Vert dP \Vert_{\lorentzl} + \Vert \Omega \Vert_{\lorentzl} \\
       & \overset{\eqref{eq:estimate-for-p-by-defintion11}}{\leq} C_1 \delta \Vert d\xi \Vert_{\lorentzl} + C_2(1+\delta) \Vert \Omega \Vert_{\lorentzl}.
    \end{aligned}
\end{equation}
Choosing $\delta$ sufficiently small such that $C_1\delta <\frac{1}{2}$,  absorbing the term containing $d\xi$ to the LHS, we get \eqref{eq:apriori-bound-better-lemma2}.

In order to show \eqref{eq:apriori-bound-better-lemma1}, combining \eqref{eq:lorentz-2-for-v1} and \eqref{eq:lorentz-2-for-w1} yields 
\begin{equation}
    \begin{aligned}
        \Vert d\xi \Vert_{W^{1,(p,1)}} &\leq   \Vert dv \Vert_{W^{1,(p,1)}} +   \Vert dw \Vert_{W^{1,(p,1)}} \lesssim \delta \Vert dP \Vert_{W^{1,(p,1)}} + \Vert \Omega \Vert_{W^{1,(p,1)}} \\
        & \overset{\eqref{eq:estimate-for-p-by-defintion11}}{\leq} C_3 \delta \Vert d\xi\Vert_{W^{1,(p,1)}} + C_4\Vert \Omega \Vert_{W^{1,(p,1)}}. \label{eq:estimate-for-xi}
    \end{aligned}
\end{equation}
Again, choose  $\delta $ small enough, such that also $C_3 \delta <\frac{1}{2}$ holds, we can conclude \eqref{eq:apriori-bound-better-lemma1}.

\end{proof}

\begin{lemma} \label{lm:lemma-of-harmonic}
Let  $1 \le p < +\infty$, and let $B^n \subset \R^n$.
Then, for any function $f \in L^p(B^n)$ which is a solution of
\[
-\Delta f = 0 \quad \text{on } B^n,
\]
the function
\[
r \mapsto \frac{1}{r^n} \int\limits_{B_r} |f|^p \, dx^n,
\]
is increasing.
\end{lemma}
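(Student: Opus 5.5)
The plan is to show that $|f|^p$ is subharmonic on $B^n$ and then to get the monotonicity from the sub-mean-value property on spheres, integrated in the radial variable.

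\textbf{Step 1 (regularity).} By Weyl's lemma, a distributional solution $f\in L^p(B^n)\subset L^1_{\mathrm{loc}}$ of $-\Delta f=0$ agrees a.e. with a smooth harmonic function, so we may take $f$ smooth. (For harmonic $1$-forms, as used in the proof of Theorem \ref{thm:main-thm}, one applies this componentwise in Euclidean coordinates, since $\Delta$ acts diagonally there.)

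\textbf{Step 2 (subharmonicity of $|f|^p$).} For $p\ge 1$ the function $t\mapsto |t|^p$ on $\R^N$ is convex. The composition of a convex function with a harmonic (vector-valued) map is subharmonic, by Jensen's inequality applied to the mean-value formula:
\begin{equation*}
|f(x_0)|^p=\Big|\fint_{\partial B_s(x_0)} f\Big|^p\le \fint_{\partial B_s(x_0)}|f|^p .
\end{equation*}
This avoids having to compute $\Delta|f|^p$ near the zero set of $f$ when $p<2$. So $\varphi:=|f|^p$ is continuous and satisfies the sub-mean-value property on every sphere, and hence is subharmonic.

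\textbf{Step 3 (monotonicity).} For a continuous subharmonic $\varphi$ on $B^n$, the spherical mean
\begin{equation*}
m(s):=\fint_{\partial B_s}\varphi
\end{equation*}
is nondecreasing in $s$. I would prove this by taking the harmonic extension $h$ of $\varphi|_{\partial B_s}$ into $B_s$; the maximum principle gives $\varphi\le h$ on $B_s$. For $s'<s$ this yields $m(s')\le \fint_{\partial B_{s'}}h = h(0)=m(s)$.

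Writing $|B_1|=\omega_n$, polar coordinates give
\begin{equation*}
\frac{1}{r^n}\int_{B_r}\varphi=\frac{n\omega_n}{r^n}\int_0^r s^{n-1}m(s)\,ds = n\omega_n\int_0^1 \tau^{n-1}m(r\tau)\,d\tau .
\end{equation*}
The right-hand side is nondecreasing in $r$ because the integrand is, for each fixed $\tau$. This proves the claim.

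The only delicate point is Step 2 for $1\le p<2$, where $|f|^p$ is not $C^2$. The Jensen route above handles this cleanly. The alternative would be to regularize with $(|f|^2+\eta)^{p/2}$, compute its Laplacian to be nonnegative, and let $\eta\to0$.
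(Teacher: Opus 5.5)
Your proof is correct. It shares the paper's overall skeleton (subharmonicity of $|f|^p$, then monotone spherical means, then radial integration), but you carry out each step with a different tool.

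The paper does the following:
\begin{itemize}
\item it computes $\Delta|f|^p=p(p-1)|f|^{p-2}|\nabla f|^2\ge0$ pointwise;
\item it integrates over $B_r$ with the divergence theorem to get $\frac{d}{dr}\int_{\partial B^n}|f(r\theta)|^p\,d\mu(\theta)\ge0$;
\item it turns this into the differential inequality $F(r)\le\frac{r}{n}F'(r)$ for $F(r)=\int_{B_r}|f|^p$, i.e.\ $(F/r^n)'\ge0$.
\end{itemize}
You replace these, in order, by Jensen's inequality on the mean-value formula, by a harmonic majorant plus the maximum principle, and by the rescaling $s=r\tau$, after which monotonicity can be read off without differentiating.

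The paper's route is the short textbook computation. It is directly justified when $|f|^p\in C^2$, for instance scalar $f$ with $p\ge2$.

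Your route is more robust exactly where the lemma is used. In the proof of Theorem~\ref{thm:main-thm} it is applied to a harmonic $1$-form $\mathfrak{H}$ with $1<p<\frac{n}{n-1}<2$, and in the second approach with $p=\frac{n}{n-1}$. In that regime two things go wrong for the paper's computation:
\begin{itemize}
\item $|f|^p$ fails to be $C^2$ on $\{f=0\}$;
\item for vector-valued $f$ the correct formula off the zero set is $\Delta|f|^p=p|f|^{p-2}\bigl(|\nabla f|^2+(p-2)|\nabla|f||^2\bigr)$, not the scalar one. It is nonnegative only after using $|\nabla|f||\le|\nabla f|$.
\end{itemize}
So the paper's argument tacitly needs something like the regularization $(|f|^2+\eta)^{p/2}$ that you mention as an alternative. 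Your Jensen step needs only continuity of $f$ and convexity of $|\cdot|^p$, and it covers the vector-valued case at no extra cost.
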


\begin{proof}
We follow the proof in  \cite[Lemma 3.3.12 ]{movingframes}.

Setting $v = |f|^p$, we first compute the Laplacian of $v$. Since $f$ is harmonic ($\Delta f = 0$), a direct calculation gives
\begin{align}
    \operatorname{div}(\nabla v) = \operatorname{div}\big(p |f|^{p-2} f \nabla f\big) = \underbrace{p(p-1)|f|^{p-2}}_{\geq 0} \cdot \underbrace{\vert \nabla f \vert^2}_{\geq 0} + p |f|^{p-2} f \underbrace{\Delta f}_{=0} \geq 0.
\end{align}
Therefore, $\Delta v \geq 0$, so $v = |f|^p$ is subharmonic. 

Integrating this inequality over the ball $B_r \subset \mathbb{R}^n$ and applying Stokes' theorem, we obtain
\begin{align}
    0 \leq \int\limits_{B_r} \Delta v \, dx = \int\limits_{\partial B_r} \frac{\partial v}{\partial r}(z) \; d\mu(z) = r^{n-1} \frac{d}{dr} \int\limits_{\partial B^n} v(r\theta) \; d\mu(\theta), \label{eq: non-decreasing}
\end{align}
where we parametrized the boundary by $z = r\theta$ for $\theta \in \partial B^n$ and used the fact that the surface measure scales as $d\mu(z) = r^{n-1} d\mu(\theta)$. 

Equation \eqref{eq: non-decreasing} implies that 
\begin{align}
    r \mapsto \int\limits_{\partial B^n} |f(r\theta)|^p \; d\mu(\theta) \label{eq: gain-1}
\end{align} 
is non-decreasing. It follows that
\begin{align}
\int\limits_{B_r} |f(z)|^p\, dz
&= \int\limits_0^r \rho^{n-1} \, d\rho
  \int\limits_{\partial B^n} |f(\rho\theta)|^p\, d\mu(\theta) 
\overset{\eqref{eq: gain-1}}{\le}
\int\limits_0^r \rho^{n-1} \, d\rho
  \int\limits_{\partial B^n} |f(r\theta)|^p\, d\mu(\theta) \\
&= \frac{r^n}{n} 
  \int\limits_{\partial B^n} |f(r\theta)|^p\, d\mu(\theta) 
= \frac{r^n}{n} \frac{1}{r^{n-1}} \int\limits_{\partial B_r} |f(x)|^p \, d\mu(x) \\
&= \frac{r}{n} \frac{d}{dr}
  \int\limits_{B_r} |f(x)|^p\, dx. \label{eq: ode-part1}
\end{align}
Defining $\displaystyle F(r):= \int\limits_{B_r} |f(x)|^p \; dx$, equation \eqref{eq: ode-part1} can be rewritten as
\begin{align}
    F(r) \leq \frac{r}{n}F'(r) \quad \Leftrightarrow \quad 0 \leq \frac{F'(r)}{r^n}- \frac{nF(r)}{r^{n+1}}= \frac{d}{dr} \Big( \frac{F(r)}{r^n} \Big).
\end{align}
Integrating this differential inequality, we conclude that
\[
r \mapsto \frac{1}{r^n} \int\limits_{B_r} |f(x)|^p\, dx
\]
is non-decreasing.
\end{proof}

\section*{Acknowledgements}

The author would like to thank Francesca Da Lio and Tristan Rivière for their enriching discussions, ideas and feedback on previous versions of the paper and continued support. 
\vspace{0.2cm}

The author would like to thank Dorian Martino for his advice and discussion on the AI generated Section \ref{sec:counterexample}.
\vspace{0.5cm}

\footnotesize
\begin{description}
    \item[On the Use of AI] All Sections, except Section \ref{sec:counterexample} do not contain any proofs, strategies or ideas generated by AI. Upon completion of the paper, the author used ChatGPT-5.6 Sol Pro to show the optimality in Section \ref{sec:counterexample}. This Section was only verified by the author. The author used Gemini to find literature and typos.
\end{description}

\normalsize